\documentclass{article}
%^g4Dt-sT%*PkHXJ
\usepackage{arxiv}

\usepackage[normalem]{ulem} %to strike the words

\usepackage[utf8]{inputenc} % allow utf-8 input
\usepackage[T1]{fontenc}    % use 8-bit T1 fonts
\usepackage{hyperref}       % hyperlinks
\usepackage{url}            % simple URL typesetting
\usepackage{booktabs}       % professional-quality tables
\usepackage{amsfonts}       % blackboard math symbols
\usepackage{microtype}      % microtypography
\usepackage{color}
\usepackage{lipsum}
\usepackage{amsthm}
\usepackage{indentfirst}
\usepackage{graphicx}
\usepackage{epsfig,epsf,latexsym,subfigure}
\usepackage{float}
\usepackage{epstopdf}
\usepackage{fourier}
\usepackage{bm}
\usepackage{bbm} % for Indicator function
\usepackage{mathtools}
\usepackage{latexsym,enumerate}
\usepackage{tabularx}
\usepackage{capt-of}
\usepackage{cases}
\usepackage{enumitem} 
\usepackage{algorithm}
\usepackage{algpseudocode}
\usepackage{fourier}

% add these two lines to your long preamble    
\DeclareMathAlphabet{\mathcal}{OMS}{cmsy}{m}{n}
\SetMathAlphabet{\mathcal}{bold}{OMS}{cmsy}{b}{n}
%%%%%
%\usepackage{showlabels}
%%%%%

%\definecolor{brown}{rgb}{0.62, 0.21, 0.44}

\newcommand{\comment}[1]{}

\newcommand{\BEA}{\begin{eqnarray}}
\newcommand{\EEA}{\end{eqnarray}}

\newcommand{\BR}{\mathbb{R}}

 % Changes "Require" to "Input:"
 % Optionally changes "Ensure" to "Output:"

\newtheorem{theo}{Theorem}[section]
\newtheorem{prop}{Proposition}[section]

\newtheorem{scheme}{Scheme}[section]

\title{Geometric local parameterization for solving Hele-Shaw problems with surface tension}

\author{
  Zengyan Zhang and Wenrui Hao \\
  Department of Mathematics \\
  The Pennsylvania State University, University Park, PA 16802, USA\\
  \texttt{zzz5527@psu.edu} and \texttt{wxh64@psu.edu} \\
  %% examples of more authors
  \And
 John Harlim \\
  Department of Mathematics, \\ Institute for Computational and Data Sciences \\
  The Pennsylvania State University, University Park, PA 16802, USA\\
  \texttt{jharlim@psu.edu} \\
}

\begin{document}

\maketitle

\begin{abstract}
In this work, we introduce a novel computational framework for solving the two-dimensional Hele-Shaw free boundary problem with surface tension. The moving boundary is represented by point clouds, eliminating the need for a global parameterization. Our approach leverages Generalized Moving Least Squares (GMLS) to construct local geometric charts, enabling high-order approximations of geometric quantities such as curvature directly from the point cloud data. This local parameterization is systematically employed to discretize the governing boundary integral equation, including an analytical formula of the singular integrals. We provide a rigorous convergence analysis for the proposed spatial discretization, establishing consistency and stability under certain conditions. The resulting error bound is derived in terms of the size of the uniformly sampled point cloud data on the moving boundary, the smoothness of the boundary, and the order of the numerical quadrature rule. Numerical experiments confirm the theoretical findings, demonstrating high-order spatial convergence and the expected temporal convergence rates. {\color{black}The method's effectiveness is further illustrated through simulations of complex initial shapes, including interfaces driven by anisotropic surface tension, which correctly evolve towards circular equilibrium states under the influence of surface tension, highlighting the versatility of the method for complex geometry-dependent interface dynamics.}
\end{abstract}

\paragraph{Keywords.} Hele-Shaw problem, generalized moving least squares, boundary integral method, singular integral equation

\paragraph{MSCcodes.}  65M12 35R35 45E05

\section{Introduction}
Many natural phenomena in biology, physics, and materials science are governed by systems of partial differential equations (PDEs) with free (moving) boundaries \cite{F1,FH,HCF,HF,HHHS}, including plaque growth in cardiovascular disease \cite{FH,HF}, tumor growth in cancer \cite{cristini2003nonlinear,HHHS,macklin2006improved}, solidification and melting in materials science \cite{alexiades2018mathematical,gupta2017classical}, and moving fluid interfaces in physics \cite{mohammad2022review,vogel1996life}. Among these, the Hele-Shaw problem with surface tension stands out as a widely studied model due to its broad applications across both physical and biological systems \cite{F1,friedman2012variational}. Since the pioneering experiments of Hele-Shaw, who confined a fluid between two closely spaced plates \cite{HSH}, and the seminal work of Saffman and Taylor in 1958 \cite{ST}, this problem has attracted extensive theoretical and experimental attention.

From a mathematical perspective, the Hele-Shaw problem can be studied both analytically and numerically, with an emphasis on understanding solution structures and their dynamics \cite{chen2003free,dibenedetto1984ill}. Over the past few decades, extensions of the Hele-Shaw model incorporating surface tension have arisen from physical and biological applications \cite{constantin1993global,F1}, motivating the development of theoretical frameworks and nonlinear simulation techniques to investigate steady-state solutions. While PDE theory provides valuable insights in certain special cases, in-depth studies often rely on large-scale numerical simulations to compute steady states, trace bifurcations, and analyze stability \cite{HHHMS,HHHS,HHHLSZ}. For example, bootstrapping methods combined with multi-grid and domain decomposition techniques, along with homotopy approaches, have been used to compute multiple steady-state solutions of the generalized Hele-Shaw problem \cite{HHHMS,HHHS,HHS1}. Moreover, adaptive homotopy tracking methods enable the detection of bifurcation points and exploration of global solution structures guided by PDE theory \cite{HHHLSZ,HHHS}. These methods have also been successfully applied to complex biological systems, such as tumor growth models and the evaluation of cardiovascular disease risk \cite{HCF,HF}.

Despite these advances, developing efficient numerical algorithms for complex PDE systems with free boundaries remains a central challenge. For example, rigorous convergence and error analyses have been established for boundary integral methods applied to simplified Hele–Shaw problems {\color{black} without accounting for surface tension \cite{HHLS}}. Specifically, the scheme attains first-order accuracy in time ($L^\infty$ norm) and $\Delta \theta^\varrho$ ($\varrho<1$) in space, relying on a global parameterization in the $\theta$ direction, which limits its ability to handle complex geometries. Furthermore, singular integration in the boundary integral is not addressed in their analysis.  {\color{black} These difficulties are further amplified when the geometry undergoes complex topological changes, such as in interfaces driven by anisotropic surface tension, where the interface may evolve from a multi-fold configuration. Classical approaches, including front-tracking  and level set methods, have been successfully used to track such evolving geometries. However, they often rely on global parameterizations or require frequent remeshing to preserve numerical stability, and accurate computation of curvature can become challenging in the presence of strong anisotropy. These considerations motivate the development of numerical methods based on local geometric representations that can flexibly adapt to complex interface geometries.} 

%Despite significant progress, there is a pressing need for efficient numerical methods accompanied by rigorous theoretical analysis to enable a deeper understanding of generalized Hele-Shaw problems with surface tension induced by complex geometry.

In this paper, we introduce an efficient numerical method for approximating the solution to Hele-Shaw problems with surface tension on complex geometry, with theoretical guarantees. While the formulation can be extended to arbitrary dimensions, we focus on two-dimensional problems with a one-dimensional smooth complex boundary. By ``complex boundary'', we refer mathematically to manifolds that cannot be globally parameterized, i.e., whose atlas has more than one chart. Since the boundary is unknown, that is, it is represented only by a finite number of point cloud data lying on a smooth manifold (or boundary curve), we locally parameterize this manifold with Generalize Moving Least Squares (GMLS), a method with solid theoretical foundations \cite{mirzaei2012generalized}.
Numerically, this local parameterization effectively represents functions on complex geometries \cite{gross2020meshfree,jiang2024generalized,liang2013solving}. Our main technical contribution is a systematic application of this local parameterization to discretize the singular boundary integral equation while respecting the underlying manifold. We further support the method with a theoretical study demonstrating the convergence of the scheme in the limit of a large number of uniformly sampled point cloud data.

The remainder of this paper is organized as follows. In Section~\ref{sec2}, we give a brief overview of the Hele-Shaw problem with surface tension and its boundary integral formulation. Section~\ref{sec3} presents an overview of the Generalized Moving Least Squares (GMLS) for approximating boundary curvature, the proposed numerical formulation for solving the free boundary problem, and the corresponding theoretical analysis. In Section~\ref{sec4},
we rigorously analyze the convergence and error of the proposed discretization. In Section~\ref{sec5},
we demonstrate the effectiveness of the proposed method on several test problems, ranging from simple to complex initial boundaries. We conclude the paper with a summary and discussion in Section~\ref{sec6}. We accompany this paper with appendices that reported some detailed calculaions of the proposed discretization, additional proofs of some propositions, and additional numerical results. %An Appendix is included to provide the detailed proof of Proposition~\ref{prop4.3}.

\section{Overview of the Hele–Shaw problem with surface tension and its boundary integral formulation}\label{sec2}

Consider a tissue or material that behaves as a porous medium. The velocity field $\mathbf V$ is described by Darcy's law, 
$\mathbf{V} = -\frac{\sigma}{\mu} \nabla p,$
where $p$ denotes the pressure, $\sigma$ is the permeability of the medium, and $\mu$ is the dynamic viscosity of the fluid. Combining with the conservation of mass, $\nabla \cdot \mathbf V=f$, where $f:\Omega(t) \to \BR$ represents distributed sources or sinks, leads to the following PDE system with a free boundary,
\begin{equation}\label{hele-shaw}
\left\{
\begin{array}{rcll}
-\frac{\sigma}{\mu}\Delta p &=& f & \text{in }\, \Omega(t), \\
p &=& \tau\kappa & \text{on }\, \Gamma(t), \\
\frac{\sigma}{\mu}\frac{\partial p}{\partial \mathbf n} &=& -V_n & \text{on }\, \Gamma(t).
\end{array}
\right.
\end{equation}
Here, \( \Omega(t) \) denotes the fluid-filled region, and \( \Gamma(t) \) denotes its moving boundary. In this paper, we consider a bounded two-dimensional domain $\Omega(t) \subset \mathbb{R}^2$ with a closed curve boundary $\Gamma(t)$ at any fixed time $t\geq 0$. In \eqref{hele-shaw}, \( \kappa \) denotes the mean curvature (for instance, $\kappa=R(t)^{-1}$ if $\Omega(t)$ is a disk of radius $R(t)$) {\color{black}\cite{hou1997hybrid,li2007rescaling}}.  
The boundary condition $p=\tau\kappa$ models the effect of surface tension: the pressure jump across the interface is proportional to the local curvature, which acts to stabilize and regularize the evolving boundary. In classical Hele–Shaw problems without surface tension, the pressure on the boundary {\color{black} is typically prescribed as a constant \cite{aitchison1985computation} or a given potential function \cite{blank2009hele}.} The inclusion of surface tension distinguishes the present model, as the boundary pressure is not known a priori but instead couples dynamically to the evolving geometry through curvature. This feature introduces additional analytical and numerical challenges and is a central focus of this work.  
For simplicity, we normalize parameters and set $\sigma=\mu$ and $\tau=1$ throughout the remainder of the paper.
We assume the kinematic boundary condition on the free boundary $\Gamma(t)$, which states that the boundary moves in accordance with the velocity $\mathbf V$, such that its component in the direction of the outward normal \(\mathbf n \) is given by
\begin{equation*}
V_n=\mathbf V\cdot\mathbf n=-\frac{\partial p}{\partial \mathbf n} \quad\text{on}~\Gamma(t),
\end{equation*}
with $\mathbf n$ pointing out of $\Omega(t)$. 

%\subsection{Boundary integral {\color{black}equation} \sout{method}}
In this paper, we consider the case $f=0$, which corresponds to the classical Hele-Shaw free boundary problem. This allows us to integrate the governing equations on the free boundary. Denote $G(\mathbf x,\mathbf y)$ as the Green's function for the Laplacian operator, $-\Delta$. Specifically, \( G(\mathbf{x}, \mathbf{y}) = -\frac{1}{2\pi} \ln\|\mathbf{x} - \mathbf{y}\| \) for two-dimensional cases. Then from the Green's third identity \cite{friedman1963generalized,kress1999linear}, we have
\[-\int_\Omega G(\mathbf x,\mathbf y)\Delta p(\mathbf y) dV_{\mathbf y}-p(\mathbf x)=-\int_{\Gamma}\Big(G(\mathbf x,\mathbf y)\frac{\partial p(\mathbf y)}{\partial\mathbf n(\mathbf y)}-p(\mathbf y)\frac{\partial G(\mathbf x,\mathbf y)}{\partial\mathbf n(\mathbf y)}\Big) dS_{\mathbf y}, \quad\forall \mathbf x\in\Omega(t) \]

Setting $\Delta p=0$, incorporating the jump condition and the boundary condition, we arrive at 
%and the identity simplifies to
%\[p(\mathbf x) = \int_\Gamma \Big(G(\mathbf x,\mathbf y)\frac{\partial p(\mathbf y)}{\partial\mathbf n_{\mathbf y}}-p(\mathbf y)\frac{\partial G(\mathbf x,\mathbf y)}{\partial\mathbf n_{\mathbf y}}\Big) dS_{\mathbf y},\quad \mathbf x\in\Omega(t)\]
%Incorporating the pressure jump condition at the free %boundary leads to an integral equation of the form,
%\[\frac{p(\mathbf{x})}{2} = \int_{\Gamma}\Big(G(\mathbf{x},\mathbf{y})\frac{\partial p(\mathbf{y})}{\partial \mathbf{n}_{\mathbf y}} - p(\mathbf{y})\frac{\partial G(\mathbf{x},\mathbf{y})}{\partial \mathbf{n}_{\mathbf y}}\Big)dS_\mathbf{y},\quad \mathbf{x},\mathbf{y} \in \Gamma(t).
%\]
%Substituting the boundary conditions, we have
\[
\frac{\kappa(\mathbf{x})}{2} =\int_{\Gamma}\Big(-G(\mathbf{x},\mathbf{y})V_n(\mathbf y) -\kappa(\mathbf{y})\frac{\partial G(\mathbf{x},\mathbf{y})}{\partial \mathbf{n}(\mathbf y)}\Big)dS_\mathbf{y},\quad \mathbf{x},\mathbf{y} \in \Gamma(t),
\] 
where $\mathbf{n}=\frac{\nabla \Gamma}{|\nabla \Gamma|}$ with $\mathbf n(\mathbf y)$ referring to the unit outer normal vector at $\mathbf y\in\Gamma(t)$, and $\kappa=\nabla\cdot\mathbf{n}$.

Therefore, we have reformulated the classical Hele-Shaw free boundary problem \eqref{hele-shaw} with $f=0$ as the following system,
\begin{subnumcases}{\label{BIM}}
\int_{\Gamma}G(\mathbf{x},\mathbf{y})V_n(\mathbf y) dS_\mathbf{y}=-\frac{\kappa(\mathbf{x})}{2} -\int_\Gamma \kappa(\mathbf{y})\frac{\partial G(\mathbf{x},\mathbf{y})}{\partial \mathbf{n}(\mathbf y)}dS_\mathbf{y}  ,\quad \mathbf{x},\mathbf{y} \in \Gamma(t),\label{BIE}\\
\frac{d\mathbf x}{dt} =  V_n(\mathbf x) \mathbf n(\mathbf x), \quad \mathbf x\in\Gamma(t),\label{velocity}
\end{subnumcases}
where the solution to the boundary integral equation (BIE) \eqref{BIE} gives the unknown normal velocity $V_n$ on $\Gamma(t)$, which is represented by point clouds without parameterization information, and the kinematic condition \eqref{velocity} enables the free boundary to evolve forward in time. {\color{black} In general, $\frac{d\mathbf{x}}{dt}=V_n({\bf x})\mathbf{n}({\bf x})+V_t({\bf x})\mathbf{t}({\bf x})$, where $V_t$ and $\mathbf{t}$ denote the tangential velocity and the unit tangent vectors, respectively. Since for any choice of $V_t$, the Neumann boundary condition in \eqref{hele-shaw} is satisfied, we set $V_t=0$ for simplicity.

While such a choice can lead to clustered points along the boundary, as needed, we will re-mesh the interface so that the boundary points remain approximately equally spaced along the boundary. The details are provided in Section~\ref{sec5}.}

\section{Numerical approximation for the free boundary problems}\label{sec3}

In this section, we introduce our approach for approximating the solution to \eqref{BIM} through a local representation of the manifold, which naturally allows one to approximate the integration on the boundary directly. We begin with a brief overview of the Generalized Moving Least Squares (GMLS) method, which constructs local manifold charts and approximates higher-order geometric quantities, such as the curvature $\kappa$ along moving boundaries. Next, we present the quadrature rules used to discretize the boundary integral equation \eqref{BIE} in space, yielding a corresponding system of algebraic equations. Solving this system in conjunction with a time discretization to \eqref{velocity} enables us to track the evolution of the boundary $\Gamma(t)$ over time.

\subsection{Generalized Moving Least Squares approximation of the curvature}
Given a set of point cloud data $X=\{\mathbf x_i\}_{i=1}^N\subset\mathbb{R}^n$ sampled from a manifold $\Gamma$ of dimension $d$, we denote the set of $k$-nearest neighbors for any point $\mathbf x_i$ in $X$ by $K(i)=\{\mathbf x_{i_1},\dots,\mathbf x_{i_k}\}$. In this notation, $\mathbf x_i=\mathbf x_{i_1}$. We can obtain a local estimation of the tangent space at each point by the classical local SVD method stated as follows,
\begin{enumerate}
    \item Construct the distance matrix $\mathbf D_{i}:=[\mathbf D_{i_1},\dots,\mathbf D_{i_k}]\in\mathbb {R}^{n\times k}$, where $k>d$ and $\mathbf D_{i_j}:=\mathbf x_{i_j}-\mathbf x_i$, $j=1,\dots,k$.
    \item Obtain an orthonormal basis $\widetilde{\mathbf T}_i=\{\tilde{\mathbf t}_i^{(1)},\dots,\tilde{\mathbf t}_i^{(d)}\}$ for the approximated tangent space $\widetilde{T_{\mathbf x_i}\Gamma}$, by taking the singular value decomposition (SVD) of $\mathbf D_i$.
\end{enumerate}

To attain a higher-order approximation of the local tangent space, we consider the Generalized Moving Least Square method \cite{gross2020meshfree,jiang2024generalized,liang2013solving,mirzaei2012generalized}. Generally, we use an intrinsic polynomial to approximate smooth function $g:\Gamma \to \BR$ over the neighborhood of $\mathbf x_i$ which is the optimal solution for the following least-squares problem,
\[\min_{\hat g\in\mathbb{P}_{\mathbf x_i}^{\ell,d}}\sum_{j=1}^k\Big(g(\mathbf x_{i_j})-\hat g(\mathbf x_{i_j})\Big)^2,\]
where $\mathbb{P}_{\mathbf x_i}^{\ell,d}$ is the space of intrinsic polynomial with degree up to $\ell$ at the point $\mathbf x_i\in\Gamma\subset\mathbb{R}^d$. We refer to $\hat{g}$ as the Generalized Moving Least Squares (GMLS) estimate of $g$. For convenience in the discussion below, we present an error bound (cast in our notation) for this least-squares fit, as derived in \cite{jiang2024generalized}.

\begin{prop}\label{prop3.1}
Let $X \subset \Gamma$ be a set of $N$ uniformly sampled i.i.d. data from a $d$-dimensional manifold $\Gamma$. Assume that $g \in C^{\ell+1}(\Gamma)$. With probability higher than $1-\frac{1}{N}$,
\[
\left|D^\alpha g - D^\alpha \hat{g} \right|  = \mathcal{O} \left( \left(\frac{\log N}{N} \right)^{\frac{\ell+1-|\alpha|}{d}}\right), 
\]
as $N\to \infty$. Here, $D^\alpha$ denotes the multiindex derivative of order $|\alpha|$, and the constant in the big-$\mathcal{O}$ notation can depend on $d$, but it is independent of $N$. 
\end{prop}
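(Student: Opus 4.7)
The plan is to reduce the random, manifold-based result to a deterministic GMLS error estimate in a local chart, and then control the relevant geometric quantities (fill distance, chart quality, stability of the local moment matrix) with high probability. First I would pass to a local intrinsic chart around each sample point $\mathbf{x}_i$: by the SVD step (or by bootstrapping GMLS on coordinate functions), one obtains an approximate tangent space $\widetilde{T_{\mathbf{x}_i}\Gamma}$ and a chart map whose error in representing $\Gamma$ is of higher order than what we ultimately need. In this chart, both $g$ and its polynomial fit $\hat g$ become functions on an open subset of $\mathbb{R}^d$, and $D^\alpha g - D^\alpha \hat g$ can be analyzed by classical moving-least-squares theory for scattered data in Euclidean space.

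Next I would invoke the deterministic GMLS error bound: if the $k$ nearest neighbors in the chart have fill distance $h$ and are quasi-uniform, and the local moment (Gram) matrix of the polynomial basis is uniformly bounded below in its smallest eigenvalue (the standard stability/regularity hypothesis), then for $g\in C^{\ell+1}$ one has the pointwise estimate $|D^\alpha g(\mathbf{x}_i)-D^\alpha \hat g(\mathbf{x}_i)| = \mathcal{O}(h^{\ell+1-|\alpha|})$. This is exactly the bound derived in \cite{mirzaei2012generalized} and used in \cite{jiang2024generalized}; in our context I would cite it and only verify that the chart-induced perturbation of the problem does not spoil the order, which is true because the chart error is controlled by the SVD/GMLS tangent-space approximation, itself of order $h^2$ or better.

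The remaining task is probabilistic: convert the uniform i.i.d. sampling assumption into simultaneous control of (a) the fill distance $h_N = \sup_{\mathbf{x}\in\Gamma}\min_i\|\mathbf{x}-\mathbf{x}_i\|$, and (b) the smallest eigenvalue of the local moment matrix at every sample point. For (a), a standard covering/Chernoff argument on a minimal $\varepsilon$-net of $\Gamma$ gives $h_N = \mathcal{O}\bigl((\log N/N)^{1/d}\bigr)$ with probability at least $1-N^{-1}$ (choose the covering radius $\varepsilon \asymp (\log N/N)^{1/d}$ and union-bound the event that every net-ball contains at least one sample). For (b), the $k$-nearest-neighbor set at each $\mathbf{x}_i$ has diameter comparable to $h_N$, and a concentration inequality for the empirical moment matrix against its population counterpart (which is nondegenerate because $\Gamma$ has a smooth density bounded away from zero) gives a uniform spectral lower bound, again with probability at least $1-N^{-1}$ after a union bound over the $N$ points.

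Finally I would combine these two high-probability events (still total failure probability $\lesssim N^{-1}$, or tighten constants), substitute $h = h_N = \mathcal{O}((\log N/N)^{1/d})$ into the deterministic bound, and raise to the power $\ell+1-|\alpha|$ to obtain the stated estimate. The main obstacle I expect is step (b): getting a \emph{uniform in $\mathbf{x}_i$} spectral lower bound on the local GMLS moment matrix when the neighborhoods are random and the polynomial basis is defined intrinsically through an estimated chart. Controlling the interplay between the random chart error and the random local design — so that quasi-uniformity and well-conditioning hold simultaneously at every sample point with the claimed probability — is the delicate part; everything else is a matter of putting existing deterministic GMLS estimates together with standard covering-number concentration arguments.
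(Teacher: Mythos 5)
The paper does not prove Proposition~\ref{prop3.1}: it is explicitly imported from the reference \cite{jiang2024generalized} (``we present an error bound \dots as derived in \cite{jiang2024generalized}''), so there is no internal proof to compare your sketch against. Your outline (pass to a local chart, invoke the deterministic GMLS pointwise bound $\mathcal{O}(h^{\ell+1-|\alpha|})$, bound the fill distance probabilistically, and control the local moment matrix conditioning) matches the general strategy of the GMLS-on-manifolds literature that the paper is drawing from, and you are right to flag the moment-matrix step as the delicate one. Let me sharpen that flag into a concrete obstruction you would have to face: for i.i.d.\ uniform samples the separation distance scales like $q_{X,\Gamma}\gtrsim N^{-2}$ with high probability (the paper itself invokes this in the proof of Proposition~\ref{prop4.2}, citing \cite{yan2023spectral}), while the fill distance scales like $h_{X,\Gamma}\sim N^{-1/d}(\log N)^{1/d}$, so the mesh ratio $h/q$ is unbounded as $N\to\infty$. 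Consequently the classical deterministic GMLS stability/error estimates of \cite{mirzaei2012generalized}, which assume $\tau$-quasi-uniform data, do not apply verbatim; one needs a stability argument that only requires the $k$-nearest-neighbor stencil to \emph{contain} a well-separated, unisolvent subset at scale $h$, and then control the empirical moment matrix through that subset. Appealing to generic ``concentration of the local moment matrix around its population counterpart'' does not suffice on its own, because the population matrix is itself localized at the random scale $h$ and the conditioning must be bounded jointly with the stencil geometry and the estimated chart, uniformly over all $N$ stencils, inside the stated $1-\frac{1}{N}$ budget. That is precisely the content that the cited reference supplies and that your sketch would need to fill in to become a complete proof.
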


In the tangent space approximation, the function $g$ will be the local parameterization of the manifold, with the tangent space represented by its Jacobian. To illustrate the idea, we consider the case $d=1$ and $n=2$ in our paper. Recall that from the local SVD method, we obtain $\{\tilde{\mathbf t}_i,\tilde{\mathbf n}_i$\}, where $\tilde{\mathbf t}_i$ is the approximate tangent vector at $\mathbf x_i$, and $\tilde{\mathbf n}_i$ is an approximation of the normal direction at $\mathbf x_i$. We first project $\mathbf{D}_i$ onto the estimated tangent space,
\[\tilde T(i)=\{\tilde{\mathbf t}_i\cdot(\mathbf x_{i_1}-\mathbf x_{i_1}),\dots,\tilde{\mathbf t}_i\cdot(\mathbf x_{i_k}-\mathbf x_{i_1})\}\subset \widetilde{T_{\mathbf x_i}\Gamma},\]

and denote the normal components as
\[\tilde N(i)=\{\tilde{\mathbf n}_i\cdot(\mathbf x_{i_1}-\mathbf x_{i_1}),\dots,\tilde{\mathbf n}_{i}\cdot(\mathbf x_{i_k}-\mathbf x_{i_1})\}.\]

Consider a polynomial $\tilde p_i:\widetilde{T_{\mathbf x_i}\Gamma}\rightarrow\mathbb{R}$ defined by
\[\tilde p_i(\tilde s)=\tilde\alpha_{i,1} \tilde s+\tilde\alpha_{i,2} \tilde s^2+\cdots+\tilde\alpha_{i,\ell} \tilde s^\ell,\]
where $\tilde s\in \tilde T(i)$ and the coefficients are obtained by a least squares fit to the data in $\tilde T(i)$ with labels given by $\tilde N(i)$, i.e., 
\[(\tilde\alpha_{i,1},\dots,\tilde\alpha_{i,\ell})=\arg\min \sum_{j=1}^k\Big(\tilde p_i(\tilde s)-\tilde{\mathbf n}_{i}\cdot(\mathbf x_{i_j}-\mathbf x_{i_1})\Big)^2.\]

Therefore, we can define a local coordinate chart for the manifold near the point $\mathbf x_i$ using the embedding map $\tilde\iota_i:\widetilde{T_{\mathbf x_i}\Gamma}\rightarrow\mathbb{R}^2$,
\begin{equation}\label{svd-map}
\tilde\iota_i(\tilde s)=\mathbf x_i+\tilde{\mathbf t}_i \tilde s+\tilde{\mathbf n}_i \tilde p_i(\tilde s).
\end{equation}

With this GMLS approximation, we also inherit the estimated tangent space from the Jacobian of $\tilde\iota_i$ defined in \eqref{svd-map}. The GMLS approximation of the tangent vectors over the neighborhood of $\mathbf x_i$ is
\begin{equation}\label{gmls_tangent}
\mathbf{\hat{t}}_i(\tilde s)=\tilde{\mathbf t}_i+\tilde{\mathbf n}_i\tilde p'_i(\tilde s),
\end{equation}
with the estimated tangent vector at $\mathbf x_i$ denoted by $\mathbf{\hat{t}}_i=\mathbf{\hat{t}}_i(0)$, as a GMLS approximation to the underlying tangent vector $\mathbf t(\mathbf x_i)$. We should point out that the additional normal correction term in \eqref{gmls_tangent} yields an improved approximation compared to the SVD approximation, $\mathbf{\tilde{t}}_i$. For uniformly sampled random data of size $N$, it was shown that the SVD approximation converges at order $N^{-1}$, provided that a sufficiently large number of $k$-nearest points is used (see Remark 9 in \cite{harlim2023radial}). According to Proposition~\ref{prop3.1}, if the manifold is $C^{\ell+1}$, the GMLS estimate $\tilde{\iota}_i(0)$ of the true parameterization $\mathbf x_i=\iota(s)$ for some $s$ has an error of order $N^{-\ell-1}$, ignoring the $\log(N)$ factor. Then the tangent vector estimate $\mathbf{\hat{t}}_i$ converges at rate $N^{-\ell}$, which indicates an improved estimate for $\ell >1$ and nonzero $\tilde{p}'_i(0)=\tilde{\alpha}_{i,1}$. This fact suggests that the estimation for the local parameterization (as well as the local tangent vector) can be refined by applying the GMLS procedure iteratively on the local coordinates $(\mathbf{\hat{t}}_i,\mathbf{\hat{n}}_i)$, where $\mathbf{\hat{n}}_i$ is a unit normal vector pointing outward from $\Omega(t)$ such that $\mathbf{\hat{t}}_i\cdot \mathbf{\hat{n}}_i = 0.$  Specifically, the GMLS approximation of the manifold can be repeated near the point $\mathbf x_i$ by employing the embedding map $\iota_i:T_{\mathbf x_i}\Gamma\rightarrow \mathbb{R}^2$,
\begin{equation}\label{gmls-map}
\iota_i(s)=\mathbf x_i+\mathbf{\hat{t}}_is+\mathbf{\hat{n}}_i p_i(s),
\end{equation}
where $p_i(s)=\alpha_{i,1}s+\alpha_{i,2} s^2+\cdots+\alpha_{i,\ell} s^\ell$ and $\alpha_{i,1},\alpha_{i,2},\cdots,\alpha_{i,\ell}$ are obtained via GMLS fitting using the data pairs from $s\in T(i)=\{\mathbf{\hat{t}}_i\cdot(\mathbf x_{i_1}-\mathbf x_{i_1}),\dots,\mathbf{\hat{t}}_i\cdot(\mathbf x_{i_k}-\mathbf x_{i_1})\}\subset T_{\mathbf x_i}\Gamma$ and $N(i) =\{\mathbf{\hat{n}}_i\cdot(\mathbf x_{i_1}-\mathbf x_{i_1}),\dots,\mathbf{\hat{n}}_i\cdot(\mathbf x_{i_k}-\mathbf x_{i_1})\}$. We iterate this process multiple times, updating tangent vector estimate with the Jacobian of $\iota_i(0)$, and stop when the polynomial linear coefficient $p'_i(0)$ falls below a desired tolerance as a stopping criterion, as further iterations provide negligible improvement. 
In our numerical simulations, we take this stopping criterion to be  $p'_i(0)=\alpha_{i,1} \approx 10^{-12}$, and denote the resulting estimated local tangent and normal vectors as $(\mathbf{\hat{t}}_i,\mathbf{\hat{n}}_i)$ for notational simplicity. As a result, $p_i(s)$ effectively has no linear term, i.e., $p_i(s)=\alpha_{i,2} s^2+\cdots+\alpha_{i,\ell} s^\ell$.
The overall GMLS method is summarized in Algorithm~\ref{alg:gmls}.

\begin{algorithm}[htbp]
\caption{Generalized Moving Least Squares Approximation for the Local Parametrization}\label{alg:gmls}
\begin{algorithmic}[1]
\Require \parbox[t]{\dimexpr0.95\linewidth-\algorithmicindent}{%
A set of (distinct) nodes $X=\{\mathbf x_i\}_{i=1}^N\subset\Gamma$, $k(\ll N)$ nearest neighbors of the point $\mathbf x_i$ in the stencil $K(i)=\{\mathbf x_{i_j}\}_{j=1}^k$, and the approximate bases $\{\tilde{\mathbf t}_i,\tilde{\mathbf n}_i\}_{i=1}^N$ of the local tangent and normal spaces via the classical local SVD method. }
\For{$i\in\{1,\cdots,N\}$}
\State Construct the matrix $\Phi\in\mathbb{R}^{k,\ell}$ and the vector $\Psi\in\mathbb{R}^{k}$ with
\[\Phi_{j,r}=\Big(\tilde{\mathbf t}_i\cdot(\mathbf x_{i_j}-\mathbf x_{i_1})\Big)^r, \quad \Psi_j=\tilde{\mathbf n}_i\cdot(\mathbf x_{i_j}-\mathbf x_{i_1}),\quad j=1,\cdots,k,\quad r=1,\cdots,\ell.\]
\State Obtain the coefficients $\tilde{\alpha}=\begin{bmatrix}\tilde{\alpha}_{i,1},\tilde{\alpha}_{i,2}, \ldots,\tilde{\alpha}_{i,\ell}\end{bmatrix}^\top$ of the intrinsic polynomial $\tilde{p}_i(\tilde s)$ with
\[\tilde{\mathbf \alpha}=(\Phi^\top\Phi)^{-1}\Phi^\top\Psi.\]
\State \parbox[t]{\dimexpr\linewidth-\algorithmicindent}{%
Construct the improved approximation of the local tangent vectors with
\[\hat{\mathbf t}_i(\tilde s)=\tilde{\mathbf t}_i+\tilde{\mathbf n}_i\tilde{p}'_i(\tilde s),\]
 and the estimate local normal vectors $\hat{\mathbf n}_i$ can be constructed by rotating $\hat{\mathbf t}_i$ and choosing the orientation so that it points outward from $\Omega(t)$.}
 \State The following steps improve the estimation, set $\alpha_{i,1} = \tilde{\alpha}_{i,1}$.
 \While{$p_i'(0)=\alpha_{i,1}>10^{-12}$}
\State \parbox[t]{\dimexpr0.95\linewidth-\algorithmicindent}{%
Update the vectors $\tilde{\mathbf t}_i,\tilde{\mathbf n}_i \leftarrow \hat{\mathbf t}_i,\hat{\mathbf n}_i$

Repeat step 2.

Repeat step 3 above with output denoted as
$\alpha=\begin{bmatrix}{\alpha}_{i,1},{\alpha}_{i,2}, \ldots,\alpha_{i,\ell}\end{bmatrix}^\top$ of the intrinsic polynomial $p_i(s)$ with
\[{\mathbf \alpha}=(\Phi^\top\Phi)^{-1}\Phi^\top\Psi.\]

Construct the improved approximation as in Step 4 above, with output 
\[\hat{\mathbf t}_i(s)=\hat{\mathbf t}_i+\hat{\mathbf n}_ip'_i(s),\]
 and the estimate local normal vectors $\hat{\mathbf n}_i$ can be constructed by rotating $\hat{\mathbf t}_i$ and choosing the orientation so that it points outward from $\Omega(t)$.
}

\EndWhile
\EndFor
\Ensure{The intrinsic polynomial approximation for the local parametrization $\{p_i(s)\}_{i=1}^N$.}
\end{algorithmic}
\end{algorithm}

Let $\gamma_i(s)=(s,p_i(s))$ be the coordinates in \eqref{gmls-map}, which is a local parametric representation of $\Gamma(t)$ near the point $\mathbf x_i$. Then, the approximate curvature near the point $\mathbf x_i$ is given by,
\begin{equation}\label{cur_gmls}
\kappa_i(s) =-\frac{\text{det}(\gamma',\gamma'')}{||\gamma'||^3}= -\frac{p_i''(s)}{\Big(1+\big(p_i'(s)\big)^2\Big)^{\frac{3}{2}}},
\end{equation}
with the estimated curvature at $\mathbf x_i$ denoted by $\kappa_i=\kappa_i(0)=-2\alpha_{i,2}$. Here we point out that the negative sign in \eqref{cur_gmls} is due to $\hat{\mathbf n}_i$ pointing out of $\Omega(t)$ (for example, $\kappa_i=1$ and $\alpha_{i,2}<0$ in the case of a unit circle). According to Proposition~\ref{prop3.1}, the GMLS approximation of the curvature converges at rate $\left(N^{-1}\log N\right)^{\ell-1}$ for uniformly sampled random data of size $N$. The convergence result for the unit circle case is demonstrated in Figure \ref{fig:curvature} for $\ell =3$ and $4$, where the average curvature error is defined as \begin{equation}e_{\kappa}=\sqrt{\frac{1}{N}\sum_{i=1}^N(\kappa_i-1)^2}.\label{rmsecurvature}\end{equation}
The numerical results suggest that the errors are slightly faster than the theoretical bound by a factor of $\big(\log N\big)^{\ell-1}$ for $\ell=3$ and $4$.
\begin{figure}[htbp]
\center
\includegraphics[width=0.5\linewidth]{./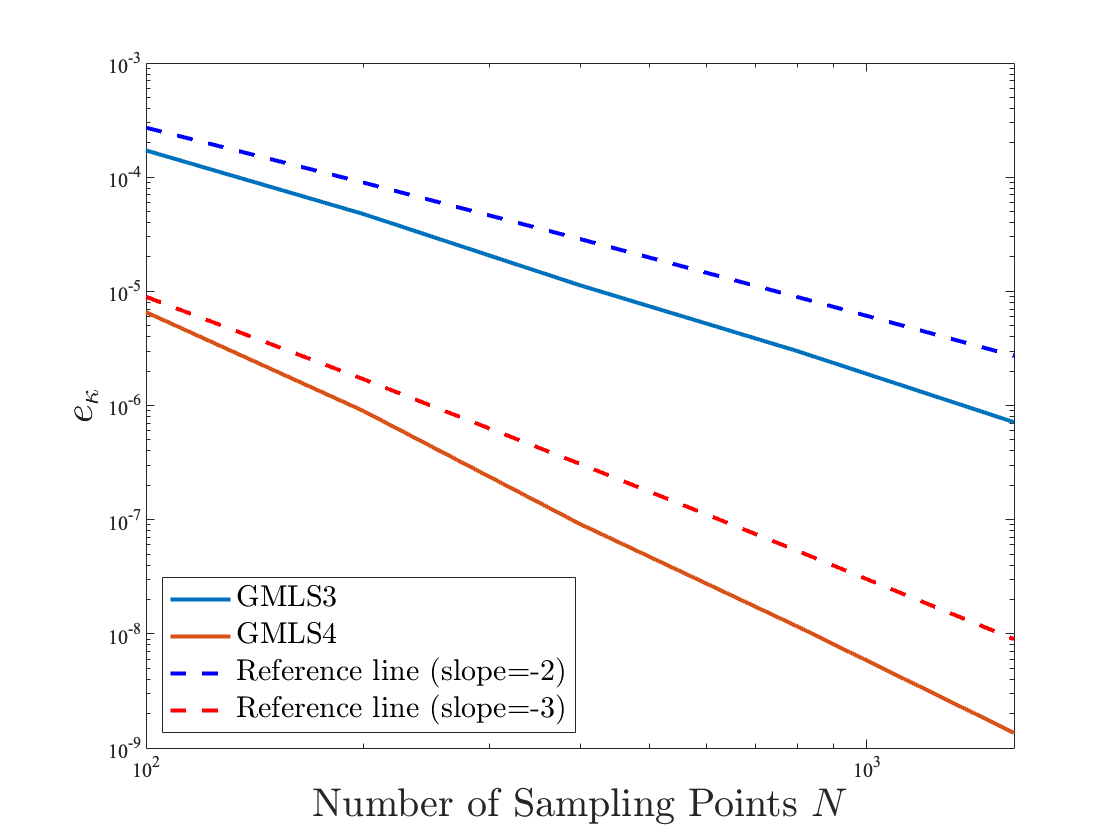}
\caption{Root-Mean-Square errors of the curvature as defined in \eqref{rmsecurvature} as functions of $N$ for the GMLS estimates with $\ell=3$ and 4.}
\label{fig:curvature}
\end{figure}

\subsection{Quadrature rule to approximate the boundary integral equation}\label{quadrature_rule}

In this section, we discretize the BIE \eqref{BIE} in space, which can be written as a linear system $AV_n(\mathbf{x})=b(\mathbf{x})$, with 
\begin{eqnarray}
A V_n(\mathbf{x}) &=& \int_{\Gamma} G(\mathbf x,\mathbf y) V_n(\mathbf y) dS_{\mathbf y}, \label{eq:AVn} \\ b (\mathbf{x})&=&-\frac{\kappa(\mathbf x)}{2}-\int_\Gamma \kappa(\mathbf y)\nabla G(\mathbf x,\mathbf y)\cdot \mathbf{n}(\mathbf y)dS_{\mathbf y}. \label{eq:b}
\end{eqnarray}

We approximate the boundary integral in \eqref{eq:AVn}-\eqref{eq:b} with the approximate local chart obtained from the GMLS approximation in \eqref{gmls-map}. Subsequently, we employ a quadrature rule to derive a discrete approximation of the linear system.

Locally, near $\mathbf{x}_j$, we approximate
\BEA dS_{\mathbf y} = \sqrt{|\iota'(s)|}ds \approx \sqrt{|\iota_j'(s)|}ds = \sqrt{1+\big(p_j'(s)\big)^2}ds, \label{eq:approxJac}\EEA
where we use the local parameterization $\mathbf y=\iota_j(s)$ in \eqref{gmls-map} near $\mathbf{x}_j$ for all $j=1,\ldots, N$. In terms of the local coordinate system through a change of variables, at $\mathbf x=\mathbf x_i\in\Gamma(t)$,
\BEA\label{eqn:A-int}
AV_n(\mathbf{x}_i)&\approx& \int_{\Delta s_{-i}}^{\Delta s_i} G\big(\mathbf x_i,\iota_i(s)\big)V_n\big(\iota_i(s)\big)\sqrt{1+\big(p_i'(s)\big)^2}ds 
+\sum_{j\neq i-1,i}\int_0^{\Delta s_j} G\big(\mathbf x_i,\iota_j(s)\big)V_n\big(\iota_j(s)\big)\sqrt{1+\big(p_j'(s)\big)^2}ds,\label{eq:AVdetail}
\EEA
 where $\Delta s_i={\hat{\mathbf{t}}_i}^\top(\mathbf x_{i+1}-\mathbf x_i)>0$, $\Delta s_{-i}={\hat{\mathbf{t}}_i}^\top(\mathbf x_{i-1}-\mathbf x_{i})<0$. 

Since $G(\mathbf x,\mathbf y)=-\frac{1}{2\pi}\ln\|\mathbf x-\mathbf y\|$, 
\BEA
G\big(\mathbf x_i,\iota_i(s)\big)=-\frac{1}{2\pi}\ln\left\|\mathbf x_i-\big(\mathbf x_i+\mathbf{\hat{t}}_i s+\mathbf{\hat{n}}_i p_i(s)\big)\right\| = -\frac{1}{2\pi}\ln \sqrt{s^2+ p_i^2(s)},
\EEA
it is clear that the first integral in \eqref{eq:AVdetail} is singular at $s=0$, because $\mathbf \iota_i(0)=\mathbf x_i$. Then we rewrite the first term in \eqref{eq:AVdetail} as
\BEA
       \int_{\Delta s_{-i}}^{\Delta s_i} G\big(\mathbf x_i,\iota_i(s)\big)V_n\big(\iota_i(s)\big)\sqrt{1+\big(p_i'(s)\big)^2}ds
       &=& \underbrace{-\frac{1}{2\pi}\int_{\Delta s_{-i}}^{\Delta s_i} \ln |s|V_n\big(\iota_i(s)\big)\sqrt{1+\big(p_i'(s)\big)^2}ds}_{=I_S} \notag \\ && \underbrace{-\frac{1}{4\pi}\int_{\Delta s_{-i}}^{\Delta s_i} \ln\left(1+\frac{p_i^2(s)}{s^2}\right)V_n\big(\iota_i(s)\big)\sqrt{1+\big(p_i'(s)\big)^2}ds}_{I_{NS}}. 
\label{eq_sing}
\EEA
Notice that $\frac{p_i(s)}{s}\Big|_{s=0} = 0$. Thus, \eqref{eq_sing} has a singular term $I_S$ that involves $\ln|s|$. Accordingly, we can rewrite \eqref{eq:AVdetail} as
\BEA
AV_n(\mathbf{x}_i) \approx I_S + I_{N},\label{eq_approx_Ax}
\EEA
where the nonsingular part $I_N$ consists of $I_{NS}$ and the second integral term in \eqref{eq:AVdetail},
\BEA
I_N = I_{NS} + \sum_{j\neq i-1,i}\int_0^{\Delta s_j} G\big(\mathbf x_i,\iota_j(s)\big)V_n\big(\iota_j(s)\big)\sqrt{1+\big(p_j'(s)\big)^2}ds,\label{I_N}
\EEA
which can be further discretized using any standard quadrature rule.

To handle the singular term $I_S$, the function $\psi_i(s):=V_n\big(\iota_i(s)\big)  \sqrt{1 + \big(p_i'(s)\big)^2}$ is approximated by a second-order polynomial expansion $\tilde{\psi}_i(s) = \beta_0 + \beta_1 s + \beta_2 s^2$ using three interpolation points at \( s = 0 \), \( s = \Delta s_i \), and \( s = \Delta s_{-i} \). This polynomial approximation yields the following system
\begin{equation}\label{eqn:beta_coeff}
\left\{
\begin{aligned}
    \tilde{\psi}_i(0) &= V_n(\mathbf{x}_i), \\
    \tilde{\psi}_i(\Delta s_i) &= V_n(\mathbf{x}_{i+1}) \sqrt{1 + \big(p_i'(\Delta s_i)\big)^2}, \\
    \tilde{\psi}_i(\Delta s_{-i}) &= V_n(\mathbf{x}_{i-1}) \sqrt{1 + \big(p_i'(\Delta s_{-i})\big)^2},
\end{aligned}
\right.
\end{equation}
with an explicit solution:
\begin{equation}\label{eqn:beta_value}
\left\{
\begin{aligned}
    \beta_0 &= V_n(\mathbf{x}_i), \\
    \beta_1 &= V_n(\mathbf{x}_{i+1}) \frac{\Delta s_{-i} \sqrt{1 + \big(p_i'(\Delta s_i)\big)^2}}{\Delta s_i (\Delta s_{-i} - \Delta s_i)} 
            - V_n(\mathbf{x}_{i-1}) \frac{\Delta s_i \sqrt{1 + \big(p_i'(\Delta s_{-i})\big)^2}}{\Delta s_{-i} (\Delta s_{-i} - \Delta s_i)} 
            - V_n(\mathbf{x}_i) \frac{\Delta s_i + \Delta s_{-i}}{\Delta s_i \Delta s_{-i}}, \\
    \beta_2 &= V_n(\mathbf{x}_{i+1}) \frac{\sqrt{1 + \big(p_i'(\Delta s_i)\big)^2}}{\Delta s_i (\Delta s_i - \Delta s_{-i})} 
            - V_n(\mathbf{x}_{i-1}) \frac{\sqrt{1 + \big(p_i'(\Delta s_{-i})\big)^2}}{\Delta s_{-i} (\Delta s_i - \Delta s_{-i})} 
            + V_n(\mathbf{x}_i) \frac{1}{\Delta s_i \Delta s_{-i}}.
\end{aligned}
\right.
\end{equation}

Then the singular term $I_S$ can be approximated by,
\BEA
I_S \approx \tilde{I}_{S} = \underbrace{-\frac{1}{2\pi} \int_{\Delta s_{-i}}^{\Delta s_i}\beta_0\ln|s|ds}_{I_{S_1}} \underbrace{-\frac{1}{2\pi}\int_{\Delta s_{-i}}^{\Delta s_i}\beta_1 s\ln|s|ds}_{I_{S_2}}\underbrace{-\frac{1}{2\pi}\int_{\Delta s_{-i}}^{\Delta s_i}\beta_2 s^2\ln|s|ds}_{I_{S_3}},\label{eq:approxIs}
\EEA
where
\begin{equation}\label{eqn:singular}
    \begin{aligned}
        I_{S_1} &=-\frac{\beta_0}{2\pi}\Big(-\Delta s_{-i}\ln(-\Delta s_{-i})+\Delta s_{-i}+\Delta s_i\ln(\Delta s_i)-\Delta s_i\Big),\\
        I_{S_2}&= -\frac{\beta_1}{2\pi}\Big(-\frac{1}{2}(\Delta s_{-i})^2\ln(-\Delta s_{-i})+\frac{1}{4}(\Delta s_{-i})^2+\frac{1}{2}(\Delta s_i)^2\ln(\Delta s_i)-\frac{1}{4}(\Delta s_i)^2\Big),\\
        I_{S_3}&=-\frac{\beta_2}{2\pi}\Big(-\frac{1}{3}(\Delta s_{-i})^3\ln(-\Delta s_{-i})+\frac{1}{9}(\Delta s_{-i})^3+\frac{1}{3}(\Delta s_i)^3\ln(\Delta s_i)-\frac{1}{9}(\Delta s_i)^3\Big).
    \end{aligned}
\end{equation}
with $\beta_0, \beta_1,$ and $\beta_2$ given as \eqref{eqn:beta_value}. 
 
Combining \eqref{eqn:beta_value} and \eqref{eq:approxIs} for the singular term $\tilde{I}_S$ and applying the trapezoidal rule to obtain $\tilde{I}_N$ as a discrete approximation to the nonsingular term $I_N$ in \eqref{I_N} for simplicity, we obtain the matrix $\mathbf{A}$, whose components are reported in Appendix \ref{SM1} in the linear system $\mathbf A \mathbf{\tilde{V}}_n = \mathbf b$, which serves as a discrete approximation of $AV_n(\mathbf x)=b(\mathbf x)$ at $\mathbf{x}_i, i=1,\ldots, N$. Here, $\tilde{\mathbf V}_n$ is an approximation to $\mathbf V_n$, whose $i$th component is $(\mathbf{V}_n)_i = V_n(\mathbf x_i)$.

%and $\mathbf V_n=\begin{bmatrix}V_1,V_2,\cdots, V_N \end{bmatrix}^\top$ with $V_i$ being {\color{red}\sout{the approximation of}} the velocity in the normal direction at point $\mathbf x_i$, $V_n(\mathbf{x}_i)$. {\color{red} Also the notation needs to be changed in page 7 and page 9 (for time discretization). }

Next, we formulate $\mathbf b$ which is the discrete approximation of $b(\mathbf x)$ in \eqref{eq:b}, where $\nabla G(\mathbf x,\mathbf y) = \frac{1}{2\pi}\frac{\mathbf x-\mathbf y}{\|\mathbf x-\mathbf y\|^2}$. Similarly, we approximate $b(\mathbf x_i)$ with the local parameterized volume in \eqref{eq:approxJac} and write the integral in terms of the local coordinate system through a change of variables,
\begin{equation}\label{eqn:b_int}
\begin{aligned}
b(\mathbf x_i)\approx-\frac{\kappa(\mathbf x_i)}{2}&-\frac{1}{2}\Bigg[\int_{\Delta s_{-i}}^{\Delta s_i} \kappa\big(\iota_i(s)\big)\cdot\Big(\nabla G\big(\mathbf x_i,\iota_i(s)\big)\cdot\hat{\mathbf{n}}\big(\iota_i(s)\big)\Big)\sqrt{1+\big(p_i'(s)\big)^2}ds\\
&+\sum_{j\neq i}\int_{\Delta s_{-j}}^{\Delta s_j} \kappa\big(\iota_j(s)\big)\Big(\nabla G\big(\mathbf x_i,\iota_j(s)\big)\cdot \hat{\mathbf{n}}\big(\iota_j(s)
\big)\Big)\sqrt{1+\big(p_j'(s)\big)^2}ds\Bigg],
\end{aligned}
\end{equation}

which can be further discretized using any standard quadrature rule. Notice that the first integral has a singularity when $s=0$, because $\iota_i(0)=\mathbf x_i$. 

To handle the singularity, we calculate $\nabla G\big(\mathbf x_i,\iota_i(s)\big)\cdot\hat{\mathbf{n}}\big(\iota_i(s)\big)\big|_{s=0}$. We first derive the unit outer normal vector $\hat{\mathbf n}\big(\iota_i(s)\big)$ for the points in the neighborhood of $\mathbf x_i$. Since the unit tangent vector is given as,
\[\hat{\mathbf t}\big(\iota_i(s)\big)=\frac{\hat{\mathbf t}_i+\hat{\mathbf n}_ip_i'(s)}{\sqrt{1+\big(p_i'(s)\big)^2}}.\]
\comment{
the normal vector is 
\begin{equation*}
    \begin{aligned}
\frac{d\hat{\mathbf t}\big(\iota_i(s)\big)}{ds}&=\frac{\hat{\mathbf n}_ip''_i(s)\Big(1+(p'_i(s))^2\Big)-\Big(\hat{\mathbf t}_i+\hat{\mathbf n}_ip'_i(s)\Big)p'_i(s)p''_i(s)}{\Big(1+(p'_i(s))^2\Big)^\frac{3}{2}}=\frac{\hat{\mathbf n}_ip''_i(s)-\hat{\mathbf t}_ip'_i(s)p''_i(s)}{\Big(1+\big(p'_i(s)\big)^2\Big)^\frac{3}{2}}
    \end{aligned}
\end{equation*}
Therefore,} then the unit outer normal vector for the points in the neighborhood of $\mathbf x_i$ is 
\[\hat{\mathbf n}\big(\iota_i(s)\big)= \text{sign}\big(p''_i(s)\big) \frac{\frac{d\hat{\mathbf t}\big(\iota_i(s)\big)}{ds}}{\left\|\frac{d\hat{\mathbf t}\big(\iota_i(s)\big)}{ds}\right\|} =\text{sign}\big(p''_i(s)\big)\frac{\hat{\mathbf n}_ip''_i(s)-\hat{\mathbf t}_ip'_i(s)p''_i(s)}{\sqrt{\big(p_i''(s)\big)^2\Big(1+\big(p'_i(s)\big)^2\Big)}},\]
where $\text{sign}\big(p''_i(s)\big)$ ensures that the unit outer normal vector points outward from $\Omega(t)$.

Substitute $\hat{\mathbf n}\big(\iota_i(s)\big)$ into $\displaystyle\nabla G\big(\mathbf x_i,\iota_i(s)\big)\cdot\hat{\mathbf n}\big(\iota_i(s)\big)$, we have
\begin{equation*}
    \begin{aligned}
        \Big(\nabla G\big(\mathbf x_i,\iota_i(s)\big)\cdot\hat{\mathbf n}\big(\iota_i(s)\big)\Big)\Big\vert_{s=0}&=\frac{1}{2\pi}\frac{-s\hat{\mathbf t}_i-p_i(s)\hat{\mathbf n}_i}{s^2+p_i^2(s)}\cdot\hat{\mathbf n}\big(\iota_i(s)\big)\Bigg\vert_{s=0} = 
        \comment{\\
        &=\frac{\text{sign}\big(p''(s)\big)}{2\pi}\frac{\Big(-s\hat{\mathbf t}_i-p_i(s)\hat{\mathbf n}_i\Big)\cdot\Big(\hat{\mathbf n}_ip''_i(s)-\hat{\mathbf t}_ip'_i(s)p''_i(s)\Big)}{\Big(s^2+p_i^2(s)\Big)\sqrt{\big(p''_i(s)\big)^2\big(1+(p'_i(s))^2\big)}}\Big\vert_{s=0}\\
        &=\frac{\text{sign}\big(p''(s)\big)}{2\pi}\frac{sp'_i(s)p''_i(s)-p_i(s)p''_i(s)}{\Big(s^2+p_i^2(s)\Big)\sqrt{\big(p''_i(s)\big)^2\big(1+(p'_i(s))^2\big)}}\Bigg\vert_{s=0}\\
        &=\frac{\text{sign}\big(p''(s)\big)}{2\pi}\frac{p''_i(s)\Big(\frac{p'_i(s)}{s}-\frac{p_i(s)}{s^2}\Big)}{\Big(1+\frac{p_i^2(s)}{s^2}\Big)\sqrt{\big(p''_i(s)\big)^2\big(1+(p'_i(s))^2\big)}}\Big\vert_{s=0}\\
    &=\frac{\text{sign}(\alpha_{i,2})}{2\pi}\frac{2\alpha_{i,2}(2\alpha_{i,2}-\alpha_{i,2})}{|2\alpha_{i,2}|}\\
    &=}\frac{\alpha_{i,2}}{2\pi},
    \end{aligned}
\end{equation*}
where we have used the fact that $p_i(s) = \alpha_{i,2}s^2 + \ldots + \alpha_{i,\ell}s^\ell$ for $\ell \geq 2$.

For example, applying the trapezoidal rule to \eqref{eqn:b_int} for simplicity, we obtain the vector $\mathbf b$ as follows,

\begin{equation}\label{b}
\begin{aligned}
    \mathbf b_i =&-\frac{\kappa(\mathbf x_i)}{2}+\left(\kappa(\mathbf x_{i-1})\Big(\nabla G(\mathbf x_i,\mathbf x_{i-1})\cdot\hat{\mathbf n}_{i-1}\Big)\sqrt{1+\big(p'_i(\Delta s_{-i})\big)^2}+\kappa(\mathbf x_i)\frac{\alpha_{i,2} }{2\pi}\right)\frac{\Delta s_{-i}}{2}\notag \\ &-\left(\kappa(\mathbf x_i)\frac{\alpha_{i,2}}{2\pi}+\kappa(\mathbf x_{i+1})\Big(\nabla G(\mathbf x_i,\mathbf x_{i+1})\cdot\hat{\mathbf n}_{i+1}\Big)\sqrt{1+\big(p'_i(\Delta s_{i})\big)^2}\right)\frac{\Delta s_i}{2}\\
    &- \sum_{j\neq i-1, i}\left(\kappa(\mathbf x_j)\Big(\nabla G(\mathbf x_i,\mathbf x_j)\cdot\hat{\mathbf n}_j\Big)
    +\kappa(\mathbf x_{j+1})\Big(\nabla G(\mathbf x_i,\mathbf x_{j+1})\cdot\hat{\mathbf n}_{j+1}\Big)\sqrt{1+\big(p'_j(\Delta s_{j})\big)^2}\right)\frac{\Delta s_j}{2}.
\end{aligned}
\end{equation}

\subsection{Time discretization}
Now we apply two time discretization schemes to \eqref{velocity}, $\frac{d\mathbf x}{dt}=\tilde{\mathbf V}_n\hat{\mathbf n}$ with $\tilde{\mathbf V}_n=\mathbf A^{-1}\mathbf b$. For example, we will consider the standard forward Euler scheme and the second-order Runge-Kutta scheme. Denoting the time-discretization with a superscript $m$, that is $\mathbf{x}^m$ as an approximation to $\mathbf{x}(t_m)$ with time discretization $\Delta t = t_{m}-t_{m-1}$, we have:
\begin{scheme}[Forward Euler] \label{sch:FE}
\[ \mathbf x^{m+1} = \mathbf x^m + \Delta t (\tilde{\mathbf V}_n)^m \hat{\mathbf n}(\mathbf x^n).\]
\end{scheme}

\begin{scheme}[Second-order Runge-Kutta] \label{sch:rk2}
\[\mathbf x^{m+\frac{1}{2}} = \mathbf x^m+\frac{\Delta t}{2}(\tilde{\mathbf V}_n)^m\hat{\mathbf n}(\mathbf x^m),\,\, 
(\tilde{\mathbf V}_n)^{m+\frac{1}{2}}= \Big(\mathbf A^{-1}\Big)^{m+\frac{1}{2}}\mathbf b^{m+\frac{1}{2}}, \,\,
\mathbf x^{m+1}= \mathbf x^m+\Delta t (\tilde{\mathbf V}_n)^{m+\frac{1}{2}}\hat{\mathbf n}(\mathbf x^{m+\frac{1}{2}}).\]
\end{scheme}

\section{Error analysis}\label{sec4}
In this section, we provide an error analysis for the discrete approximation of the boundary integral $AV_n (\mathbf{x}) = b(\mathbf{x})$ on the data set $\{\mathbf{x}_i\}_{i=1,\ldots, N} \subset \Gamma(t)$ for a fixed time $t \geq 0$. In the following discussion, we will first deduce some technical results to be used for achieving the consistency of the discrete approximation of $AV_n(\mathbf{x}_i)$ and $b(\mathbf{x}_i)$.

\begin{prop}\label{prop4.1}
$X = \{\mathbf{x}_i\}_{i=1,\ldots, N}\subset \Gamma(t)$ are randomly sampled from a uniform distribution of a 1-dimensional smooth, $C^{\ell+1}$, boundary $\Gamma(t)$ (closed curve) at a fixed time $t\geq 0$. We denote $\mathbf{x}_{i-1}$ and $\mathbf{x}_{i+1}$ to be the two adjacent points to $\mathbf{x}_i$ on the boundary curve $\Gamma(t)$ for all $i=1,\ldots, N$, with periodic structure, that is, $\mathbf{x}_{N+1} = \mathbf{x}_1$ and $\mathbf{x}_0 = \mathbf{x}_N$. Suppose that we parameterize $\Gamma(t)$ locally at $\mathbf{x}_i$ with $p_i \in C^\ell(\mathbb{R})$ using Algorithm~\ref{alg:gmls}. Then, with probability higher than $1-\frac{2}{N}$,
\begin{eqnarray}
|\Delta s_i-\Delta s(\mathbf x_i)| =\mathcal{O}\left(\left(\frac{\log N}{N}\right)^{\ell+1}\right),
\end{eqnarray}
and with probability higher than $1-\frac{1}{N}$,
\begin{eqnarray}\label{ErrordS}
\left|dS_{\mathbf{y}} - \sqrt{1+\big(p_j'(s)\big)^2}ds \right | &=& \mathcal{O}\left(\left(\frac{\log N}{N}\right)^{\ell}\right),
\end{eqnarray}
as $N \to \infty$.
\end{prop}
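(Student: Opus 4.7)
The plan is to derive both bounds as consequences of Proposition~\ref{prop3.1} (applied with $d=1$) combined with the classical maximum-gap estimate for i.i.d.\ uniform samples on a one-dimensional compact domain. Pulling the data back via the arc-length parameterization of $\Gamma(t)$ on $[0,L]$ reduces the question to uniform order statistics on an interval, for which the largest gap between consecutive samples is $\mathcal{O}(\log N/N)$ with probability at least $1-1/N$. In particular, $|\mathbf{x}_{i+1}-\mathbf{x}_i| = \mathcal{O}(\log N/N)$ uniformly in $i$ on this event.

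For the first estimate, I would interpret $\Delta s(\mathbf{x}_i) := \mathbf{t}(\mathbf{x}_i)^\top(\mathbf{x}_{i+1}-\mathbf{x}_i)$ as the projection of the edge vector onto the \emph{true} unit tangent at $\mathbf{x}_i$, so that
\[
|\Delta s_i - \Delta s(\mathbf{x}_i)| = \bigl|(\hat{\mathbf{t}}_i - \mathbf{t}(\mathbf{x}_i))^\top (\mathbf{x}_{i+1}-\mathbf{x}_i)\bigr| \leq |\hat{\mathbf{t}}_i - \mathbf{t}(\mathbf{x}_i)|\,|\mathbf{x}_{i+1}-\mathbf{x}_i|.
\]
The refinement loop of Algorithm~\ref{alg:gmls} terminates with $\alpha_{i,1}\approx 0$, so $\hat{\mathbf{t}}_i$ coincides (up to unit normalization) with the Jacobian of $\iota_i$ at $s=0$. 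Proposition~\ref{prop3.1} with $|\alpha|=1$ then yields $|\hat{\mathbf{t}}_i - \mathbf{t}(\mathbf{x}_i)| = \mathcal{O}((\log N/N)^\ell)$ with probability at least $1-1/N$. Multiplying by the edge-length bound via Cauchy--Schwarz and taking a union bound over the two failure events produces the claimed $\mathcal{O}((\log N/N)^{\ell+1})$ with probability at least $1-2/N$.

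For the second estimate, I would write the true local parameterization in the $(\hat{\mathbf{t}}_j,\hat{\mathbf{n}}_j)$ frame as $\iota(s) = \mathbf{x}_j + \hat{\mathbf{t}}_j s + \hat{\mathbf{n}}_j q_j(s)$ for some $C^{\ell+1}$ function $q_j$, so that $dS_\mathbf{y} = \sqrt{1+(q_j'(s))^2}\,ds$ while the approximation uses $\sqrt{1+(p_j'(s))^2}\,ds$. The elementary Lipschitz bound $|\sqrt{1+a^2}-\sqrt{1+b^2}|\leq |a-b|$ reduces the claim to controlling $|q_j'(s)-p_j'(s)|$, and Proposition~\ref{prop3.1} with $|\alpha|=1$, $d=1$, $g=q_j$, and $\hat{g}=p_j$ furnishes the rate $\mathcal{O}((\log N/N)^\ell)$ with probability at least $1-1/N$.

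The main subtlety is verifying that Proposition~\ref{prop3.1}, stated for a single GMLS fit in a fixed chart, really applies to the iteratively refined polynomial $p_j$ from Algorithm~\ref{alg:gmls} and to the corresponding updated tangent vector $\hat{\mathbf{t}}_j$. A careful argument exploits that a single GMLS step already places the chart frame within $\mathcal{O}((\log N/N)^\ell)$ of the true tangent frame (by applying Proposition~\ref{prop3.1} once to the initial SVD chart with $|\alpha|=1$), so that subsequent iterations only improve constants and preserve the rate inherited by $\hat{\mathbf{t}}_j$ and $p_j'$.
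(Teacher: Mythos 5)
Your proof is correct and takes essentially the same route as the paper: both reduce the first bound to a product of the GMLS tangent-vector error from Proposition~\ref{prop3.1} (rate $(\log N/N)^\ell$) and the maximum gap between consecutive uniform samples (rate $\log N/N$ from the fill-distance bound), combined via Cauchy--Schwarz and a union bound; and both reduce the second bound to the $|\alpha|=1$ case of Proposition~\ref{prop3.1}. The only superficial difference is in the arithmetic for the second estimate: the paper rationalizes $\sqrt{1+(p_j')^2}-\sqrt{|\iota'(s)|}$ by writing the difference of squares over the sum, while you invoke the $1$-Lipschitz property of $t\mapsto\sqrt{1+t^2}$ after recasting the true boundary as a graph $q_j$ over the estimated tangent line; these are equivalent one-line computations. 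Your closing remark about whether Proposition~\ref{prop3.1} genuinely applies to the iteratively refined chart of Algorithm~\ref{alg:gmls} is a fair observation that the paper's proof glosses over in the same way, so it does not constitute a gap in your argument relative to theirs.
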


\begin{proof}
Suppose that $X=\{\mathbf{x}_i\}_{i=1,\ldots, N}\subset \Gamma(t)$ are randomly sampled from a uniform distribution. We denote the $\mathbf{x}_{i-1}$ and $\mathbf{x}_{i+1}$ to be the two adjacent points to $\mathbf{x}_i$ on the boundary curve $\Gamma(t)$ for all $i=1,\ldots, N$, with periodic structure, that is, $\mathbf{x}_{N+1} = \mathbf{x}_1$ and $\mathbf{x}_0 = \mathbf{x}_N$. In such a configuration, it is easy to see that, 
\[
\max_{i}\frac{1}{2} d_g (\mathbf{x}_i,\mathbf{x}_{i+1}) = \sup_{\mathbf{x}\in \Gamma(t)} \min_{i} d_g(\mathbf{x},\mathbf{x}_i) = h_{X,\Gamma},
\]
which is known as the fill distance or sometimes regarded as the mesh size. For uniform random samples, it is known that (see Lemma~B.2 in \cite{harlim2023radial}) with probability higher than $1-\frac{1}{N}$, $h_{X,\Gamma} = \mathcal{O} \left(N^{-1}\log(N)\right)$ as $N\to \infty$. Let $h = d_g(\mathbf{x}_{i+1},\mathbf{x}_i)$, one can show that 
\[
\|\mathbf{x}_{i+1}- \mathbf{x}_i\| = h + \mathcal{O}(h^3),  
\]
as $N\to \infty$ (see Lemma 6 in \cite{cl:2006} or the proof of Proposition 3.1 in \cite{jiang2023ghost}). Together, we have that with probability higher than $1-\frac{1}{N}$,
\BEA
\max_i \|\mathbf{x}_{i+1}- \mathbf{x}_i\| = 2h_{X,\Gamma} + \mathcal{O}(h_{X,\Gamma}^3)= \mathcal{O}\left(\frac{\log N}{N}\right), \label{maxh}
\EEA
as $N\to\infty$.
Moreover, by Proposition~\ref{prop3.1}, the tangent vector estimate $\hat{\mathbf t}_i$ of the underlying tangent vector $\mathbf t(\mathbf x_i)$ converges at rate $(N^{-1}\log N)^{\ell}$, with probability higher than $1-\frac{2}{N}$,
\[ |\Delta s_i-\Delta s(\mathbf x_i)|=\left| \hat{\mathbf t}_i^\top(\mathbf x_{i+1} - \mathbf x_i)-\mathbf t(\mathbf x_i)^\top(\mathbf x_{i+1} - \mathbf x_i)\right|\leq \|\hat{\mathbf t}_i-\mathbf t(\mathbf x_i)\| \|\mathbf x_{i+1} - \mathbf x_i\|=\mathcal{O}\left(\left(\frac{\log N}{N} \right)^{\ell+1}\right),\]
as $N\to \infty$.

Also, from $dS_{\mathbf y} = \sqrt{|\iota'(s)|}ds \approx  \sqrt{1+\big(p_j'(s)\big)^2}ds$, one can verify that
\[\left|\sqrt{\hat{\mathbf t}_i^\top\hat{\mathbf t}_i}-\sqrt{\mathbf t(\mathbf x_i)^\top\mathbf t(\mathbf x_i)}\right|=\left|\frac{\hat{\mathbf t}_i^\top\hat{\mathbf t}_i-\mathbf t(\mathbf x_i)^\top\mathbf t(\mathbf x_i)}{\sqrt{\hat{\mathbf t}_i^\top\hat{\mathbf t}_i}+\sqrt{\mathbf t(\mathbf x_i)^\top\mathbf t(\mathbf x_i)}}\right|
%= \left|\frac{\left({\mathbf t}(\mathbf{x}_i) + \hat{\mathbf t}_i\right)^\top\left(\hat{\mathbf t}_i- {\mathbf t}(\mathbf{x}_i)\right)}{\sqrt{\hat{\mathbf t}_i^\top\hat{\mathbf t}_i}+1}\right|
=\mathcal{O}\left(\left(\frac{\log N}{N} \right)^{\ell}\right).\]
This completes the proof.
\end{proof}

Now we study the consistency of our  approximation to $AV_n(\mathbf x)=\int_\Gamma G(\mathbf x,\mathbf y)V_n(\mathbf y)dS_{\mathbf y}$ at each $\mathbf{x}_i$. To make the discussion concise, we define $AV_n(\mathbf{x}_i)= I_{S}^* + I_{N}^*$,
where
\BEA
I_{S}^*&=&-\frac{1}{2\pi}\int_{\Delta s(\mathbf x_{-i})}^{\Delta s(\mathbf x_i)} \ln |s|V_n(\iota(s))\sqrt{|\iota'(s)|}ds,\notag\\
I_{N}^* &=& -\frac{1}{4\pi}\int_{\Delta s(\mathbf x_{-i})}^{\Delta s(\mathbf x_i)} \ln\Big(1+\frac{p^2(s)}{s^2}\Big)V_n\big(\iota(s)\big)\sqrt{|\iota'(s)|}ds +\sum_{j\neq i-1,i}\int_0^{\Delta s(\mathbf{x}_j)} G\big(\mathbf x_i,\iota(s)\big)V_n\big(\iota(s)\big)\sqrt{|\iota'(s)|}ds,\notag
\EEA
with $p(s)$ representing the true parameterization of the boundary curve. The proposition below follows from estimating the errors in approximating these integrals by $\tilde{I}_S$, the approximation to $I_S$ in \eqref{eq:approxIs}, and $\tilde{I}_N$, the quadrature (e.g., Newton's Cotes family) approximation to $I_N$ in \eqref{I_N}.  

\begin{prop}\label{prop4.2}
Assume that the assumptions of Proposition~\ref{prop3.1} and Proposition~\ref{prop4.1} hold. In addition, we also assume that $V_n \in C^3(\Gamma)$. Then, with probability higher than than $1-\frac{23}{N}$,
\[
\left|AV_n(\mathbf{x}_i) - (\mathbf{AV}_n)_i \right|\leq |I_S^*-\tilde{I}_S| + |I_N^*-\tilde{I}_N|=\mathcal{O}\left(\frac{\big(\log N\big)^{\ell+2}}{N^{\ell-2}}\right) + \mathcal{O}\left(\left(\frac{\log N}{N}\right)^{q}\right),
\]
as $N\to\infty$. Here, we denote $\tilde{I}_N$ to be the quadrature approximation to $I_N$ in \eqref{I_N} of order $q$, and $\tilde{I}_S$ as the approximation in \eqref{eq:approxIs} to $I_S$ in \eqref{eq_sing}.
\end{prop}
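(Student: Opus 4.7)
The overall strategy is to split the error into singular and nonsingular contributions via the triangle inequality,
\[
\left|AV_n(\mathbf{x}_i) - (\mathbf{AV}_n)_i\right| \leq \left|I_S^* - \tilde{I}_S\right| + \left|I_N^* - \tilde{I}_N\right|,
\]
and analyze each piece separately. For the nonsingular contribution $|I_N^* - \tilde{I}_N|$, the plan is to introduce an intermediate quantity $I_N$ defined exactly as $I_N^*$ but with the true local parameterization $\iota$ and Jacobian $\sqrt{|\iota'|}$ replaced by their GMLS counterparts $\iota_j$ and $\sqrt{1+(p_j'(s))^2}$. The error $|I_N^* - I_N|$ can then be bounded using the derivative-level estimates of Proposition~\ref{prop3.1} together with the limit-of-integration estimate $|\Delta s_i - \Delta s(\mathbf{x}_i)| = \mathcal{O}((\log N/N)^{\ell+1})$ from Proposition~\ref{prop4.1}. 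The remaining piece $|I_N - \tilde{I}_N|$ is a standard composite Newton--Cotes error on subintervals of length $\mathcal{O}(\log N/N)$ applied to a smooth integrand, which produces the advertised $\mathcal{O}((\log N/N)^q)$ contribution.

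The main obstacle is the singular contribution $|I_S^* - \tilde{I}_S|$, which I would handle by a further split $|I_S^* - \tilde{I}_S| \leq |I_S^* - I_S| + |I_S - \tilde{I}_S|$, where $I_S$ denotes the singular integral written in the GMLS chart but with the exact integrand $\psi_i(s)=V_n(\iota_i(s))\sqrt{1+(p_i'(s))^2}$ rather than its quadratic interpolant $\tilde{\psi}_i$. For $|I_S^* - I_S|$, the key inputs are (i) the Jacobian estimate $|\sqrt{1+(p_i'(s))^2} - \sqrt{|\iota'(s)|}| = \mathcal{O}((\log N/N)^\ell)$ from Proposition~\ref{prop4.1}, (ii) the smoothness of $V_n \in C^3(\Gamma)$, and (iii) the elementary fact that $\int_{-h}^h |\ln|s||\,ds = \mathcal{O}(h\log N)$, combined with the endpoint mismatch $|\Delta s_i - \Delta s(\mathbf{x}_i)| = \mathcal{O}((\log N/N)^{\ell+1})$ times the endpoint value of the integrand (of size $\mathcal{O}(\log N)$). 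For the interpolation error $|I_S - \tilde{I}_S|$, I would apply the standard three-point quadratic interpolation bound $\|\psi_i - \tilde{\psi}_i\|_\infty \leq C \|\psi_i^{(3)}\|_\infty h^3$ on $[\Delta s_{-i},\Delta s_i]$, then observe that the coefficients $\beta_1,\beta_2$ in \eqref{eqn:beta_value} contain the factors $1/(\Delta s_i\Delta s_{-i})$ whose interaction with the $\|p_i^{(k)}\|_\infty$ bounds from Proposition~\ref{prop3.1} forces the $h^{\ell-2}$ scaling in the final rate, and finally integrate the pointwise interpolation error against $|\ln|s||$ to pick up an additional $\log N$ factor. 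Collecting these contributions yields the claimed $\mathcal{O}((\log N)^{\ell+2}/N^{\ell-2})$ rate for the singular piece.

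The $1-23/N$ probability statement then follows from a union bound over the finitely many applications of Propositions~\ref{prop3.1} and~\ref{prop4.1} invoked on the chart centered at $\mathbf{x}_i$ and at its neighbors $\mathbf{x}_{i\pm 1}$, each of which individually fails on an event of probability at most $\mathcal{O}(1/N)$. The hard part is purely the singular piece: one must carefully track how the $\ln|s|$ weight interacts with both the interpolation error and the GMLS approximation of $\iota$ and its derivatives, because the $1/(\Delta s_i\Delta s_{-i})$ denominators appearing in the interpolation coefficients lose several orders of $h$ and make the final convergence rate genuinely sensitive to the smoothness parameter~$\ell$.
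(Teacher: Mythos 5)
Your high-level decomposition, $|AV_n(\mathbf{x}_i) - (\mathbf{AV}_n)_i|\leq |I_S^*-\tilde{I}_S| + |I_N^*-\tilde{I}_N|$ followed by a further split of each piece into a GMLS chart-approximation error and an interpolation/quadrature error, matches the paper. The treatment of $|I_S^*-I_S|$ (endpoint mismatch plus Jacobian error integrated against $\ln|s|$) and of $|I_N-\tilde I_N|$ (standard order-$q$ composite quadrature) is also sound.

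The genuine gap is in your attribution of the dominant $\mathcal{O}\bigl((\log N)^{\ell+2}/N^{\ell-2}\bigr)$ rate. You claim it comes from the interpolation error in $\tilde I_S$, via the $1/(\Delta s_i\Delta s_{-i})$ factors in the coefficients $\beta_1,\beta_2$ of \eqref{eqn:beta_value}. This is incorrect on two counts. First, the standard three-point interpolation bound you invoke, $\|\psi_i-\tilde\psi_i\|_\infty \leq C h^3$, already absorbs those denominator factors; when integrated against $\ln|s|$ over an interval of width $\mathcal{O}(h)$ it gives only $\mathcal{O}\bigl((\log N/N)^4\ln(\log N/N)\bigr)$, which has no $\ell$-dependence and is faster than the claimed rate. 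Second, and more importantly, the true source of the $(\log N)^{\ell+2}/N^{\ell-2}$ term is the nonsingular contribution $|I_N^*-I_N|$, specifically the $I_{NS}$ piece containing $\ln\bigl(1+p^2(s)/s^2\bigr)$. Bounding the pointwise difference between $\ln\bigl(1+p^2(s)/s^2\bigr)\psi^*(s)$ and $\ln\bigl(1+p_i^2(s)/s^2\bigr)\psi_i(s)$ requires controlling $\bigl|p^2(s)-p_i^2(s)\bigr|/s^2$, and for this one must invoke a lower bound on the separation distance, $|s|\geq q_{X,\Gamma}\geq N^{-2}$ with probability at least $1-1/N$. This produces the pointwise bound $\mathcal{O}\bigl((\log N)^{\ell+1}/N^{\ell-3}\bigr)$, and multiplying by the $\mathcal{O}(\log N/N)$ interval length yields the $(\log N)^{\ell+2}/N^{\ell-2}$ term. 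Your sketch never addresses how to control the $1/s^2$ factor near the singular point, and without the separation-distance lower bound the argument for $|I_N^*-I_N|$ does not close. In short: you correctly identify the decomposition but misdiagnose which piece drives the final rate, and omit the key technical ingredient (the lower bound on $q_{X,\Gamma}$) needed to handle the $\ln\bigl(1+p^2/s^2\bigr)$ term.
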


\begin{proof} In the following, we bound the error in estimating $I_S^*$. We report the bound for estimating the error for $I_N^*$ in Appendix \ref{SM2} since it follows similar argument.
Note that
\begin{eqnarray}
|I_S^* - \tilde{I}_S| &\leq& |I_S^* - I_S | + |I_S - \tilde{I}_S| \notag =
\frac{1}{2\pi}
\left|\int_{\Delta s(\mathbf x_{-i})}^{\Delta s(\mathbf x_{i})}\ln|s|\psi^*(s)ds-\int_{\Delta s_{-i}}^{\Delta s_i}\ln|s|\psi_i(s)ds\right| \notag \\ &&+ \frac{1}{2\pi}\left\vert \int_{\Delta s_{-i}}^{\Delta s_i}\ln|s|\psi_i(s)ds-\int_{\Delta s_{-i}}^{\Delta s_i}\ln|s|\tilde{\psi}_i(s)ds\right\vert\label{ErrorIS}
\end{eqnarray}
where $\tilde{\psi}_i = \beta_0 + \beta_1s+ \beta_2 s^2$ denotes the second-order polynomial approximation to $\psi_i(s):=V_n\big(\iota_i(s)\big)  \sqrt{1 + \big(p_i'(s)\big)^2}$ as discussed in \eqref{eqn:beta_coeff} and we have defined $\psi^*(s):= V_n\big(\iota(s)\big)\sqrt{|\iota'(s)|}$ to simplify the notation. 

By \eqref{ErrordS}, Proposition~\ref{prop3.1} and Proposition~\ref{prop4.1}, with probability higher than $1-\frac{2}{N}$,
\BEA
\left|\psi^*(s) - \psi_i(s)\right| &\leq& \left| V_n\big(\iota(s)\big)\right| \left|\sqrt{|\iota'(s)|} - \sqrt{1+\big(p_i'(s)\big)^2}\right|  + \sqrt{1+\big(p_i'(s)\big)^2}\left| V_n\big(\iota(s)\big) - V_n\big(\iota_i(s)\big)  \right|\notag \\
&\leq& \mathcal{O}\left(\left(\frac{\log N}{N}\right)^\ell\right) + C |\iota(s) - \iota_i(s)| = \mathcal{O}\left(\left(\frac{\log N}{N}\right)^\ell\right) + \mathcal{O}\left(\left(\frac{\log N}{N}\right)^{\ell+1}\right) %= \mathcal{O}\left(\left(\frac{\log N}{N}\right)^\ell\right).
\label{Errorpsi}
\EEA

Then the first error term in \eqref{ErrorIS} can be bounded as follows,
\begin{eqnarray}
&&\left|\int_{\Delta s(\mathbf x_{-i})}^{\Delta s(\mathbf x_{i})}\ln|s|\psi^*(s)ds-\int_{\Delta s_{-i}}^{\Delta s_i}\ln|s|\psi_i(s)ds\right| \notag \\
&&\leq \left|\int_{\Delta s(\mathbf x_{-i})}^{\Delta s(\mathbf x_{i})}\ln|s|\psi^*(s)ds - \int_{\Delta s_{-i}}^{\Delta s_i}\ln|s|\psi^*(s)ds\right| + \left|\int_{\Delta s_{-i}}^{\Delta s_i}\ln|s|\big(\psi^*(s)-\psi_i(s)\big)ds\right|\notag \\ 
&&\leq  C \big(|\Delta s_i - \Delta s(\mathbf{x}_i)| + |\Delta s_{-i} - \Delta s(\mathbf{x}_{-i})|\big) + \max_{s\in [\Delta s_{-i},\Delta s_{i}]}\left|\psi^*(s)-\psi_i(s)\right| \left|\int_{\Delta s_{-i}}^{\Delta s_i}\ln|s|ds\right|\notag \\
&&= \mathcal{O}\left(\left(\frac{\log N}{N}\right)^{\ell+1}\right) + \mathcal{O}\left(\left(\frac{\log N}{N}\right)^{\ell+1}\ln\left(\frac{\log N}{N}\right)\right)=\mathcal{O}\left(\left(\frac{\log N}{N}\right)^{\ell+1}\ln\left(\frac{\log N}{N}\right)\right),\notag
\end{eqnarray}
with probability higher than $1-\frac{7}{N}$, where we used the bounds from Proposition~\ref{prop4.1}. 

The second error term in \eqref{ErrorIS} is bounded by the quadratic interpolation error of the integrand,
\BEA
\left\vert \int_{\Delta s_{-i}}^{\Delta s_i}\ln|s|\psi_i(s)ds-\int_{\Delta s_{-i}}^{\Delta s_i}\ln|s|\tilde{\psi}_i(s)ds\right\vert &\leq&\max_{s\in [\Delta s_{-i},\Delta s_{i}]}\left|\psi_i(s)-\tilde{\psi}_i(s)\right|\left| \int_{\Delta s_{-i}}^{\Delta s_i}\ln|s| ds\right| \notag \\ 
&\leq &
C  h_{X,\Gamma}^3 \big(\Delta s_{i}\ln(\Delta s_i)-\Delta s_{-i}\ln(-\Delta s_{-i})\big)= \mathcal{O}\left(\left(\frac{\log N}{N}\right)^4\ln\left(\frac{\log N}{N}\right)\right),\notag
\EEA
with probability higher than $1-\frac{2}{N}$ for some constant $C$, under the assumption that $\psi_i$ is $C^3$.

Asserting the rates in these bounds to \eqref{ErrorIS} and accounting for all probabilities, we obtain
\BEA
|I_S^* - \tilde{I}_S | = \mathcal{O}\left(\left(\frac{\log N}{N}\right)^{\ell+1}\ln\left(\frac{\log N}{N}\right)\right)+\mathcal{O}\left(\left(\frac{\log N}{N}\right)^4\ln\left(\frac{\log N}{N}\right)\right)\label{ErrorIs}
\EEA
with probability higher than $1-\frac{9}{N}$. 
\end{proof}

\comment{
given by
\[ I_S^*=\frac{1}{2\pi}\int_{\Delta s(\mathbf x_{-i})}^{\Delta s(\mathbf x_i)} \ln |s|V_n(\iota_i(s))\sqrt{|\iota'(s)|}ds \approx I_S=\frac{1}{2\pi}\int_{\Delta s_{-i}}^{\Delta s_i} \ln |s|V_n(\iota_i(s))\sqrt{1+(p_i'(s))^2}ds,\]
and 
\[I_N^*\approx I_N = \frac{1}{4\pi}\int_{\Delta s_{-i}}^{\Delta s_i} \ln\Big(1+\frac{p_i^2(s)}{s^2}\Big)V_n(\iota_i(s))\sqrt{1+(p_i'(s))^2}ds -\sum_{j\neq i,i-1}\int_0^{\Delta s_j} G(\mathbf x_i,\iota_j(s))V_n(\iota_j(s))\sqrt{1+(p_j'(s))^2}ds\]
we have
\begin{equation*}
    \begin{aligned}
        |I_S - I_S^*|&=\frac{1}{2\pi}\Big\vert \int_{\Delta s_{-i}}^{\Delta s_i}\ln|s|\psi_i(s)ds-\int_{\Delta s_{-i}}^{\Delta s_i}\ln|s|f_1(s)ds+\int_{\Delta s_{-i}}^{\Delta s_i}\ln|s|f_1(s)ds-\int_{\Delta s(\mathbf x_i)}^{\Delta s(\mathbf x_{-i})}\ln|s|f_1^*(s)ds\Big\vert\\
        &\leq \mathcal{O}((\Delta s)^{3})+\frac{1}{2\pi}\Big\vert \int_{\Delta s_{-i}}^{\Delta s_i}\ln|s|f_1(s)ds - \int_{\Delta s(\mathbf x_i)}^{\Delta s(\mathbf x_{-i})}\ln|s|f_1(s)ds\Big\vert+\Big\vert \int_{\Delta s(\mathbf x_i)}^{\Delta s(\mathbf x_{-i})}\ln|s|f_1(s)ds-
        \int_{\Delta s(\mathbf x_i)}^{\Delta s(\mathbf x_{-i})}\ln|s|f_1^*(s)ds\Big\vert\\
        &=\mathcal{O}(z^3)+\mathcal{O}(N^{-l}z)+\mathcal{O}(N^{-l})
    \end{aligned}
\end{equation*}
where $f_1(s)=V_n(\iota_i(s))\sqrt{|\iota'(s)|}$.

and

\begin{equation*}
    \begin{aligned}
        |I_N-I_N^*|&=\Big\vert\int_{\Delta s_{-i}}^{\Delta s_i} f_2(s)ds +\sum_{j\neq i,i-1}\int_0^{\Delta s_j} f_3(s)ds-\int_{\Delta s(\mathbf x_{-i})}^{\Delta s(\mathbf x_i)} f_2^*(s)ds -\int_0^{\Delta s(\mathbf x_j)} f_3^*(s)ds\Big\vert\\
        &\leq \Big\vert \int_{\Delta s_{-i}}^{\Delta s_i}f_2(s)ds - \int_{\Delta s(\mathbf x_i)}^{\Delta s(\mathbf x_{-i})}f_2(s)ds\Big\vert+\Big\vert \int_{\Delta s(\mathbf x_i)}^{\Delta s(\mathbf x_{-i})}f_2(s)ds-
        \int_{\Delta s(\mathbf x_i)}^{\Delta s(\mathbf x_{-i})}f_2^*(s)ds\Big\vert\\
        &\quad+\Big\vert \int_0^{\Delta s_j} f_3(s)ds - \int_0^{\Delta s(\mathbf x_j)} f_3(s)ds\Big\vert+\Big\vert \int_0^{\Delta s(\mathbf x_j)} f_3(s)ds - \int_0^{\Delta s(\mathbf x_j)} f_3^*(s)ds\Big\vert\\
        &=\mathcal{O}(N^{-l}z)+\mathcal{O}(N^{-l})
    \end{aligned}
\end{equation*}
where $f_2(s)=\ln\Big(1+\frac{p_i^2(s)}{s^2}\Big)V_n(\iota_i(s))\sqrt{1+(p_i'(s))^2}$ and $f_3(s)=G(\mathbf x_i,\iota_j(s))V_n(\iota_j(s))\sqrt{1+(p_j'(s))^2}$.

Then we employ the trapezoidal rule on $I_N$, which gives the error of order $\mathcal{O}(z^2)$.

Therefore, the overall error for $AV_n(\mathbf x)$ is
\[|\mathbf A\mathbf V_n-A V_n(\mathbf x) |= \mathcal{O}(z^3)+\mathcal{O}(N^{-l}z)+\mathcal{O}(N^{-l}) + \mathcal{O}(z^2).\]}

Next, we report the consistency of our approximation to \eqref{eq:b}. See Appendix \ref{SM3} for the detailed proof, which follows the same strategy as the proof above.

\comment{Move this to an Appendix: For simplicity, we define
\[
b(\mathbf{x}_i)= -\frac{\kappa(\mathbf x_i)}{2} + \frac{1}{2}I_{b}^*, 
\]
where
\[I^*_b=-\sum_{j}\int_{\Delta s(\mathbf x_{-j})}^{\Delta s(\mathbf x_j)} \kappa\big(\iota(s)\big)\Big(\nabla G\big(\mathbf x_i,\iota(s)\big)\cdot \mathbf{n}\big(\iota(s)
\big)\Big)\sqrt{|\iota'(s)|}ds.\]
}

\begin{prop}\label{prop4.3}
Let the assumptions of Proposition~\ref{prop3.1} and Proposition~\ref{prop4.1} be valid.  Then, with probability higher than than $1-\frac{10}{N}$,
\[
\left|b(\mathbf{x}_i) - \mathbf b_i \right| \comment{\leq\frac{1}{2}\left|\kappa(\mathbf x_i)-\kappa_i\right|+\frac{1}{2}|I_b^*-\tilde{I}_b|}=\mathcal{O}\left(\frac{\big(\log N\big)^{\ell+2}}{N^{\ell-2}}\right)+\mathcal{O}\left(\left(\frac{\log N}{N}\right)^{q}\right),
\]
as $N\to\infty$, where $q$ denotes the quadrature rule error rate. 
\comment{Here, we denote $\tilde{I}_b$ as the Simpson's approximation to $I_b$ with
\[I_b=\sum_{j}\int_{\Delta s_{-j}}^{\Delta s_j} \kappa\big(\iota_j(s)\big)\Big(\nabla G\big(\mathbf x_i,\iota_j(s)\big)\cdot \hat{\mathbf{n}}\big(\iota_j(s)
\big)\Big)\sqrt{1+\big(p_j'(s)\big)^2}ds.\]}
\end{prop}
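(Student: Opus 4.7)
The plan is to mirror the proof of Proposition~\ref{prop4.2} with two modifications: (i) one must control the pointwise curvature approximation in the non-integral term $-\kappa/2$, and (ii) the kernel here is $\nabla G(\mathbf{x},\mathbf{y})\cdot \mathbf{n}(\mathbf{y})$, which scales like $\|\mathbf{x}-\mathbf{y}\|^{-1}$ off the diagonal but, crucially, has only a removable (rather than genuine logarithmic) singularity on the diagonal. I would begin by writing $b(\mathbf{x}_i)=-\tfrac12 \kappa(\mathbf{x}_i)+\tfrac12 I_b^*$ and $\mathbf{b}_i=-\tfrac12 \kappa_i+\tfrac12 \tilde{I}_b$, so that
\[
|b(\mathbf{x}_i)-\mathbf{b}_i|\leq \tfrac12|\kappa(\mathbf{x}_i)-\kappa_i|+\tfrac12|I_b^*-\tilde{I}_b|.
\]
Proposition~\ref{prop3.1} with $|\alpha|=2$ immediately yields $|\kappa(\mathbf{x}_i)-\kappa_i|=\mathcal{O}((\log N/N)^{\ell-1})$ with probability $\geq 1-1/N$, which is absorbed into the stated bound.

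For the integral term I split $|I_b^*-\tilde{I}_b|\leq |I_b^*-I_b|+|I_b-\tilde{I}_b|$, where $I_b$ is the continuous integral written in terms of the GMLS parametrizations $\iota_j$ and intervals $[\Delta s_{-j},\Delta s_j]$. The key observation, already established in Section~\ref{sec3}, is that $\nabla G(\mathbf{x}_i,\iota_i(s))\cdot \hat{\mathbf{n}}(\iota_i(s))|_{s=0}=\alpha_{i,2}/(2\pi)$, a finite limit; therefore the $j=i$ (diagonal) contribution is a smooth integrand and requires no singular treatment such as the logarithmic extraction used in \eqref{eq_sing}. A standard quadrature rule applied to all $j$ then produces $|I_b-\tilde{I}_b|=\mathcal{O}((\log N/N)^q)$.

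To bound $|I_b^*-I_b|$, I would expand the integrand difference as a telescoping product-rule estimate over the four factors $\kappa(\iota(s))$ vs.\ $\kappa_j(\iota_j(s))$, $\nabla G(\mathbf{x}_i,\iota(s))$ vs.\ $\nabla G(\mathbf{x}_i,\iota_j(s))$, $\mathbf{n}(\iota(s))$ vs.\ $\hat{\mathbf{n}}(\iota_j(s))$, and $\sqrt{|\iota'(s)|}$ vs.\ $\sqrt{1+(p_j'(s))^2}$. Each is controlled by Proposition~\ref{prop3.1} and Proposition~\ref{prop4.1}: the curvature and normal differences are $\mathcal{O}((\log N/N)^{\ell-1})$, the Jacobian difference is $\mathcal{O}((\log N/N)^{\ell})$ (as in \eqref{ErrordS}), and $|\iota(s)-\iota_j(s)|=\mathcal{O}((\log N/N)^{\ell+1})$. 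For $j\neq i$, the kernel difference involves a denominator of size $\|\mathbf{x}_i-\iota_j(s)\|$; here I would use the separation-distance lower bound $\min_{i\neq j}d_g(\mathbf{x}_i,\mathbf{x}_j)\geq N^{-2}$ (Lemma~A.2 of \cite{yan2023spectral}, with probability $\geq 1-1/N$) exactly as in the proof of Proposition~\ref{prop4.2}, which is what converts a pointwise error of order $(\log N)^{\ell+1}N^{-\ell-1}$ into the worst case $(\log N)^{\ell+1}N^{-\ell+1}$ after the denominator is accounted for. Summing $N-1$ such contributions and also accounting for the endpoint mismatch $|\Delta s_j-\Delta s(\mathbf{x}_j)|=\mathcal{O}((\log N/N)^{\ell+1})$ (a subdominant boundary correction), I obtain the dominant rate $\mathcal{O}((\log N)^{\ell+2}/N^{\ell-2})$ for $|I_b^*-I_b|$.

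The main obstacle is the bookkeeping of the four simultaneous factor errors in the presence of the $\|\mathbf{x}_i-\mathbf{y}\|^{-1}$ amplification and the separation-distance denominator, which together determine why the exponent of $N$ drops to $\ell-2$ rather than $\ell$. A final union bound across the roughly ten invocations of high-probability statements (Propositions~\ref{prop3.1}, \ref{prop4.1}, and the separation lemma, applied to each factor over the interior stencil) produces the stated probability $1-10/N$. Combining the two pieces yields
\[
|b(\mathbf{x}_i)-\mathbf{b}_i|=\mathcal{O}\!\left(\frac{(\log N)^{\ell+2}}{N^{\ell-2}}\right)+\mathcal{O}\!\left(\left(\frac{\log N}{N}\right)^{q}\right),
\]
completing the proof.
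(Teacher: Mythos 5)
Your proposal mirrors the paper's proof of Proposition~\ref{prop4.3} (Appendix~\ref{App:A}) essentially step for step: the same decomposition $|b(\mathbf{x}_i)-\mathbf{b}_i|\leq\tfrac12|\kappa(\mathbf{x}_i)-\kappa_i|+\tfrac12|I_b^*-\tilde{I}_b|$, the same split $|I_b^*-\tilde{I}_b|\leq|I_b^*-I_b|+|I_b-\tilde{I}_b|$, the same factor-by-factor telescoping using Propositions~\ref{prop3.1} and~\ref{prop4.1} together with the separation-distance lower bound $q_{X,\Gamma}\geq N^{-2}$ to control the $\|\mathbf{x}_i-\mathbf{y}\|^{-1}$ amplification, and the same probability accounting to arrive at $1-\tfrac{10}{N}$. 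Your explicit remark that the diagonal kernel has only a removable singularity (with limit $\alpha_{i,2}/(2\pi)$, established in Section~\ref{quadrature_rule}) is left implicit in the appendix but is the same reason the paper's argument needs no logarithmic extraction here.
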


\comment{Move the proof to the Appendix
\begin{proof}
     By Proposition~\ref{prop3.1} and Proposition~\ref{prop4.1}, with probability higher than $1-\frac{1}{N}$,
     \begin{equation}\label{Errorkappa}
     \left|\kappa(\mathbf x_i)-\kappa_i\right|=\mathcal{O}\left(\left(\frac{\log N}{N}\right)^{\ell-1}\right).
     \end{equation}
Then we consider the error bond for $\left|I^*_b-\tilde{I}_b\right|$. Note that
\begin{eqnarray}
        \left|I^*_b-\tilde{I}_b\right|&\leq& \left|I^*_b-I_b\right|+\left|I_b-\tilde{I}_b\right|\notag\\
    &=&\sum_{j}\left|\int_{\Delta s(\mathbf x_{-j})}^{\Delta s(\mathbf x_j)} \Big(\nabla G\big(\mathbf x_i,\iota(s)\big)\cdot \mathbf{n}\big(\iota(s)
\big)\Big)\phi^*(s)ds-\int_{\Delta s_{-j}}^{\Delta s_j} \Big(\nabla G\big(\mathbf x_i,\iota_j(s)\big)\cdot \hat{\mathbf{n}}\big(\iota_j(s)
\big)\Big)\phi_j(s)ds\right|+\mathcal{O}\left(\left(\frac{\log N}{N}\right)^{q}\right),\notag
\end{eqnarray}
where $\tilde{I}_b$ is the {\color{red}quadrature} rule approximation to $I_b$ with error of order-$q$, and $\phi^*(s):=\kappa\big(\iota(s)\big)\sqrt{|\iota'(s)|}$, $\phi_j(s):=\kappa\big(\iota_j(s)\big)\sqrt{1+\big(p_j'(s)\big)^2}$.

By Proposition~\ref{prop3.1}, with probability higher than $1-\frac{2}{N}$.
\begin{eqnarray}
    \left|\phi^*(s)-\phi_j(s)\right|&=&\left|\kappa\big(\iota(s)\big)\sqrt{|\iota'(s)|}-\kappa\big(\iota_j(s)\big)\sqrt{1+\big(p_j'(s)\big)^2}\right|\notag\\
    &\leq& \left|\kappa\big(\iota(s)\big)\right|\left|\sqrt{|\iota'(s)|}-\sqrt{1+\big(p'_i(s)\big)^2}\right|+\sqrt{1+\big(p'_i(s)\big)^2}\left|\kappa\big(\iota(s)\big)-\kappa_i(s)\right|\notag\\
&=&\mathcal{O}\left(\left(\frac{\log N}{N}\right)^{\ell}\right) + \mathcal{O}\left(\left(\frac{\log N}{N}\right)^{\ell-1}\right)=\mathcal{O}\left(\left(\frac{\log N}{N}\right)^{\ell-1}\right).\notag
\end{eqnarray}

Then, with probability higher than $1-\frac{4}{N}$,
\begin{eqnarray}
   &&\left|\nabla G\big(\mathbf x_i,\iota(s)\big)\cdot\mathbf{n}\big(\iota(s)\big)\phi^*(s)-\nabla G\big(\mathbf x_i,\iota_j(s)\big)\cdot\hat{\mathbf n}\big(\iota_j(s)\big)\phi_j(s)\right|\notag\\
    &&\leq\left|\nabla G\big(\mathbf x_i,\iota(s)\big)\cdot\mathbf{n}\big(\iota(s)\big)-\nabla G\big(\mathbf x_i,\iota_j(s)\big)\cdot\hat{\mathbf n}\big(\iota_j(s)\big)\right|\left|\phi^*(s)\right|+\left|\nabla G\big(\mathbf x_i,\iota_j(s)\big)\cdot\hat{\mathbf n}\big(\iota_j(s)\big)\right|\left|\phi^*(s)-\phi_j(s)\right|\notag\\
    &&\leq\left|\nabla G\big(\mathbf x_i,\iota(s)\big)\cdot\left(\mathbf {n}\big(\iota(s)\big)-\hat{\mathbf n}\big(\iota_j(s)\big)\right)\right| \left|\phi^*(s)\right| +\left|\left(\nabla G\big(\mathbf x_i,\iota(s)\big)-\nabla G\big(\mathbf x_i,\iota_j(s)\big)\right)\cdot\hat{\mathbf n}\big(\iota_j(s)\big)\right|\left|\phi^*(s)\right|+\mathcal{O}\left(\left(\frac{\log N}{N}\right)^{\ell-1}\right)\notag\\
    &&=C\frac{\left|\big(\mathbf{x}_i -\iota(s)\big)\cdot\big(\mathbf n\big(\iota(s)\big) - \hat{\mathbf n}\big(\iota_j(s)\big)\big)\right|}{\|\mathbf{x}_i -\iota(s)\|^2}+C\left|\frac{-\|\mathbf x_i-\iota(s)\|^2\mathbf {1}_2+2\big(\mathbf x_i-\iota(s)\big)\odot\big(\mathbf x_i-\iota(s)\big)}{\|\mathbf x_i-\iota(s)\|^4}\odot\big(\iota(s)-\iota_j(s)\big)\cdot\hat{\mathbf n}\big(\iota_j(s)\big)\right|\notag\\
    &&\quad+\mathcal{O}\left(\left(\frac{\log N}{N}\right)^{\ell-1}\right)\notag\\
&&=\mathcal{O}\left(\frac{\big(\log N\big)^{\ell}}{N^{\ell-2}}\right)+\mathcal{O}\left(\frac{\big(\log N\big)^{\ell+1}}{N^{\ell-3}}\right)+\mathcal{O}\left(\left(\frac{\log N}{N}\right)^{\ell-1}\right)=\mathcal{O}\left(\frac{\big(\log N\big)^{\ell+1}}{N^{\ell-3}}\right)\notag
\end{eqnarray}
where $\mathbf{1}_2 = \begin{bmatrix} 1,1\end{bmatrix}^\top$ and $\odot$ represents the Hadamard product.

Therefore, with probability higher than $1-\frac{9}{N}$,
\begin{eqnarray}
    \left| I^*_b-I_b\right|&\leq&\sum_{j}\left|\int_{\Delta s(\mathbf x_{-j})}^{\Delta s(\mathbf x_j)} \Big(\nabla G\big(\mathbf x_i,\iota(s)\big)\cdot \mathbf{n}\big(\iota(s)
\big)\Big)\phi^*(s)ds-\int_{\Delta s_{-j}}^{\Delta s_j} \Big(\nabla G\big(\mathbf x_i,\iota(s)\big)\cdot \mathbf{n}\big(\iota(s)
\big)\Big)\phi^*(s)ds\right|\notag\\
&&+\sum_j\left|\int_{\Delta s_{-j}}^{\Delta s_j} \Big(\nabla G\big(\mathbf x_i,\iota(s)\big)\cdot \mathbf{n}\big(\iota(s)
\big)\Big)\phi^*(s)ds-\int_{\Delta s_{-j}}^{\Delta s_j} \Big(\nabla G\big(\mathbf x_i,\iota_j(s)\big)\cdot \hat{\mathbf{n}}\big(\iota_j(s)
\big)\Big)\phi_j(s)ds\right|\notag\\
&=&C \big(|\Delta s_i - \Delta s(\mathbf{x}_i)|+ |\Delta s_{-i} - \Delta s(\mathbf{x}_{-i})|\big)\notag\\
&&+ \max_{s\in [\Delta s_{-i},\Delta s_{i}]}\left|\nabla G\big(\mathbf x_i,\iota(s)\big)\cdot\mathbf{n}\big(\iota(s)\big)\phi^*(s)-\nabla G\big(\mathbf x_i,\iota_j(s)\big)\cdot\hat{\mathbf n}\big(\iota_j(s)\big)\phi_j(s)\right| \left|\Delta s_i-\Delta s_{-i}\right|\notag\\
&=&\mathcal{O}\left(\left(\frac{\log N}{N}\right)^{\ell+1}\right)+\mathcal{O}\left(\frac{\big(\log N\big)^{\ell+2}}{N^{\ell-2}}\right)=\mathcal{O}\left(\frac{\big(\log N\big)^{\ell+2}}{N^{\ell-2}}\right).\notag
\end{eqnarray}

Finally, we have
\BEA
|I_b^*-\tilde{I}_b| \leq |I_b^*-I_b|+ |I_b-\tilde{I}_b| = \mathcal{O}\left(\frac{\big(\log N\big)^{\ell+2}}{N^{\ell-2}}\right) + \mathcal{O}\left(\left(\frac{\log N}{N}\right)^{q}\right)\label{ErrorIb}.
\EEA
With the bounds in \eqref{Errorkappa} and \eqref{ErrorIb}, the proof is completed.
\end{proof}
}

In the following, we verify the invertibility (and stability) of the estimator $\mathbf{A}$ of the linear system $\mathbf{A}\mathbf{V}_n = \mathbf{b}$ that will be numerically solved. In this context, recall that $(\mathbf{V}_n)_i=V_n(\mathbf{x}_i)$. Our approach will be based on the following result.

\begin{prop}[McLean \cite{mclean1991variation}.]
The operator $A$ as defined in \eqref{eq:AVn} is self-adjoint and coercive under the inner product induced by the Hilbert space, 
\[
H_* = \left\{ \nu \in L^2(\Gamma) \;|\; \langle \nu, 1 \rangle = \int_\Gamma \nu(\mathbf{x})\,dS_{\mathbf{x}} = 0 \right\}.
\]
The coercivity can be expressed as follows. For all function $\nu \in H_*$,
\[
\langle \nu, A\nu \rangle = \int_{\Gamma} \nu(\mathbf{x})A\nu(\mathbf{x}) dS_{\mathbf{x}} \geq \zeta \|\nu\|^2_{L^2(\Gamma)},
\]
where $\zeta >0$ denotes the coercivity constant.
\end{prop}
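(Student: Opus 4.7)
The plan is to prove the two claims in sequence—self-adjointness first, then coercivity—leveraging the classical theory of the 2D logarithmic single-layer boundary integral operator.

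For self-adjointness, the symmetry of the fundamental solution, $G(\mathbf{x},\mathbf{y})=-\frac{1}{2\pi}\ln\|\mathbf{x}-\mathbf{y}\|=G(\mathbf{y},\mathbf{x})$, combined with Fubini's theorem, directly yields
$$\langle \mu, A\nu\rangle=\iint_{\Gamma\times\Gamma}\mu(\mathbf{x})\,G(\mathbf{x},\mathbf{y})\,\nu(\mathbf{y})\,dS_{\mathbf{x}}\,dS_{\mathbf{y}}=\langle A\mu,\nu\rangle$$
for all $\mu,\nu\in H_*$. The exchange of integrals is justified because the logarithmic diagonal singularity is integrable and $A$ is a bounded operator $L^2(\Gamma)\to H^1(\Gamma)\hookrightarrow L^2(\Gamma)$ on a smooth closed curve.

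For positivity, I would represent the quadratic form as a Dirichlet energy. For $\nu\in H_*$, extend the single-layer potential $u_\nu(\mathbf{x}):=\int_\Gamma G(\mathbf{x},\mathbf{y})\,\nu(\mathbf{y})\,dS_{\mathbf{y}}$ to all of $\mathbb{R}^2$. Then $u_\nu$ is harmonic on $\Omega\cup\Omega^c$, continuous across $\Gamma$, and satisfies the jump relation $\partial_{\mathbf{n}} u_\nu^- -\partial_{\mathbf{n}} u_\nu^+=\nu$. Because $\int_\Gamma\nu\,dS=0$, the logarithmic term in the far-field expansion cancels, so $u_\nu=O(|\mathbf{x}|^{-1})$ and $\nabla u_\nu\in L^2(\mathbb{R}^2)$. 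Applying Green's identity on $\Omega$ and on $B_R\setminus\overline\Omega$, then passing to the limit $R\to\infty$, the contribution from $\partial B_R$ vanishes and I obtain
$$\langle \nu, A\nu\rangle=\int_\Gamma u_\nu(\mathbf{x})\,\nu(\mathbf{x})\,dS_{\mathbf{x}}=\int_{\mathbb{R}^2}|\nabla u_\nu|^2\,dV\geq 0,$$
with equality iff $u_\nu\equiv 0$, which forces $\nu=0$. This establishes strict positivity of $A$ on $H_*$ and the injectivity needed downstream.

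Producing the explicit coercivity constant $\zeta>0$ is the heart of the matter and the main obstacle. The plan is to invoke the pseudodifferential calculus used by McLean: after localization via arc-length charts on $\Gamma(t)$ (available from the $C^{\ell+1}$ smoothness assumed in Proposition~\ref{prop4.1}), $A$ is a classical elliptic $\Psi$DO of order $-1$ with strictly positive principal symbol—on the unit circle, the Fourier symbol is $(2|n|)^{-1}$ for $n\neq 0$. Gårding's inequality then yields a lower bound in the natural energy norm of $A$, and a Fredholm/compactness argument, combined with the injectivity on $H_*$ established above, absorbs the lower-order remainder to produce $\zeta=\zeta(\Gamma(t))>0$. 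The delicate step—and the reason the statement is cited from McLean rather than proved from scratch—is calibrating this coercivity bound in the $L^2(\Gamma)$ norm as written, since the native energy norm for the logarithmic single-layer operator is $H^{-1/2}(\Gamma)$. Closing this gap requires exploiting both the full smoothness of $\Gamma(t)$ and the elimination of the constant-mode direction through the $H_*$ restriction; this is precisely the content we take as input from the cited reference for use in the subsequent stability analysis.
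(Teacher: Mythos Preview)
The paper supplies no proof of this proposition; it is quoted as a citation to McLean and used as a black-box input in the stability argument of Proposition~\ref{prop4.5}. So there is nothing in the paper to compare your sketch against, but the sketch is worth assessing on its own.

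Your arguments for self-adjointness (symmetry of $G$ plus Fubini) and for strict positivity on $H_*$ (writing $\langle\nu,A\nu\rangle$ as the Dirichlet energy of the single-layer potential, with the mean-zero constraint killing the logarithmic far-field term so that $\nabla u_\nu\in L^2(\mathbb{R}^2)$) are the standard ones and are correct.

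Where you should be more skeptical is the final step. You correctly flag that the natural energy norm for $A$ is $H^{-1/2}(\Gamma)$, not $L^2(\Gamma)$, but you then suggest the gap can be closed by smoothness of $\Gamma$ and the $H_*$ restriction. It cannot: the uniform bound $\langle\nu,A\nu\rangle\ge\zeta\|\nu\|_{L^2(\Gamma)}^2$ over all of $H_*$ is false for the logarithmic single-layer operator. On the unit circle $A$ diagonalizes in the Fourier basis with eigenvalues $(2|n|)^{-1}$ on mode $n\neq 0$, so $\langle e_n,Ae_n\rangle/\|e_n\|_{L^2}^2\to 0$ as $|n|\to\infty$; no positive $\zeta$ exists. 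What McLean's analysis actually delivers is coercivity in $H^{-1/2}(\Gamma)$, and neither curve smoothness nor the removal of the constant mode upgrades this to $L^2$. The proposition as written in the paper therefore overstates the cited result; your positivity argument is the most one can salvage at the level of $H_*\subset L^2$, and the quantitative lower bound only holds in the weaker Sobolev norm.
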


With this result, we now state the invertibility and positivity condition for the matrix $\mathbf{A}$. 

\begin{prop}\label{prop4.5} Let the assumptions in Proposition~\ref{prop4.2} to be valid. With probability higher than $1-\frac{25}{N}$, the matrix $\mathbf A$ is invertible when the coercivity constant of $A$, 
\[
\zeta> C^{-1}(\log N)^{-1}\bar{\delta}(N), 
\]
for some constant $C>0$, and,
\[
\bar{\delta}(N) = \mathcal{O}\left(\frac{\big(\log N\big)^{\ell+2}}{N^{\ell-\frac{3}{2}}}\right) + \mathcal{O}\left(\left(\frac{\log(N)}{N}\right)^{\frac{1}{2}}\right)
\]
which decays to zero as $N\to \infty$.
\end{prop}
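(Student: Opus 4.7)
The plan is to prove invertibility of $\mathbf{A}$ by showing it induces a strictly positive quadratic form in a suitable discrete norm, by transferring McLean's continuous coercivity on $H_*$ to the discrete level through the consistency estimate in Proposition~\ref{prop4.2} and Monte-Carlo-type concentration for uniform samples on $\Gamma$. The strategy splits into three pieces: associate a smooth function $\nu$ to any discrete vector $\mathbf{v}$, plug $\nu$ into the coercivity inequality, and control the discretization and sampling errors uniformly in $\mathbf{v}$.

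First, I would associate to any vector $\mathbf{v}\in\mathbb{R}^N$ with discrete mean zero (consistent with the range $H_*$ on which coercivity holds) a smooth extension $\nu\in H_*$ with $\nu(\mathbf{x}_i)=v_i$, built naturally from the GMLS interpolation framework already developed in Section~\ref{sec3}. The key quantitative output needed here is a pair of stability estimates of the form
\[
\|\nu\|_{L^2(\Gamma)}^2 \asymp \tfrac{|\Gamma|}{N}\sum_i v_i^2, \qquad \|\nu\|_{C^3(\Gamma)} \lesssim (\log N)^{1/2}\|\nu\|_{L^2(\Gamma)},
\]
where the second (inverse) inequality on scattered data is precisely what produces the $(\log N)^{-1}$ factor in the hypothesis on $\zeta$.

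Second, I would apply Proposition~\ref{prop4.2} to write $(\mathbf A\mathbf v)_i = A\nu(\mathbf x_i)+\epsilon_i$ with $\|\epsilon\|_{\ell^\infty}\leq C\|\nu\|_{C^3}\,\delta(N)$ where $\delta(N)=(\log N)^{\ell+2}/N^{\ell-2}+(\log N/N)^q$. Then Cauchy--Schwarz on the cross term gives
\[
\Big|\tfrac{|\Gamma|}{N}\sum_i v_i\,\epsilon_i\Big| \leq |\Gamma|\,\|\mathbf v\|_{N,2}\,\|\epsilon\|_{N,2} \leq C\,\|\nu\|_{C^3}\,\delta(N)\,\|\mathbf v\|_{N,2}^2/\sqrt{N},
\]
which is exactly the origin of the $(\log N)^{\ell+2}/N^{\ell-3/2}$ term in $\bar\delta(N)$. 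A Hoeffding-type bound for uniform samples (adding the two extra $1/N$ probability events that upgrade $23/N$ to $25/N$) then yields
\[
\Big|\tfrac{|\Gamma|}{N}\sum_i \nu(\mathbf x_i)A\nu(\mathbf x_i) - \langle\nu,A\nu\rangle\Big| + \Big|\tfrac{|\Gamma|}{N}\sum_i \nu(\mathbf x_i)^2 - \|\nu\|_{L^2}^2\Big| \leq C(\log N/N)^{1/2}\|\nu\|_{L^\infty}^2,
\]
which produces the remaining $(\log N/N)^{1/2}$ term in $\bar\delta(N)$.

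Chaining these bounds with McLean's coercivity $\langle\nu,A\nu\rangle\geq \zeta\|\nu\|_{L^2}^2$ and the norm equivalence of Step~1 gives
\[
\mathbf v^\top \mathbf A\mathbf v \;\geq\; C\bigl((\log N)\,\zeta - \bar\delta(N)\bigr)\,\|\mathbf v\|_{N,2}^2,
\]
which is strictly positive, and hence $\mathbf A$ is invertible, as soon as $\zeta > C^{-1}(\log N)^{-1}\bar\delta(N)$. The main obstacle I expect is Step~1: producing the extension $\nu$ and proving the two norm equivalences with sharp logarithmic constants. This is where the logarithmic amplification of the coercivity constant originates, and it must be done in a way that is uniform over all admissible $\mathbf v$. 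A secondary subtlety is the treatment of the discrete mean-zero constraint, since continuous coercivity fails on constants; one must verify either that the discrete constant vector is handled by a rank-one correction, or that the natural solvability condition on the right-hand side $\mathbf b$ makes the invertibility on the zero-mean subspace sufficient for the BIE.
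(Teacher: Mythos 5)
Your proposal follows the same high-level architecture as the paper's proof: extend the discrete vector to a function on $\Gamma$, pull back McLean's coercivity, and transfer it to the discrete form $\mathbf{v}^\top\mathbf{A}\mathbf{v}$ using a Hoeffding-type concentration bound together with the consistency estimate of Proposition~\ref{prop4.2}. The decomposition of $\bar\delta(N)$ into a consistency piece (producing $(\log N)^{\ell+2}/N^{\ell-3/2}$ via a $1/\sqrt{N}$ Cauchy--Schwarz factor) and a Monte Carlo piece (producing $(\log N/N)^{1/2}$) matches the paper, as does the probability bookkeeping and the discrete zero-mean constraint.

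The genuine gap is in your Step~1. The inverse inequality $\|\nu\|_{C^3(\Gamma)}\lesssim(\log N)^{1/2}\|\nu\|_{L^2(\Gamma)}$ for a scattered-data GMLS interpolant is false, and not merely by a constant: inverse inequalities for such interpolants scale as negative powers of the separation distance $q_{X,\Gamma}$, and on a one-dimensional manifold passing from $L^2$ to $C^3$ costs roughly $q_{X,\Gamma}^{-7/2}$, which for uniform random samples with $q_{X,\Gamma}\sim N^{-2}$ is a large polynomial power of $N$. A data vector that alternates sign between adjacent nodes already makes this unavoidable. Since you explicitly make this inequality the origin of the $(\log N)^{-1}$ factor in the hypothesis on $\zeta$, the argument as proposed does not close. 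The paper sidesteps the issue by choosing a piecewise-linear nodal basis $\{\eta_i\}$ with the Kronecker-delta property rather than a smooth GMLS extension, so that no $C^3$ control of the extension is ever required. Coercivity is then pulled back through the mass matrix $\mathbf{M}_{ij}=\int_\Gamma\eta_i\eta_j\,dS$, using the lower bound $\lambda_{\min}(\mathbf{M})\geq q_{X,\Gamma}/6$ from the tridiagonal eigenvalue estimate in \cite{kulkarni1999eigenvalues}, and the condition on $\zeta$ emerges from $\frac{1}{N}\mathbf{V}_n^\top\mathbf{A}\mathbf{V}_n\geq(\zeta Cq_{X,\Gamma}-N\bar\delta(N))\|\mathbf{V}_n\|^2$ together with the relation between $q_{X,\Gamma}$ and the fill distance $h_{X,\Gamma}\sim N^{-1}\log N$. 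If you want to repair your route, replacing the smooth GMLS extension by a local nodal basis with the Kronecker-delta property and arguing through a mass-matrix eigenvalue bound, instead of a $C^3$ inverse inequality, is the fix.
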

\begin{proof}
Let $\{\eta_i(\mathbf{x})\}_{i=1}^n$ be linear hat functions defined on a parameterization $\iota: I \subset \mathbb{R} \to \Gamma$ such that $\iota(s) = \mathbf{x}$. That is, for all $\mathbf{x}_i \in \Gamma$,
\[
\eta_i (\mathbf{x}) = \eta_i \big(\iota(s)\big) = \begin{cases}\frac{s-s_{i-1}}{s_i-s_{i-1}}, & \mbox{for } s \in \left[\iota^{-1}(\mathbf{x}_{i-1}),\iota^{-1}(\mathbf{x}_{i})\right] \\ \frac{s_{i+1}-s}{s_{i+1}-s_i}, & \mbox{for } s \in \left[\iota^{-1}(\mathbf{x}_{i}),\iota^{-1}(\mathbf{x}_{i+1})\right]  \end{cases}. 
\]
For any $V_n \in L^2(\Gamma)$, we can write
\[
V_n(\mathbf x) = \sum_{i=1}^N  V_n(\mathbf{x}_i) \eta_i(\mathbf x).
\]
To be in $H^*$, we require that $V_n(\mathbf{x}_i)$ satisfies,
\[
\sum_{i=1}^N V_n(\mathbf{x}_i) m_i = 0, \quad m_i = \int_\Gamma \eta_i(\mathbf{x})\, dS_{\mathbf{x}},
\]
which is obtained by imposing the \emph{discrete zero-mean constraint}. Then,
\BEA
\zeta \langle V_n, V_n \rangle = \zeta \sum_{i,j=1,\ldots, N} V_n(\mathbf{x}_i) V_n(\mathbf{x}_j) \int_{\Gamma} \eta_i(\mathbf{x})\eta_j(\mathbf{x})dS_{\mathbf{x}} = \zeta \mathbf{V}_n^\top \mathbf{M} \mathbf{V}_n \geq \zeta \lambda_{min}(\mathbf{M}) \|\mathbf{V}_n\|^2,\label{lowerbound}
\EEA
where
\[
\mathbf{M}_{ij} = \int_\Gamma \eta_i(\mathbf{x})\eta_j(\mathbf{x}) dS_{\mathbf{x}},
\]
which is SPD. Letting $\iota(s)= \mathbf{x}_0 + \mathbf t(\mathbf{x}_0) s + \mathbf n(\mathbf{x}_0) p(s)$, denoting a parameterization of $\Gamma$ at $\mathbf{x}_0$, one can write
\BEA
\mathbf{M}_{ij} &=& \int_{\iota^{-1}(\Gamma)}\eta_i\big(\iota(s)\big)\eta_j\big(\iota(s)\big) \sqrt{1+ \big(p'(s)\big)^2} \chi_{\{\eta_i(\Gamma) \cap \eta_j(\Gamma) \neq 0\} } (s)ds  \notag\geq \mathbf{L}_{ij} \notag \\ &=&  \int_{\iota^{-1}\big(\chi_{\{\eta_i(\Gamma) \cap \eta_j(\Gamma) \neq 0\} }\big)}\eta_i\big(\iota(s)\big)\eta_j\big(\iota(s)\big)ds \geq  \int_{-q_{X,\Gamma}}^{q_{X,\Gamma}} \eta_i\big(\iota(s)\big)\eta_j\big(\iota(s)\big)ds = \begin{cases} \frac{2}{3}q_{X,\Gamma} & \mbox{if }i=j \\  \frac{1}{4}q_{X,\Gamma}  & \mbox{if } |i-j| = 1, \\ 0, & \mbox{otherwise,} \end{cases} \notag
\EEA
where $q_{X,\Gamma} = \min_{i,j} d_g(\mathbf{x}_i,\mathbf{x}_j)$ denotes the separation distance. It is easy to check that $\mathbf{M}-\mathbf{L}$ is symmetric positive semi-definite. Thus, 
\BEA
\lambda_{min}(\mathbf{M}) \geq \lambda_{min}(\mathbf{L}) = \frac{2}{3}q_{X,\Gamma} + \frac{2}{4} q_{X,\Gamma} \cos\left(\frac{N\pi}{N+1}\right) \geq \frac{q_{X,\Gamma}}{6},\label{lambdamin}
\EEA
with probability higher than $1-\frac{1}{N}$, where we used Theorem~2.2 in \cite{kulkarni1999eigenvalues}.

 Since $V_n$ is continuous and bounded by the assumption in Proposition~\ref{prop4.2}, it is clear that $AV_n$ is also bounded (see \cite{kress1999linear}). This implies that $|V_n(x) AV_n(x)| \leq \tilde{C} < \infty$. By Hoeffding's inequality, with probability higher than $1-\frac{1}{N}$,
\[
\langle \eta_j,A\eta_i \rangle  - \frac{1}{N} \sum_{k=1}^N \eta_j(\mathbf{x}_k)A\eta_i(\mathbf{x}_k) \leq  \tilde{C}\left(\frac{\log N}{N}\right)^{1/2}, 
\]
for i.i.d. samples $X= \{\mathbf{x}_i\}_{i=1,\ldots, N} \subset \Gamma$.
By Proposition~\ref{prop4.2}, with probability higher than $1-\frac{24}{N}$
\BEA
\langle \eta_j,A\eta_i \rangle  &\leq& \frac{1}{N} \sum_{k=1}^N \eta_j(\mathbf{x}_k)A\eta_i(\mathbf{x}_k) + \tilde{C}\left(\frac{\log N}{N}\right)^{1/2} \notag \\ &\leq& \frac{1}{N} \sum_{k=1}^N \eta_j(\mathbf{x}_k)(\mathbf{A\eta}_i)_k + \underbrace{\frac{1}{N}\sum_{k=1}^N \eta_j(\mathbf{x}_k)\epsilon(N)+ \tilde{C} \left(\frac{\log N}{N}\right)^{1/2}}_{=\delta(N)}, \notag 
\EEA
where we have denoted the upper bound in Proposition~\ref{prop4.2} as, $A\eta_i(\mathbf{x}_k) - (\mathbf{A}\boldsymbol{\eta}_i)_k  \leq \epsilon(N)$. Thus,
\[
\delta(N) \leq \bar{\delta}(N) =  \frac{1}{\sqrt N}\|\boldsymbol{\eta}\|^2 \epsilon(N)  + \tilde{C} \left(\frac{\log N}{N}\right)^{1/2} =  \mathcal{O}\left(\frac{\big(\log N\big)^{\ell+2}}{N^{\ell-\frac{3}{2}}}\right) + \mathcal{O}\left(\frac{\log(N)}{N}\right)^{\min\{\frac{1}{2},q+\frac{1}{2}\}}.
\]
This implies that,
\BEA
\langle V_N,AV_N \rangle &=& \sum_{i,j = 1}^N V_n(\mathbf{x}_i)V_n(\mathbf{x}_j) \langle \eta_j,A\eta_i \rangle  \notag \\ &\leq& \sum_{i,j = 1}^N \frac{1}{N} \sum_{k=1}^N V_n(\mathbf{x}_j) \eta_j(\mathbf{x}_k)(\mathbf{A\eta}_i)_k V_n(\mathbf{x}_i) + \sum_{i,j = 1}^N V_n(\mathbf{x}_i)V_n(\mathbf{x}_j) \bar{\delta}(N) \notag \\
&=& \frac{1}{N}\sum_{k=1}^N V_n(\mathbf{x}_k)(\mathbf{AV}_n)_k + \sum_{i,j = 1}^N V_n(\mathbf{x}_i)V_n(\mathbf{x}_j) \bar{\delta}(N)\leq\frac{1}{N}\sum_{k=1}^N V_n(\mathbf{x}_k)(\mathbf{AV}_n)_k + N \|\mathbf{V}_n\|^2\bar{\delta}(N),\notag\label{bilinear}
\EEA
where we applied the Cauchy-Schwartz inequality. Combining \eqref{lowerbound}, \eqref{lambdamin}, and \eqref{bilinear},
\BEA
\frac{1}{N}\mathbf{V}_n^\top \mathbf{A}\mathbf{V}_n \geq \big(\zeta Cq_{X,\Gamma} - N\bar{\delta}(N)\big)\|\mathbf{V}_n\|^2,\label{discrete_coercive}
\EEA
which is positive definite when $\zeta > C^{-1}Nq_{X,\Gamma}^{-1}\bar{\delta}(N)$. By \eqref{maxh}, with probability higher than $1-\frac{1}{N}$, $q_{X,\Gamma}\leq h_{X,\Gamma} \leq N^{-1}\log(N)$, then it is clear that, $\zeta > C^{-1}(\log N)^{-1}\bar{\delta}(N)$.
Therefore, the matrix $\mathbf{A}$ is positive and it is, therefore, invertible. The statement of the theorem is complete by counting the probability.
%\[
%\langle \phi, A \phi \rangle =V^T M A_n V +O(\varepsilon(N) \|V\|^2) \ge \gamma \|\phi\|_{L^2(\Gamma)}^2 = \gamma V^T M V\geq \gamma\lambda_{\min}(M)\|V\|^2.
%\]
%For sufficiently fine discretization (small $\varepsilon(n)$):
%\[
%V^T M A_n V \ge (\gamma \lambda_{\min}(M) - C \varepsilon(N)) \|V\|^2 > 0, \quad \forall V \neq 0, \; V \text{ satisfies zero-mean}.
%\]
\end{proof}

With Propositions~\ref{prop4.2}, \ref{prop4.3} and \ref{prop4.5}, we conclude the following convergence result for the system $\mathbf A\mathbf {\tilde{V}}_n=\mathbf b$. Particularly, we deduce the error of $\tilde{\mathbf{V}}_n$ as a discrete approximation to $\mathbf{V}_n$, whose components are defined as $(\mathbf{V}_n)_i = V_n(\mathbf{x}_i)$ that satisfy $AV_n(\mathbf{x}_i)=b(\mathbf{x}_i)$ for all $i=1,\ldots, N$.

\begin{theo}
Let the assumptions in Propositions~\ref{prop4.2}, \ref{prop4.3}, and \ref{prop4.5} be valid. Then with probability higher than $1-\frac{58}{N}$,
\[
\left\|\mathbf{V}_n - \mathbf{\tilde{V}}_n\right\|  = \mathcal{O}\left(\frac{\big(\log N\big)^{\ell+2}}{N^{\ell-\frac{3}{2}}}\right)+\mathcal{O}\left(\left(\frac{(\log N)^q}{N^{q+\frac{1}{2}}}\right)\right),
\]
for all $\mathbf{x}_i \in X \subset \Gamma$, as $N\to \infty$.
\end{theo}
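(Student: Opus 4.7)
The plan is to treat this as a standard consistency + stability argument for a linear system. Starting from $\mathbf{A}\tilde{\mathbf{V}}_n = \mathbf{b}$, I would write
\[
\mathbf{A}(\mathbf{V}_n - \tilde{\mathbf{V}}_n) = \mathbf{A}\mathbf{V}_n - \mathbf{b},
\]
and insert the exact quantities $AV_n(\mathbf{x}_i)$ and $b(\mathbf{x}_i)$ to obtain the pointwise residual decomposition
\[
(\mathbf{A}\mathbf{V}_n)_i - \mathbf{b}_i = \bigl[(\mathbf{A}\mathbf{V}_n)_i - AV_n(\mathbf{x}_i)\bigr] + \bigl[b(\mathbf{x}_i) - \mathbf{b}_i\bigr],
\]
where I have used the fact that $V_n$ satisfies $AV_n(\mathbf{x}_i) = b(\mathbf{x}_i)$ exactly for every $i$. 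Both brackets are pure consistency errors, already controlled by Proposition~\ref{prop4.2} and Proposition~\ref{prop4.3}.

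Next I would pass from the pointwise bound to an $\ell^2$ bound on the residual vector via $\|\mathbf{r}\|_2 \leq \sqrt{N}\,\|\mathbf{r}\|_\infty$. Applying Propositions~\ref{prop4.2} and \ref{prop4.3} componentwise gives
\[
\|\mathbf{A}\mathbf{V}_n - \mathbf{b}\|_2 \leq \sqrt{N}\,\max_i\,\bigl|(\mathbf{A}\mathbf{V}_n)_i - \mathbf{b}_i\bigr| = \mathcal{O}\!\left(\frac{(\log N)^{\ell+2}}{N^{\ell-5/2}}\right) + \mathcal{O}\!\left(\frac{(\log N)^q}{N^{q-1/2}}\right),
\]
with the probabilities from the two propositions combined by a union bound.

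For stability I would invoke the discrete coercivity bound \eqref{discrete_coercive} from Proposition~\ref{prop4.5}. Under the hypothesis on $\zeta$ there, the smallest eigenvalue of $\mathbf{A}$ satisfies $\lambda_{\min}(\mathbf{A}) \geq c\, N$ for a positive constant $c$ (with high probability), so $\|\mathbf{A}^{-1}\|_2 \leq (cN)^{-1}$. Combining with the residual estimate,
\[
\|\mathbf{V}_n - \tilde{\mathbf{V}}_n\|_2 \leq \|\mathbf{A}^{-1}\|_2\,\|\mathbf{A}\mathbf{V}_n - \mathbf{b}\|_2 \leq \frac{1}{c\sqrt{N}}\,\max_i\,\bigl|(\mathbf{A}\mathbf{V}_n)_i - \mathbf{b}_i\bigr|,
\]
which, after substituting the pointwise bound, produces exactly the stated rate
\[
\mathcal{O}\!\left(\frac{(\log N)^{\ell+2}}{N^{\ell-3/2}}\right) + \mathcal{O}\!\left(\frac{(\log N)^{q}}{N^{q+1/2}}\right).
\]
Finally I would add up the failure probabilities: up to $23/N$ from Proposition~\ref{prop4.2}, $10/N$ from Proposition~\ref{prop4.3}, and $25/N$ from Proposition~\ref{prop4.5}, totaling $58/N$ by a union bound.

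The main obstacle is the stability step: the coercivity in Proposition~\ref{prop4.5} is phrased for vectors satisfying the discrete zero-mean constraint, whereas $\mathbf{V}_n - \tilde{\mathbf{V}}_n$ need not lie in that subspace a priori. I would handle this either by decomposing the error into its zero-mean part plus a constant (the constant part is controlled by solvability of the continuous problem under the same constraint and by consistency of the discrete constraint), or by appealing directly to the positive-definiteness established at the end of Proposition~\ref{prop4.5} on the full space once the compatibility condition on $\mathbf{b}$ is transferred through the consistency estimate in Proposition~\ref{prop4.3}. A secondary but purely bookkeeping issue is tracking the $\sqrt{N}$ gain/loss so that the final exponent on $N$ is $\ell-\tfrac{3}{2}$ rather than $\ell-\tfrac{5}{2}$; this is where the $N$-scaling of $\lambda_{\min}(\mathbf{A})$ from \eqref{discrete_coercive} and the $\sqrt{N}$ from the $\ell^\infty\to \ell^2$ passage combine cleanly.
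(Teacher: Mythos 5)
Your proposal follows essentially the same route as the paper's proof: write $\mathbf{A}\mathbf{E} = \mathbf{A}\mathbf{V}_n - \mathbf{b}$ with $\mathbf{E}=\mathbf{V}_n-\tilde{\mathbf{V}}_n$, insert the exact identities $AV_n(\mathbf{x}_i)=b(\mathbf{x}_i)$ to split the residual into the two consistency errors of Propositions~\ref{prop4.2} and \ref{prop4.3}, pass from $\ell^\infty$ to $\ell^2$ at a cost of $\sqrt{N}$, invert $\mathbf{A}$ using the stability from Proposition~\ref{prop4.5} at a gain of $N$, and union-bound the probabilities to $58/N$. The exponent bookkeeping and the final rate match.

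One small packaging difference is worth flagging. You go through $\lambda_{\min}(\mathbf{A})\ge cN$, hence $\|\mathbf{A}^{-1}\|_2\le (cN)^{-1}$, and then $\|\mathbf{E}\|\le\|\mathbf{A}^{-1}\|\,\|\mathbf{A}\mathbf{E}\|$. The paper instead keeps the bilinear form and applies Cauchy--Schwarz directly: $\|\mathbf{E}\|^2 \le (\tilde\zeta N)^{-1}\mathbf{E}^\top\mathbf{A}\mathbf{E} \le (\tilde\zeta N)^{-1}\|\mathbf{E}\|\,\|\mathbf{A}\mathbf{E}\|$, which yields $\|\mathbf{E}\|\le(\tilde\zeta N)^{-1}\|\mathbf{A}\mathbf{E}\|$ without ever discussing $\|\mathbf{A}^{-1}\|$. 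The two chains land in the same place numerically, but the paper's version is slightly safer: passing from a positive bilinear form $\mathbf{E}^\top\mathbf{A}\mathbf{E}\ge cN\|\mathbf{E}\|^2$ to the spectral-norm bound on $\mathbf{A}^{-1}$ requires $\mathbf{A}$ to be symmetric, which the discretization does not guarantee (the quadrature stencil treats $\mathbf{x}_i$ and $\mathbf{x}_j$ asymmetrically). If you want to keep your phrasing you should either symmetrize, or replace the $\lambda_{\min}$ step with the Cauchy--Schwarz sandwich.

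Your last paragraph identifies a genuine issue that the paper's own proof does not address. The coercivity inequality \eqref{discrete_coercive} in Proposition~\ref{prop4.5} is derived from the continuous coercivity of $A$ on $H_*$, which only applies to $V_n$ satisfying the discrete zero-mean constraint $\sum_i V_n(\mathbf{x}_i)m_i=0$; yet the theorem's proof applies it to $\mathbf{E}=\mathbf{V}_n-\tilde{\mathbf{V}}_n$ with no check that $\mathbf{E}$ lies in that subspace. You are right to flag this, and the resolutions you sketch (project out the constant component, or verify discrete compatibility of $\mathbf{b}$ through Proposition~\ref{prop4.3}) are the natural fixes. The published proof silently elides this, so your observation strengthens rather than deviates from the argument.
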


\begin{proof}
Define $\bar{\zeta} = \zeta C h - N \bar{\delta}(N) > 0$. Let $\mathbf{E} := \mathbf{V}_n - \mathbf{\tilde{V}}_n$, then with probability higher than $1-\frac{25}{N}$,
\[
\left\|\mathbf{E}\right\|^2 \leq \frac{1}{\tilde{\zeta}N} \mathbf{E}^\top \mathbf{AE} \leq \frac{1}{\tilde{\zeta}N}\|\mathbf{E}\| \|\mathbf{AE}\|.
\]
We note that for any $i=1,\ldots,N$, with probability higher than $1-\frac{58}{N}$,
\BEA
|(\mathbf{AE})_{i}| &=& \left|(\mathbf{AV}_n)_{i} -  (\mathbf{A\tilde{V}}_n)_{i} - (AV_n)(\mathbf{x}_i) + (AV_n)(\mathbf{x}_i)\right| \leq   \left|(\mathbf{AV}_n)_{i} -(AV_n)(\mathbf{x}_i)\right| + \left|b(\mathbf{x}_i) - \mathbf{b}_{i}\right| \notag \\
&=& \mathcal{O}\left(\frac{\big(\log N\big)^{\ell+2}}{N^{\ell-2}}\right)+\mathcal{O}\left(\left(\frac{\log N}{N}\right)^{q}\right),
\EEA
as $N\to \infty$. So,
\[
\|\mathbf{E}\| \leq \frac{1}{\tilde{\zeta}N}\|\mathbf{AE}\| = \frac{1}{\tilde{\zeta} N^{1/2}} \left(\mathcal{O}\left(\frac{\big(\log N\big)^{\ell+2}}{N^{\ell-2}}\right)+\mathcal{O}\left(\left(\frac{\log N}{N}\right)^{q}\right)\right),
\]
which vanishes as $N\to \infty$.
\end{proof}

\section{Numerical Experiments}\label{sec5}
In this section, we present numerical experiments for \eqref{BIM} on a variety of smooth closed curves to illustrate the effectiveness of the proposed algorithms. Convergence studies are carried out for the circular case, where the exact solution is available. For simplicity, we use $k=\sqrt{N}$ (number of KNN to be used, chosen to be an odd number) where $N$ is the number of sampling points and degree $\ell = 6$ polynomial with uniformly sampled initial data points in the GMLS approximation for all examples.

\subsection{Free boundary equation on circle}
First, we consider the free boundary equation with a prescribed time-dependent forcing term, $h(t)$,
\begin{equation}\label{eq:FBP_circle}
\left\{
\begin{array}{rcll}
-\Delta p &=& 0& \text{in }\, \Omega(t), \\
p &=& \kappa & \text{on }\, \Gamma(t), \\
\frac{\partial p}{\partial \mathbf n} &=& -V_n +h(t)& \text{on }\, \Gamma(t),
\end{array}
\right.
\end{equation}
where $h(t) = 500\cos(500\pi t)$.

Then, integrating the equations on the free boundary leads to the following system,
\begin{equation}\label{eqn:BIE_circle}
\left\{
\begin{aligned}
&\int_{\Gamma}G(\mathbf{x},\mathbf{y})\Big(V_n(\mathbf y) -h(t)\Big)dS_\mathbf{y}=-\frac{\kappa(\mathbf{x})}{2} -\int_\Gamma \kappa(\mathbf{y})\frac{\partial G(\mathbf{x},\mathbf{y})}{\partial \mathbf{n}(\mathbf y)}dS_\mathbf{y}  ,\quad \mathbf{x},\mathbf{y} \in \Gamma(t),\\
&\frac{d\mathbf x}{dt} = V_n(\mathbf x)\mathbf n(\mathbf x), \quad \mathbf x\in\Gamma(t),
\end{aligned}
\right.
\end{equation}

With the initial condition given by the unit circle, 
\begin{equation}\label{eq:IC1}
\mathbf x(0)=\Big(x_1(0),x_2(0)\Big)=\Big(\cos(\theta),\sin(\theta)\Big),\quad \theta\in[0,2\pi],
\end{equation}
The solution at any time $t$ is given by
\begin{equation*}
\begin{aligned}
    R(t) &= 1 + \frac{1}{\pi}\sin(500\pi t),\\
    \mathbf x(t)&=\Big(x_1(t),x_2(t)\Big)=\Big(R(t)\cos(\theta),R(t)\sin(\theta)\Big),\quad\theta\in[0,2\pi],
    \end{aligned}
\end{equation*}
where $R(t)$ represents the radius of circle. Then we conduct the mesh refinement tests to check the order of spatial convergence and temporal convergence. First, we apply our method to discretize \eqref{eqn:BIE_circle} directly on point clouds with $N=100$, $200$, $400$, $800$, and use forward Euler (\ref{sch:FE}) with time step $\Delta t=10^{-5}$. We quantify the numerical error in approximating $V_n(\mathbf x)$ with, \[\|e_V\|_{L^2}=\sqrt{\frac{1}{|\Gamma|}\int_\Gamma \|\tilde{\mathbf V}_n(\mathbf x)- V_n(\mathbf x)\|^2dS_{\mathbf x}}\approx\sqrt{\frac{1}{N}\sum_{i=1}^N \Big((\tilde{\mathbf V}_n)_i- V_n(\mathbf x_i)\Big)^2},\] 
at the time $t=10^{-3}$. In Figure \ref{fig:space-convergence}, we show the convergence results corresponding to two numerical quadrature rules, the trapezoidal and Simpson's rule, applied in approximation of $b$ (see \eqref{eqn:b_int}). As for the approximation of $A$, we consider the trapezoidal rule (with the discrete component $\mathbf{A}_{ij}$ as reported above \eqref{eqn:b_int}). For this particular example, notice that the error rate of Simpson's rule is better than that of the trapezoidal despite the fact that the operator $A$ is discretized with the same trapezoidal rule. The main reason here is that in this circular case, the first term in Eqn.~\eqref{eqn:BIE_circle} can be written as $AV_n(\mathbf{x}) = b(\mathbf{x}) + h(t)A\mathbf{1}(\mathbf{x})$, where $b(\mathbf x) = 0$ for the circle and $V_n(\mathbf{x}) = h(t)\mathbf{1}(\mathbf x)$. Denoting the approximation of $\mathbf{A} = A + \delta A$ and $\mathbf{b} = b(\mathbf x) + \delta b = \delta b$, and $\mathbf{V}_n = V_n(\mathbf x) + \delta V_n$, where the ``delta" terms denote the corresponding numerical errors, one can deduce that $(A+\delta A) \delta V_n = \delta b$. Applying the Woodbury identity, we obtain $\delta V_n=(I+A^{-1}\delta A)^{-1}A^{-1}\delta b$, which explains why the error rate is dominated by the error induced by the quadrature rule in the estimation of $b$ as observed in Figure~\ref{fig:space-convergence}. 

Next, we conduct the time convergence studies with $N=400$ and calculate the numerical solutions to $t=4\times 10^{-3}$ with various time steps. The max-norm relative errors $\|e_R\|_{\ell^\infty}$ for radius of the circle using forward Euler (\ref{sch:FE}) and second-order Runge Kutta (\ref{sch:rk2}) are reported in Figure \ref{fig:time-convergence}. It indicates the first-order accuracy in time with the forward Euler method and second-order accuracy in time with the second-order Runge-Kutta method. Also, the evolution dynamics with $N=400$, $\Delta t=10^{-5}$, and forward Euler method are shown in Figure \ref{fig:boundary-dynamics} which depicts the boundary at various times, $t=0$, $0.001$, and $0.003$. The absolute error in the radius over time is shown in Figure \ref{fig:radius-error}, where the errors exhibit oscillations with a period of approximately $4\times 10^{-3}$.

\begin{figure}[htbp]
\center
\subfigure[Spatial mesh refinement test for $V_n$.]{\includegraphics[width=0.45\linewidth]{./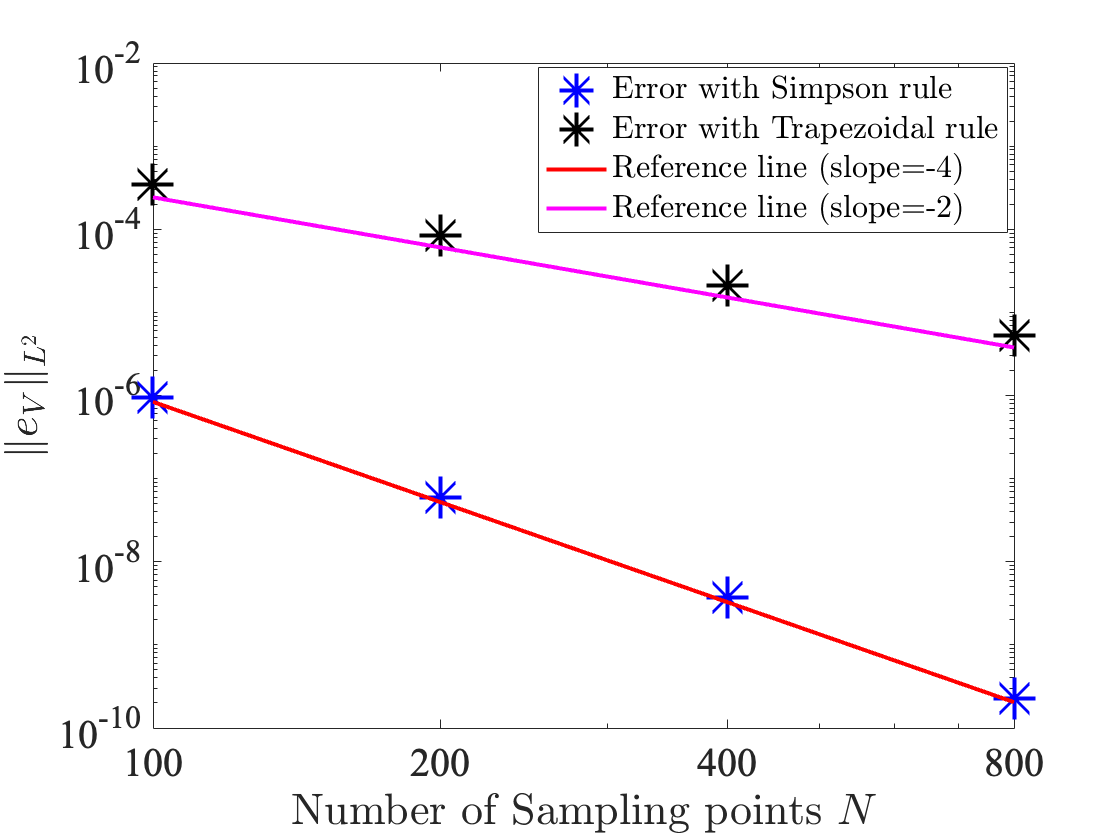}
\label{fig:space-convergence}}
\subfigure[Temporal mesh refinement test for radius of the circle.]{\includegraphics[width=0.45\textwidth]{./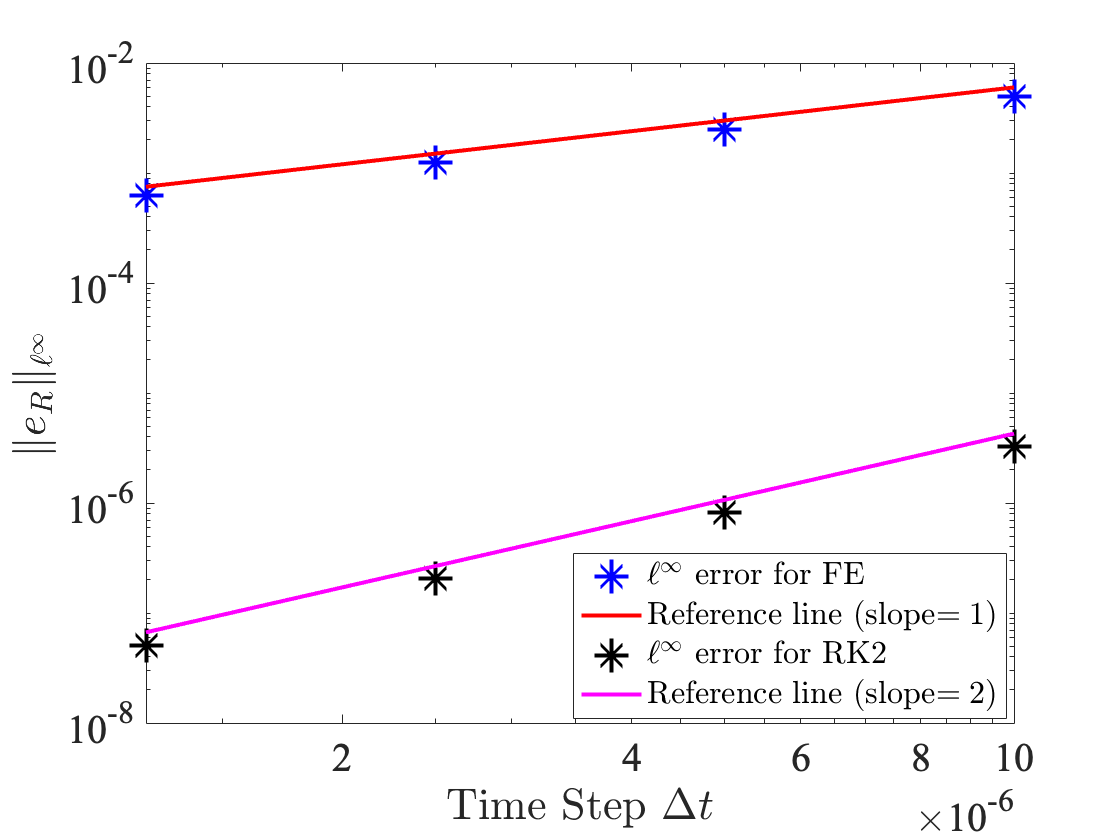}
\label{fig:time-convergence}}
\caption{Mesh refinement tests.}
\label{fig:convergence-test}
\end{figure}

\begin{figure}[htbp]
\center
\subfigure[Evolution of the circle boundary over time.]{\includegraphics[width=0.45\textwidth]{./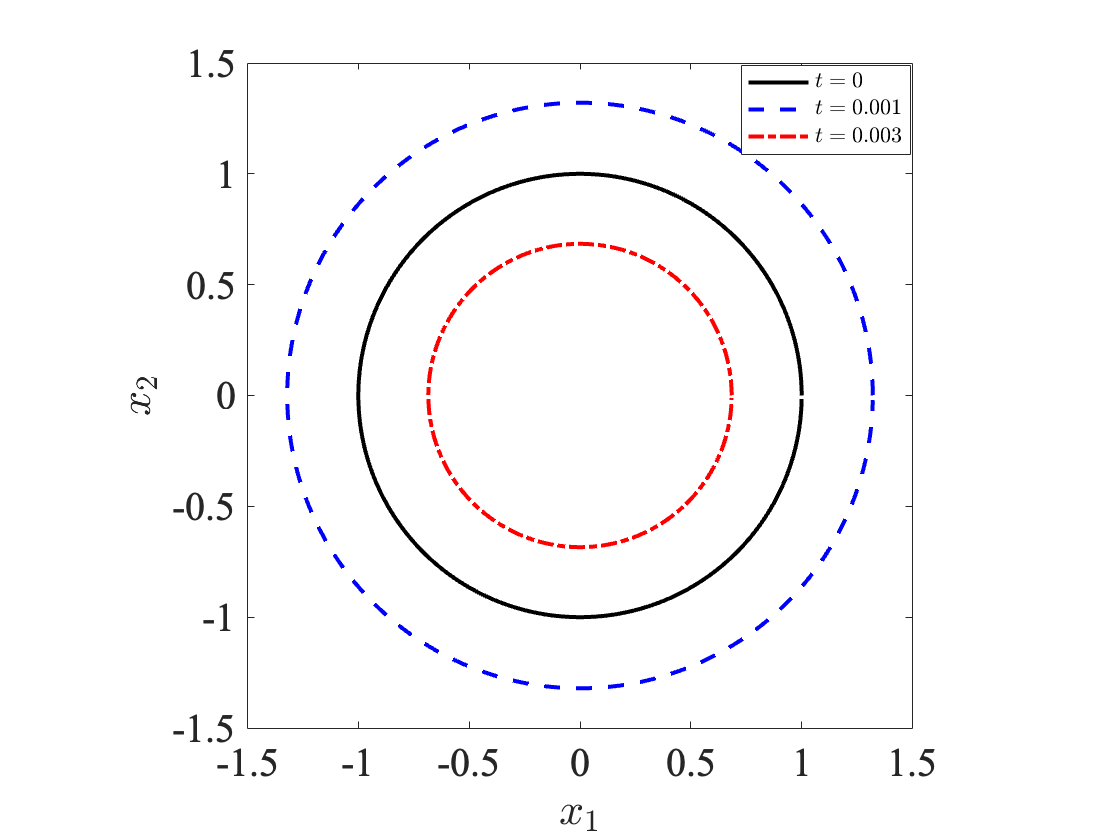}\label{fig:boundary-dynamics}}
\subfigure[Error for the circle radius over time.]{\includegraphics[width=0.45\textwidth]{./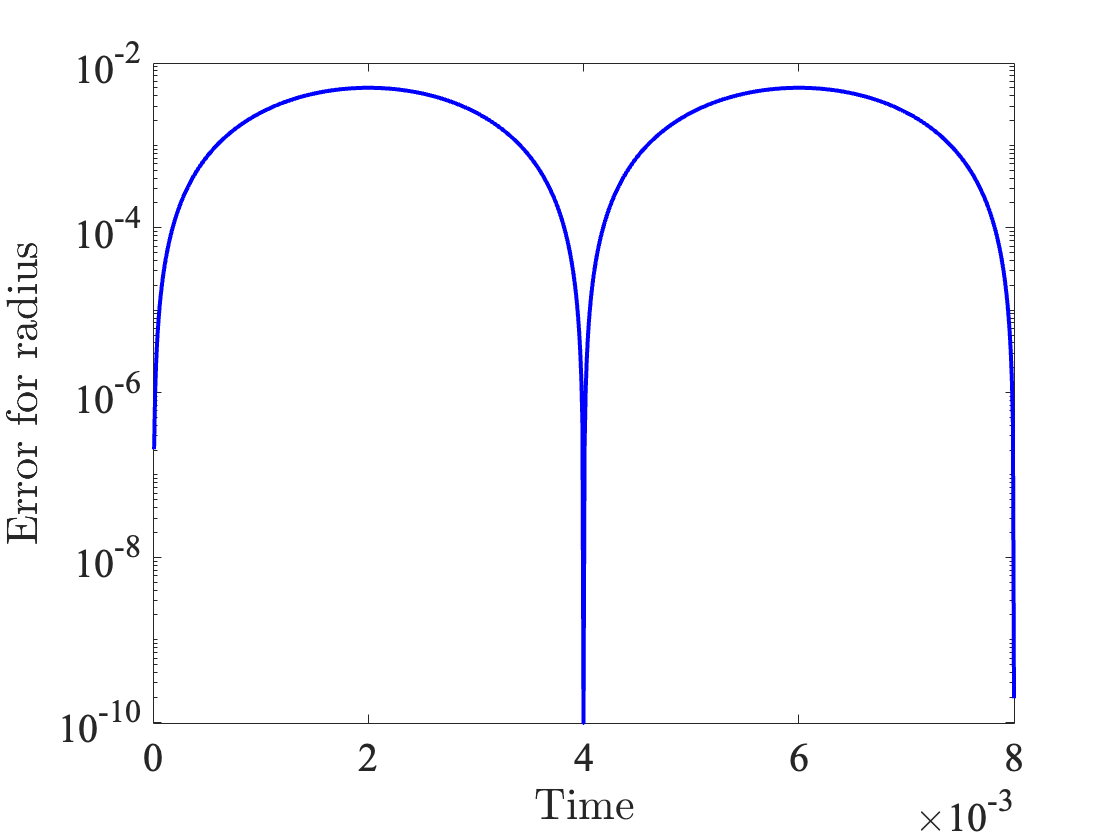}\label{fig:radius-error}}
\caption{The evolution dynamics for the circular case. In (a), the profiles of $\mathbf x(t)$ at $t=0$, $0.001$, $0.003$ are shown; (b) The errors for radius $R(t)$ are plotted at a time interval $[0,8\times 10^{-3}]$.}
\label{fig:circle}
\end{figure}

{\color{black} Finally, we examine the time step constraint for numerical stability. With the same initial condition \eqref{eq:IC1} given by the unit circle, to ensure that the boundary spacing $\frac{2\pi R(t)}{N}$ remains constant in time (i.e., the boundary remains a unit circle), we consider the free boundary equation \eqref{eq:FBP_circle} without the prescribed time-dependent forcing term, that is, $h(t)=0$, which is equivalent to the free boundary equation \eqref{BIM}. For different number of boundary points, $N=100$, $200$, $400$, $800$, we identify the largest time step $\Delta t_{\max}^{\mathrm{stable}}$ using the Simpson's rule for numerical integration and Forward Euler method for time discretization, for which the numerical solution remains stable (i.e., $\|e_V\|_{L^2}\leq 10^{-2}$) up to a fixed final time $t=0.1$. The results in Table~\ref{tab:timestep_constraint} show that finer boundary discretizations require smaller time steps for stable computations. We note that the largest stable time step may vary for different geometries.
\begin{table}[htbp]
\centering
{\color{black}
\caption{Empirical time step constraint for ensuring stability of the forward Euler method.}
\label{tab:timestep_constraint}
\begin{tabular}{|c|c|}
\hline
Number of boundary points $N$  &
Largest stable time step $\Delta t_{\max}^{\mathrm{stable}}$ \\
\hline
100 & $3.0\times10^{-4}$ \\
\hline
200 & $1.0\times10^{-4}$ \\
\hline
400 & $3.0\times10^{-5}$ \\
\hline
800 & $9.5\times10^{-6}$ \\
\hline
\end{tabular}}
\end{table}
}

\subsection{Free boundary equation on perturbations of a circle}\label{circle}
In this section, we consider the free boundary equation \eqref{BIM} with the initial condition given by perturbations of a circle,
\begin{equation}\label{eqn:perturbed-circle}
\begin{aligned}
    r(\theta) &=r_0+D_1\cos(D_2\theta),\\
\mathbf x(0)&=\Big(x_1(0),x_2(0)\Big)=\Big(r(\theta)\cos(\theta),r(\theta)\sin(\theta)\Big),\quad \theta\in[0,2\pi],
\end{aligned}
\end{equation}
where $D_1$ and $D_2$ are parameters. First, we can observe that the area of $\Omega(t)$ is conserved for \eqref{hele-shaw} with $f=0$,

\[
\frac{d}{dt} |\Omega(t)| = \int_{\Gamma} V_n(\mathbf x) dS_{\mathbf x}=- \int_{\Gamma} \frac{\partial p(\mathbf x)}{\partial \mathbf n(\mathbf x)} \, dS_{\mathbf x}=-\int_{\Omega} \Delta p \, d\Omega=0.
\]
Thus the area of the steady-state circle with the initial condition given in \eqref{eqn:perturbed-circle} is
\[\text{Area} = \frac{1}{2} \int_0^{2\pi} \Big( r_0 + D_1\cos(D_2\theta) \Big)^2 \, d\theta
= \pi r_0^2 + \frac{\pi}{2} D_1^2,\]
which implies the radius of the steady-state circle is
$r_s=(r_0^2+D_1^2/2)^{1/2}$.

In Fig~\ref{fig:perturbed-circle}, we show the numerical result for the case of $D_1=0.3$, and $D_2=5$. See Appendix \ref{SM4} for results with other values of $D_1$ and $D_2$. In this experiment, we apply the trapezoidal rule to approximate $A$, Simpson's rule to approximate $b$, and the forward Euler method with $\Delta t=10^{-5}$ and $N=400$, and $r_0=1$. We point out that in this case, we redistribute the mesh points onto an equispaced mesh using cubic spline interpolation to prevent point collapse during boundary evolution. The results are summarized in Figure \ref{fig:perturbed-circle}. Panel (a) shows the boundary evolution dynamics for different perturbations of the circle in \eqref{eqn:perturbed-circle}, with initial conditions shown as black curves. The results indicate that the boundaries approach a circle of radius $r_s$, as predicted by the conservation of area. Furthermore, we examine the maximum and minimum distances from the boundary points to the center, defined as $d_i = \|\mathbf x_i-\mathbf x_*\|$, $i=1,\cdots,N$ with $\mathbf x_*=\frac{\int_\Gamma \mathbf x dS_{\mathbf x}}{\int_\Gamma dS_{\mathbf x}}$ being the center of $\Omega(t)$, and compare them with $r_s$ in Figure \ref{fig:perturbed-circle} (b). This quantitatively demonstrates that the radii of the estimated curves converge to $r_s$. 

\begin{figure}[htbp]
\center
%\subfigure[Evolution dynamics for $D_1=0.1$, $D_2=3$.]{\includegraphics[width=0.45\textwidth]{./figures/0.1-3/dynamics.png}}
%\subfigure[Radius evolution for $D_1=0.1$, $D_2=3$ compared with steady-state value.]{\includegraphics[width=0.45\textwidth]{./figures/0.1-3/radius.png}}
%\subfigure[Evolution dynamics for $D_1=0.1$, $D_2=5$.]{\includegraphics[width=0.45\textwidth]{./figures/0.1-5/dynamics.png}}
%\subfigure[Radius evolution for $D_1=0.1$, $D_2=5$ compared with steady-state value.]{\includegraphics[width=0.45\textwidth]{./figures/0.1-5/radius.png}}
\subfigure[Evolution dynamics for $D_1=0.3$, $D_2=5$.]{\includegraphics[width=0.45\textwidth]{./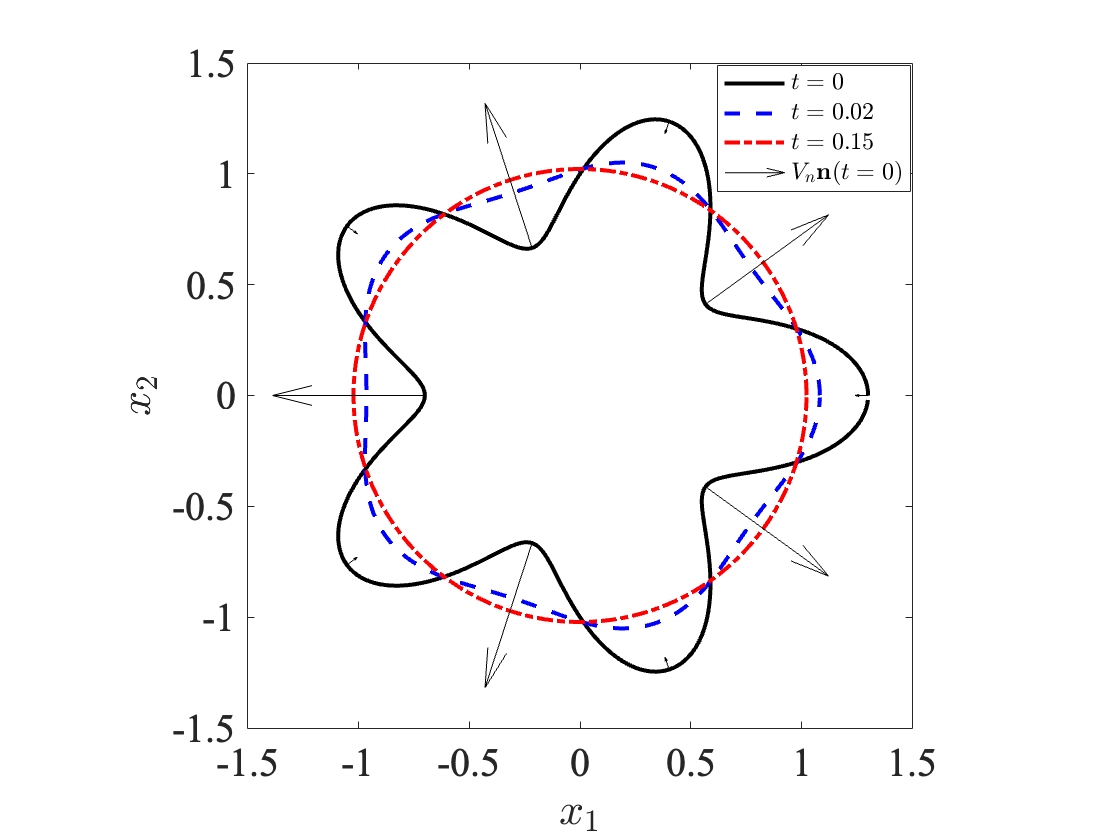}}
\subfigure[Radius evolution for $D_1=0.3$, $D_2=5$ compared with steady-state value.]{\includegraphics[width=0.45\textwidth]{./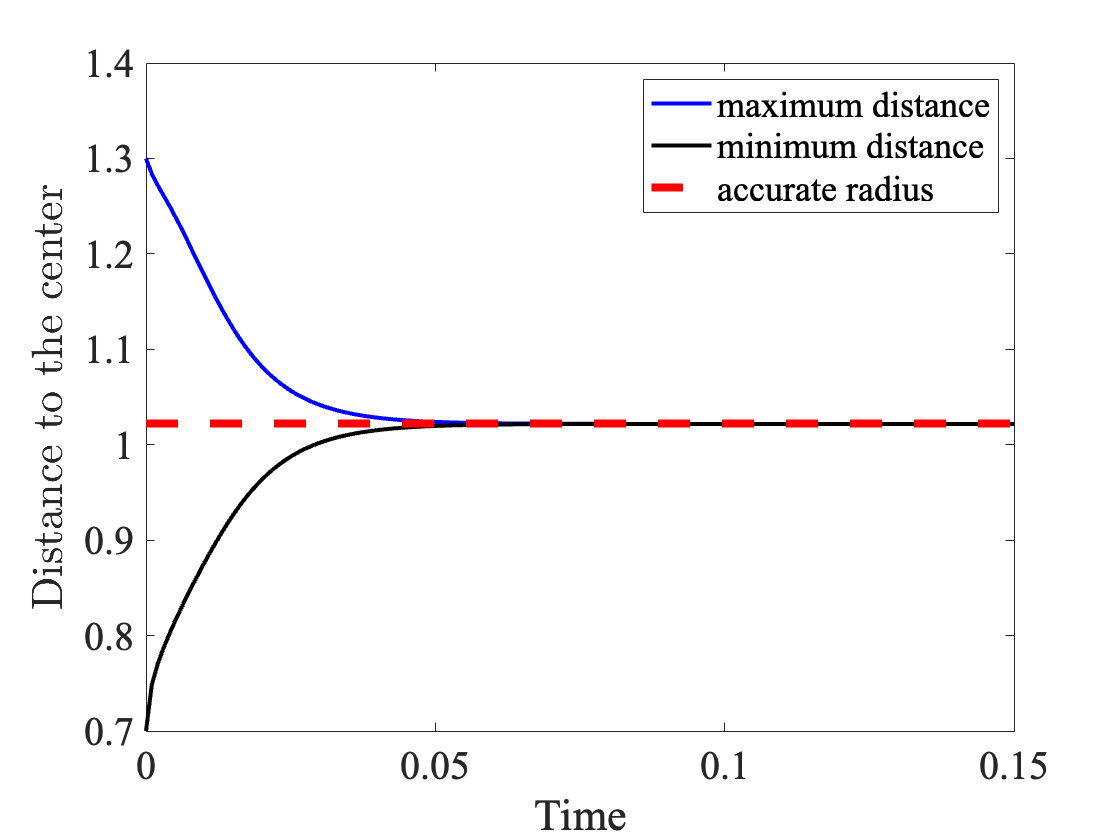}}
\caption{The evolution dynamics for perturbed circles with different $D_1$ and $D_2$. In (a),  the profiles of $\mathbf x(t)$ at different times and boundary motion $ V_n \mathbf n$ at $t=0$ are shown. In (b), comparisons of the maximum and minimum distances from the boundary points to the center with the steady-state circle radius $r_s$ are shown, where we observe that the perturbed circle evolves into the circle with radius $r_s$. }
\label{fig:perturbed-circle}
\end{figure}

\subsection{Free boundary equation on smooth closed curves}
In this section, we apply our method to smooth closed curves, including a heart-shaped curve and a roughly humanoid-shaped curve. Specifically, we use the trapezoidal rule to approximate $A$, Simpson's rule to approximate $b$, and the forward Euler method with $\Delta t=10^{-5}$, $N=400$, and redistribute mesh points onto an equispaced mesh using cubic spline interpolation to prevent point collapse during boundary evolution.

First, we consider the heart-shaped curve, whose initial condition is given as
\begin{equation*}\label{eq:heart}
\mathbf x(0)=\Big(x_1(0),x_2(0)\Big)=\Big(\sin(\theta),1.5\cos(\theta)-0.4\cos(2\theta)-0.1\cos(3\theta)-0.1\cos(4\theta)\Big),\quad \theta\in[0,2\pi].
\end{equation*}
Furthermore, we study the evolution dynamics of a roughly humanoid-shaped curve. The curve is initially defined by 25 points, which are interpolated to generate 400 points that serve as its initial configuration. The results are summarized in Figure \ref{fig:smooth-closed-curve}. Figure \ref{fig:smooth-closed-curve} (a) and (c) present the boundary evolution dynamics for different initial conditions shown as black curves. The results indicate that the boundaries approach a circle, which is quantitatively demonstrated in Figure \ref{fig:smooth-closed-curve} (b) and (d) by comparing the maximum and minimum distances from the boundary points to the center.

\begin{figure}[htbp]
\center
\subfigure[Evolution dynamics for heart-shaped curve.]{\includegraphics[width=0.45\textwidth]{./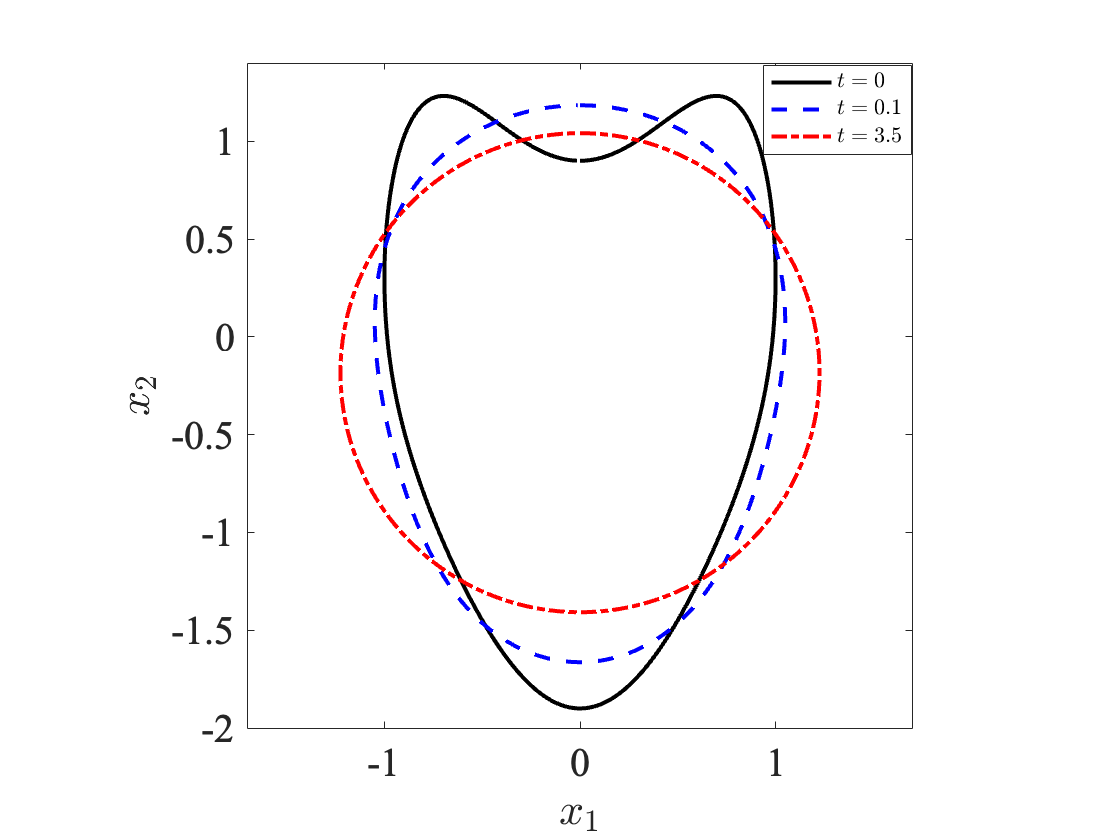}}
\subfigure[Radius evolution for heart-shaped curve.]{\includegraphics[width=0.45\textwidth]{./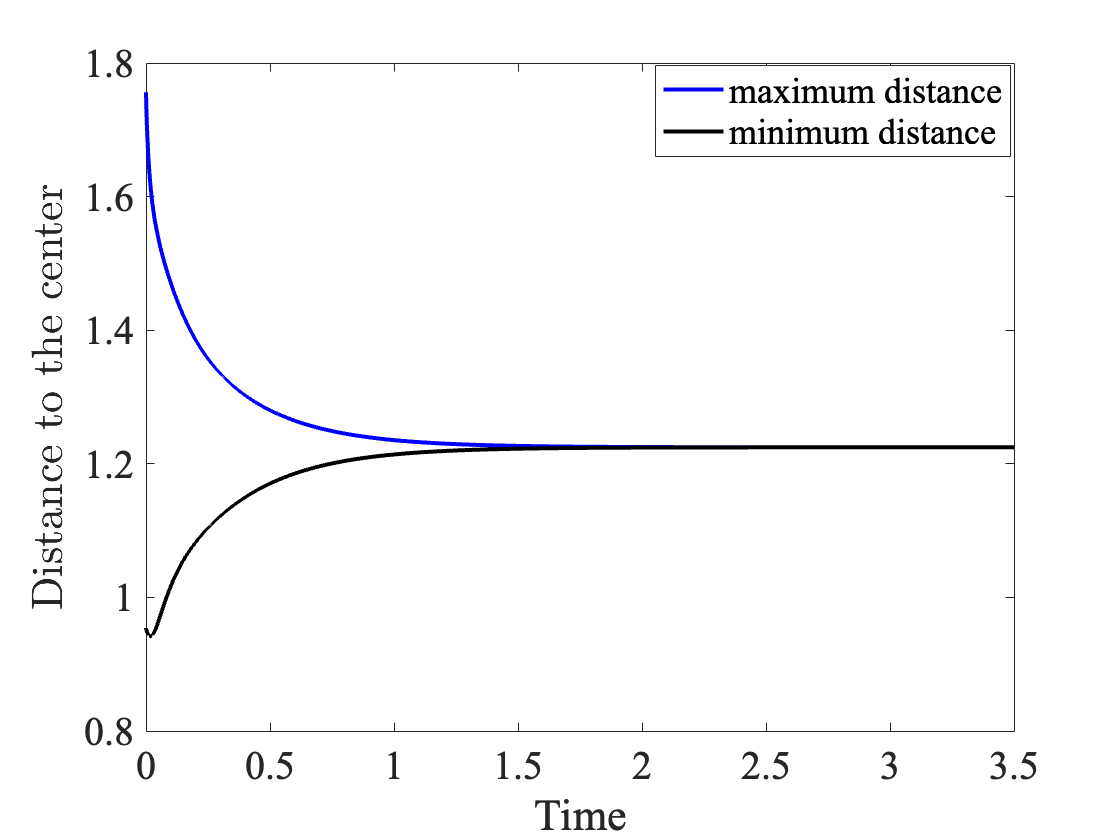}}
\subfigure[Evolution dynamics for roughly humanoid-shaped curve.]{\includegraphics[width=0.45\textwidth]{./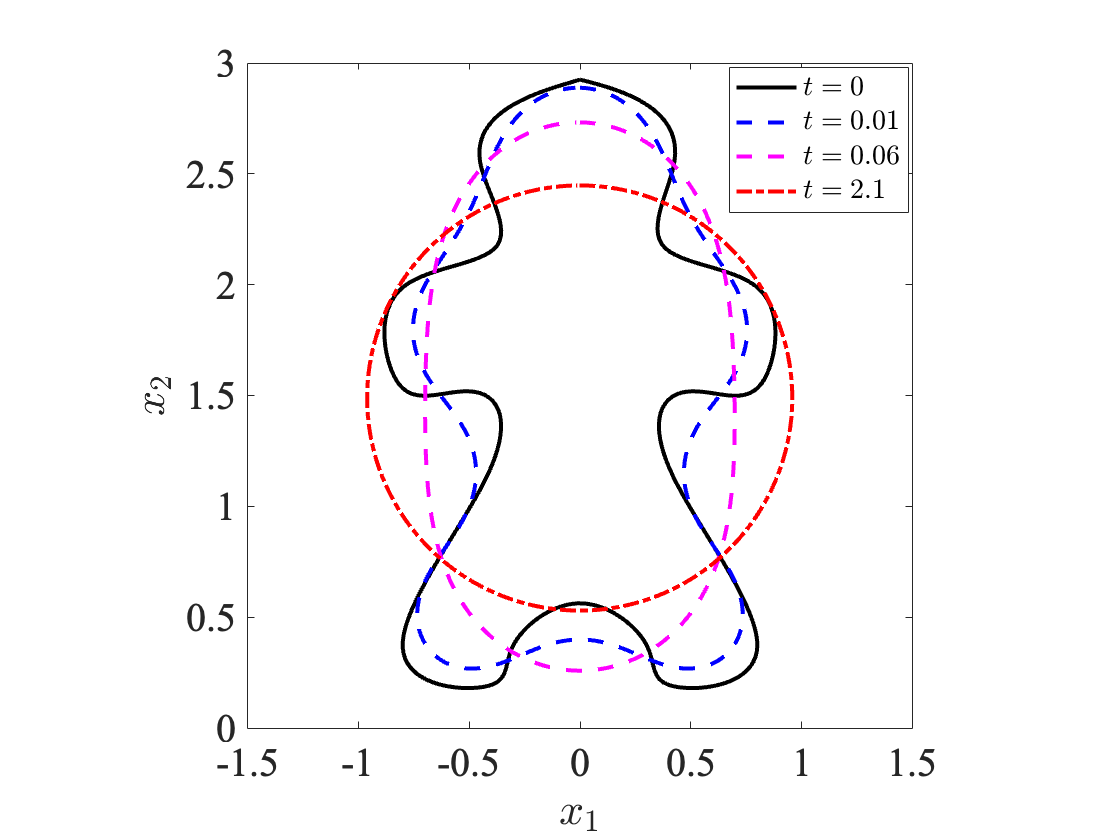}}
\subfigure[Radius evolution for roughly humanoid-shaped curve.]{\includegraphics[width=0.45\textwidth]{./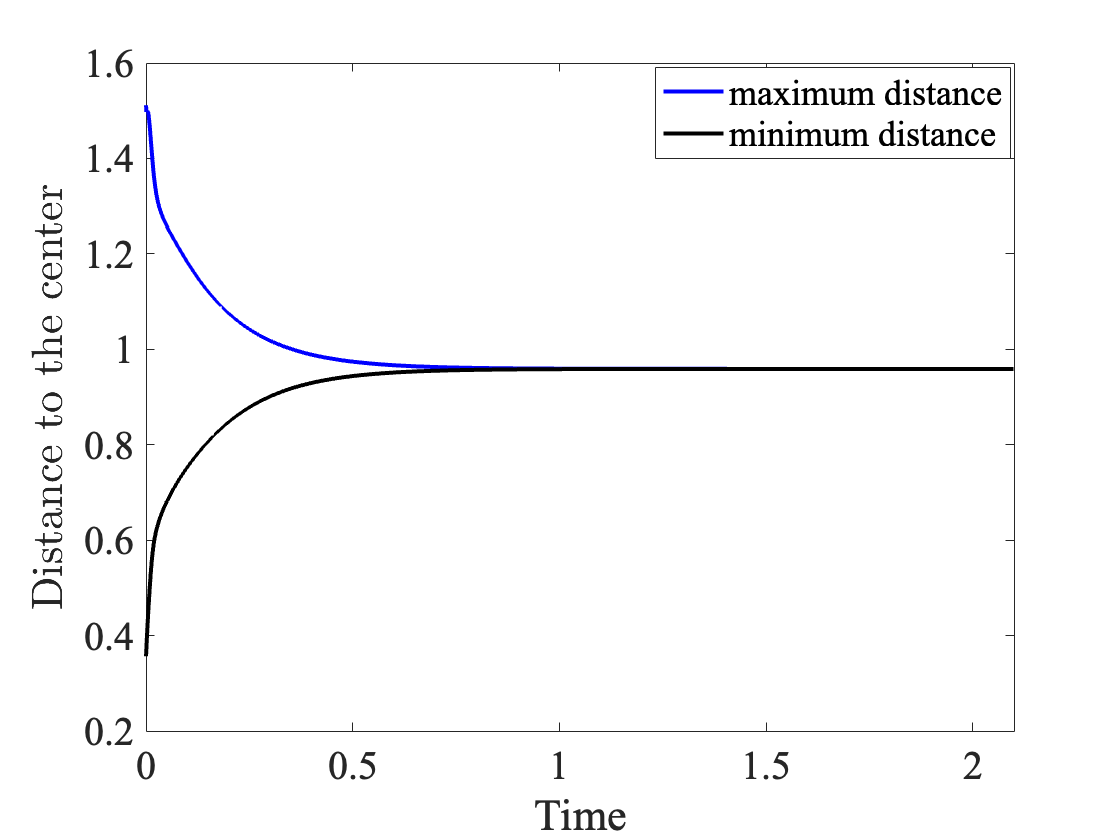}}
\caption{The evolution dynamics for smooth closed curves. In (a) and (c), the profiles of $\mathbf x(t)$ at different times are shown. In (b) and (d), the maximum and minimum distances from the boundary points to the center are shown, where we observe that the smooth closed curves evolve into circles. }
\label{fig:smooth-closed-curve}
\end{figure}

\section{Conclusion}\label{sec6}

In this paper, we have introduced a novel meshfree computational framework for solving the two-dimensional Hele-Shaw problem with surface tension. The core of our approach is a geometric local parameterization of the moving boundary via GMLS, enabling accurate and robust treatment of complex, evolving geometries represented solely by point clouds.

Our main contributions are as follows
1) \textbf{GMLS-based Discretization:} We developed a numerical scheme that systematically discretizes the singular boundary integral formulation of the Hele-Shaw problem. By constructing local charts via GMLS, we accurately approximate geometric quantities such as normal vectors and curvature, and handle the singular integrals arising from the Green's function 
by analytically isolating the logarithmic singularity and applying local quadratic interpolation to the rest to maintain second-order accuracy.
    2) \textbf{Theoretical Analysis:} We provided a rigorous theoretical foundation for the proposed method. This includes a detailed error analysis demonstrating the spatial consistency of our discretization for the boundary integral operators and establishing the invertibility of the resulting linear system under appropriate conditions. Our analysis reveals that the convergence rate is determined by the smoothness of the underlying boundary and the order of the quadrature rules that are used in approximating the boundary integral equation.
    3) \textbf{Numerical Verification:} We validated the effectiveness and accuracy of our method through a series of numerical experiments. Convergence studies on a circle confirmed the high-order spatial accuracy and the expected temporal convergence of the time discretization schemes. Furthermore, simulations on perturbed circles and more complex smooth curves (e.g., heart-shaped and humanoid-shaped boundaries) demonstrated the method's robustness and its ability to capture the correct long-time dynamics, where the evolving boundaries, besides circles, as predicted by area conservation and surface tension effects.

The proposed method offers a powerful and flexible alternative to traditional parameterized boundary integral methods, particularly for problems where maintaining a global parameterization is difficult or where complex topological changes may occur in future extensions. Several directions for future research follow from this work. A natural extension is to handle problems with a source term, namely, \( f(\mathbf{x}) \neq 0 \), which arise in biological applications like tumor growth. Developing adaptive mesh refinement strategies would also enhance efficiency for long-time simulations or boundaries with high-curvature regions. Finally, the most significant challenge and opportunity is the generalization to three-dimensional Hele-Shaw problems and other free-boundary problems, where the advantages of avoiding global parameterization would be particularly impactful.

\section*{Acknowledgment}

The research of JH was partially supported under the NSF grants DMS-2209535 and DMS-2505605, and the ONR grant N00014-22-1-2193.  WH was supported by the National Institute of General Medical Sciences through grant 1R35GM146894. ZZ was partially supported under the ICDS seed grant.

\appendix

\section{Component of A}\label{SM1}

In this appendix, we report components of the matrix $\mathbf{A}$ based on the discretization proposed in Section~\ref{quadrature_rule}.

Specifically, components of $\mathbf{A}$ are:

\begin{equation*}\footnotesize
    \begin{aligned}
        \mathbf A_{i,i}=&-\frac{1}{2\pi}\Big(-\Delta s_{-i}\ln(-\Delta s_{-i})+\Delta s_{-i}+\Delta s_i\ln(\Delta s_i)-\Delta s_i\Big)\\
        &-\frac{1}{2\pi}\left(\frac{\Delta s_i + \Delta s_{-i}}{\Delta s_i \Delta s_{-i}}\right)\Big(-\frac{1}{2}(\Delta s_{-i})^2\ln(-\Delta s_{-i})+\frac{1}{4}(\Delta s_{-i})^2+\frac{1}{2}(\Delta s_i)^2\ln(\Delta s_i)-\frac{1}{4}(\Delta s_i)^2\Big)\\
        &-\frac{1}{2\pi}\left(\frac{1}{\Delta s_i \Delta s_{-i}}\right)\Big(-\frac{1}{3}(\Delta s_{-i})^3\ln(-\Delta s_{-i})+\frac{1}{9}(\Delta s_{-i})^3+\frac{1}{3}(\Delta s_i)^3\ln(\Delta s_i)-\frac{1}{9}(\Delta s_i)^3\Big),\\
        \mathbf A_{i,i-1}=&\frac{1}{2\pi}\left(\frac{\Delta s_i \sqrt{1 + \big(p_i'(\Delta s_{-i})\big)^2}}{\Delta s_{-i} (\Delta s_{-i} - \Delta s_i)} \right)\Big(-\frac{1}{2}(\Delta s_{-i})^2\ln(-\Delta s_{-i})+\frac{1}{4}(\Delta s_{-i})^2+\frac{1}{2}(\Delta s_i)^2\ln(\Delta s_i)-\frac{1}{4}(\Delta s_i)^2\Big)\\
        &\frac{1}{2\pi}\left(\frac{\sqrt{1 + \big(p_i'(\Delta s_{-i})\big)^2}}{\Delta s_{-i} (\Delta s_i - \Delta s_{-i})}\right)\Big(-\frac{1}{3}(\Delta s_{-i})^3\ln(-\Delta s_{-i})+\frac{1}{9}(\Delta s_{-i})^3+\frac{1}{3}(\Delta s_i)^3\ln(\Delta s_i)-\frac{1}{9}(\Delta s_i)^3\Big)\\
        &\frac{1}{4\pi}\ln\left(1+\frac{p_i^2(\Delta s_{-i})}{(\Delta s_{-i})^2}\right)\sqrt{1+\big(p_i'(\Delta s_{-i})\big)^2}\frac{\Delta s_{-i}}{2}-G(\mathbf x_i,\mathbf x_{i-1})\sqrt{1+\big(p_{i-2}'(\Delta s_{i-2})\big)^2}\frac{\Delta s_{i-2}}{2},\\
        \mathbf A_{i,i+1}=&-\frac{1}{2\pi}\left(\frac{\Delta s_{-i} \sqrt{1 + \big(p_i'(\Delta s_{i})\big)^2}}{\Delta s_{i} (\Delta s_{-i} - \Delta s_i)} \right)\Big(-\frac{1}{2}(\Delta s_{-i})^2\ln(-\Delta s_{-i})+\frac{1}{4}(\Delta s_{-i})^2+
        \frac{1}{2}(\Delta s_i)^2\ln(\Delta s_i)-\frac{1}{4}(\Delta s_i)^2\Big)\\
        &-\frac{1}{2\pi}\left(\frac{\sqrt{1 + \big(p_i'(\Delta s_{i})\big)^2}}{\Delta s_{i} (\Delta s_i - \Delta s_{-i})}\right)\Big(-\frac{1}{3}(\Delta s_{-i})^3\ln(-\Delta s_{-i})+\frac{1}{9}(\Delta s_{-i})^3+\frac{1}{3}(\Delta s_i)^3\ln(\Delta s_i)-\frac{1}{9}(\Delta s_i)^3\Big)\\
        &-\frac{1}{4\pi}\ln\left(1+\frac{p_i^2(\Delta s_{i})}{(\Delta s_{i})^2}\right)\sqrt{1+\big(p_i'(\Delta s_{i})\big)^2}\frac{\Delta s_{i}}{2}-G(\mathbf x_i,\mathbf x_{i+1})\frac{\Delta s_{i+1}}{2},\\
        \mathbf A_{i,j}=&G(\mathbf x_i,\mathbf x_j)\frac{\Delta s_j}{2} -+G(\mathbf x_i,\mathbf x_j)\sqrt{1+\big(p'_{j-1}(\Delta s_{j-1})\big)^2}\frac{\Delta s_{j-1}}{2},\quad j\neq i-1,i,i+1.
    \end{aligned}
\end{equation*}

\section{Proof of Proposition \ref{prop4.2}}\label{SM2}
In this appendix, we report the additional proof in Proposition~\ref{prop4.2} that bound 
$|I_N^* - \tilde{I}_N|$.

\begin{eqnarray}
|I_N^* - \tilde{I}_N| &\leq& |I_N^* - I_N | + |I_N - \tilde{I}_N| \notag \\ &=&
\frac{1}{4\pi}\left|\int_{\Delta s(\mathbf x_{-i})}^{\Delta s(\mathbf x_i)} \ln\left(1+\frac{p^2(s)}{s^2}\right)\psi^*(s)ds - \int_{\Delta s_{-i}}^{\Delta s_i} \ln\left(1+\frac{p_i^2(s)}{s^2}\right)\psi_i(s)ds\right| \notag\\
&&+\sum_{j\neq i,i-1}\left|\int_0^{\Delta s(\mathbf{x}_j)} G\big(\mathbf x_i,\iota(s)\big)\psi^*(s)ds- \int_0^{\Delta s_j} G\big(\mathbf x_i,\iota_j(s)\big)\psi_j(s)ds\right|+\mathcal{O}\left(\left(\frac{\log N}{N}\right)^{q}\right),\notag
\end{eqnarray}
where $\tilde{I}_N$ is the quadrature rule approximation to $I_N$ with error of order $q$.

By Proposition~\ref{prop3.1} and \eqref{Errorpsi}, we note the following bounds for the integrands. With probability higher than $1-\frac{3}{N}$,
\BEA
\left| \varphi^*(s) - \varphi_i(s)\right|&:=&\left|\ln\left(1+\frac{p^2(s)}{s^2}\right)\psi^*(s) - \ln\left(1+\frac{p_i^2(s)}{s^2}\right)\psi_i(s)\right| \notag \\ 
&\leq& \left|\ln\left(1+\frac{p^2(s)}{s^2}\right)\psi^*(s)-\ln\left(1+\frac{p_i^2(s)}{s^2}\right)\psi^*(s)\right|
+\left| \ln\left(1+\frac{p_i^2(s)}{s^2}\right)\psi^*(s)-\ln\left(1+\frac{p_i^2(s)}{s^2}\right)\psi_i(s)\right|\notag\\
&=& \left|\frac{1}{1+ p^2(s)/s^2}\right| \left|\frac{p^2(s) -p_i^2(s)}{s^2} \right|\left|\psi^*(s)\right|+\mathcal{O}\left(\left(\frac{\log N}{N}\right)^{\ell}\right) 
= \mathcal{O}\left(\frac{\big(\log N\big)^{\ell+1}}{N^{\ell-3}}\right), \notag
\EEA
where we have used a lower bound for the separation distance, $|s|\geq \min_{i\neq j} d_g(\mathbf{x}_i,\mathbf{x}_j) = q_{X,\Gamma} \geq N^{-2}$ with probability higher than $1-\frac{1}{N}$ (see
Lemma A.2 in \cite{yan2023spectral} for the detailed proof).

Similarly, for $j\neq i,i-1$, with probability higher than $1-\frac{3}{N}$,
\BEA
\left|G\big(\mathbf x_i,\iota(s)\big)\psi^*(s) - G\big(\mathbf x_i,\iota_j(s)\big)\psi_j(s)\right|&\leq&\left|G\big(\mathbf x_i,\iota(s)\big)-G\big(\mathbf x_i,\iota_j(s)\big)\right|\left|\psi^*(s)\right|  + \left|G\big(\mathbf x_i,\iota_j(s)\big)\right|\left|\psi^*(s)-\psi_j(s)\right| \notag\\
&=& \frac{1}{2\pi}\frac{\left|\big(\mathbf{x}_i -\iota(s)\big)\cdot\big(\iota(s) - \iota_j(s)\big)\right|}{\|\mathbf{x}_i -\iota(s)\|^2}\left|\psi^*(s)\right| + \mathcal{O}\left(\left(\frac{\log N}{N}\right)^{\ell}\right) \notag\\
&=& \mathcal{O}\left(\frac{\big(\log N\big)^{\ell+1}}{N^{\ell-1}}\right).\notag
\EEA

Then, one can deduce that with probability higher than $1-\frac{8}{N}$,
\BEA
\left|\int_{\Delta s(\mathbf x_{-i})}^{\Delta s(\mathbf x_i)} \varphi^*(s)ds -\int_{\Delta s_{-i}}^{\Delta s_i} \varphi_i(s)ds \right| &\leq& \left| \int_{\Delta s(\mathbf x_{-i})}^{\Delta s(\mathbf x_i)} \varphi^*(s)ds-\int_{\Delta s_{-i}}^{\Delta s_i}\varphi^*(s)ds\right|  + \left| \int_{\Delta s_{-i}}^{\Delta s_i} \big(\varphi^*(s)ds - \varphi_i(s)\big)ds\right| \notag \\
&\leq&C \big(|\Delta s_i - \Delta s(\mathbf{x}_i)| + |\Delta s_{-i} - \Delta s(\mathbf{x}_{-i})|\big) + \max_{s\in [\Delta s_{-i},\Delta s_{i}]}\left|\varphi^*(s)-\varphi_i(s)\right| \left|\Delta s_i-\Delta s_{-i}\right|\notag\\
&=&\mathcal{O}\left(\left(\frac{\log N}{N}\right)^{\ell+1}\right) + \mathcal{O}\left(\frac{\big(\log N\big)^{\ell+2}}{N^{\ell-2}}\right),\notag
\EEA
and with probability higher than $1-\frac{6}{N}$,
\BEA
\left|\int_0^{\Delta s(\mathbf{x}_j)} G\big(\mathbf x_i,\iota(s)\big)\psi^*(s)ds- \int_0^{\Delta s_j} G\big(\mathbf x_i,\iota_j(s)\big)\psi_j(s)ds\right| &=& \mathcal{O}\left(\left(\frac{\log N}{N}\right)^{\ell+1}\right) \notag + \mathcal{O}\left(\frac{\big(\log N\big)^{\ell+2}}{N^{\ell}}\right).\notag
\EEA
Therefore, with probability higher than $1-\frac{14}{N}$,
\BEA
|I_N^*-I_N| &\leq& \left|\int_{\Delta s(\mathbf x_{-i})}^{\Delta s(\mathbf x_i)} \varphi^*(s)ds -\int_{\Delta s_{-i}}^{\Delta s_i} \varphi_i(s)ds \right| \notag+ \sum_{j\neq i,i-1} \left|\int_0^{\Delta s(\mathbf{x}_j)} G\big(\mathbf x_i,\iota(s)\big)\psi^*(s)ds- \int_0^{\Delta s_j} G\big(\mathbf x_i,\iota_j(s)\big)\psi_j(s)ds\right|\notag \\
&=& \mathcal{O}\left(\frac{\big(\log N\big)^{\ell+2}}{N^{\ell-2}}\right) + \big(N-2\big) \mathcal{O}\left(\frac{\big(\log N\big)^{\ell+2}}{N^{\ell}}\right) 
= \mathcal{O}\left(\frac{\big(\log N\big)^{\ell+2}}{N^{\ell-2}}\right). \notag
\EEA
Finally, we have
\BEA
|I_N^*-\tilde{I}_N| \leq |I_N^*-I_N|+ |I_N-\tilde{I}_N| = \mathcal{O}\left(\frac{\big(\log N\big)^{\ell+2}}{N^{\ell-2}}\right) + \mathcal{O}\left(\left(\frac{\log N}{N}\right)^{q}\right)\label{ErrorIn}.
\EEA
With the bounds in \eqref{ErrorIs} and \eqref{ErrorIn}, the proof is completed.

\section{Proof of Proposition~\ref{prop4.3}}\label{SM3}

For simplicity, we define
\[
b(\mathbf{x}_i)= -\frac{\kappa(\mathbf x_i)}{2} + \frac{1}{2}I_{b}^*, 
\]
where
\[I^*_b=-\sum_{j}\int_{\Delta s(\mathbf x_{-j})}^{\Delta s(\mathbf x_j)} \kappa\big(\iota(s)\big)\Big(\nabla G\big(\mathbf x_i,\iota(s)\big)\cdot \mathbf{n}\big(\iota(s)
\big)\Big)\sqrt{|\iota'(s)|}ds.
\]
     By Proposition~\ref{prop3.1} and Proposition~\ref{prop4.1}, with probability higher than $1-\frac{1}{N}$,
     \begin{equation}\label{Errorkappa}
     \left|\kappa(\mathbf x_i)-\kappa_i\right|=\mathcal{O}\left(\left(\frac{\log N}{N}\right)^{\ell-1}\right).
     \end{equation}
Then we consider the error bond for $\left|I^*_b-\tilde{I}_b\right|$. Note that
\begin{eqnarray}
        \left|I^*_b-\tilde{I}_b\right|&\leq& \left|I^*_b-I_b\right|+\left|I_b-\tilde{I}_b\right|\notag\\
    &=&\sum_{j}\left|\int_{\Delta s(\mathbf x_{-j})}^{\Delta s(\mathbf x_j)} \Big(\nabla G\big(\mathbf x_i,\iota(s)\big)\cdot \mathbf{n}\big(\iota(s)
\big)\Big)\phi^*(s)ds-\int_{\Delta s_{-j}}^{\Delta s_j} \Big(\nabla G\big(\mathbf x_i,\iota_j(s)\big)\cdot \hat{\mathbf{n}}\big(\iota_j(s)
\big)\Big)\phi_j(s)ds\right|+\mathcal{O}\left(\left(\frac{\log N}{N}\right)^{q}\right),\notag
\end{eqnarray}
where $\tilde{I}_b$ is the quadrature rule approximation to $I_b$ with error of order-$q$, and $\phi^*(s):=\kappa\big(\iota(s)\big)\sqrt{|\iota'(s)|}$, $\phi_j(s):=\kappa\big(\iota_j(s)\big)\sqrt{1+\big(p_j'(s)\big)^2}$.

By Proposition~\ref{prop3.1}, with probability higher than $1-\frac{2}{N}$.
\begin{eqnarray}
    \left|\phi^*(s)-\phi_j(s)\right|&=&\left|\kappa\big(\iota(s)\big)\sqrt{|\iota'(s)|}-\kappa\big(\iota_j(s)\big)\sqrt{1+\big(p_j'(s)\big)^2}\right|\notag\\
    &\leq& \left|\kappa\big(\iota(s)\big)\right|\left|\sqrt{|\iota'(s)|}-\sqrt{1+\big(p'_i(s)\big)^2}\right|+\sqrt{1+\big(p'_i(s)\big)^2}\left|\kappa\big(\iota(s)\big)-\kappa_i(s)\right|\notag\\
&=&\mathcal{O}\left(\left(\frac{\log N}{N}\right)^{\ell}\right) + \mathcal{O}\left(\left(\frac{\log N}{N}\right)^{\ell-1}\right)=\mathcal{O}\left(\left(\frac{\log N}{N}\right)^{\ell-1}\right).\notag
\end{eqnarray}

Then, with probability higher than $1-\frac{4}{N}$,
\begin{eqnarray}
   &&\left|\nabla G\big(\mathbf x_i,\iota(s)\big)\cdot\mathbf{n}\big(\iota(s)\big)\phi^*(s)-\nabla G\big(\mathbf x_i,\iota_j(s)\big)\cdot\hat{\mathbf n}\big(\iota_j(s)\big)\phi_j(s)\right|\notag\\
    &&\leq\left|\nabla G\big(\mathbf x_i,\iota(s)\big)\cdot\mathbf{n}\big(\iota(s)\big)-\nabla G\big(\mathbf x_i,\iota_j(s)\big)\cdot\hat{\mathbf n}\big(\iota_j(s)\big)\right|\left|\phi^*(s)\right|+\left|\nabla G\big(\mathbf x_i,\iota_j(s)\big)\cdot\hat{\mathbf n}\big(\iota_j(s)\big)\right|\left|\phi^*(s)-\phi_j(s)\right|\notag\\
    &&\leq\left|\nabla G\big(\mathbf x_i,\iota(s)\big)\cdot\left(\mathbf {n}\big(\iota(s)\big)-\hat{\mathbf n}\big(\iota_j(s)\big)\right)\right| \left|\phi^*(s)\right| +\left|\left(\nabla G\big(\mathbf x_i,\iota(s)\big)-\nabla G\big(\mathbf x_i,\iota_j(s)\big)\right)\cdot\hat{\mathbf n}\big(\iota_j(s)\big)\right|\left|\phi^*(s)\right|+\mathcal{O}\left(\left(\frac{\log N}{N}\right)^{\ell-1}\right)\notag\\
    &&=C\frac{\left|\big(\mathbf{x}_i -\iota(s)\big)\cdot\big(\mathbf n\big(\iota(s)\big) - \hat{\mathbf n}\big(\iota_j(s)\big)\big)\right|}{\|\mathbf{x}_i -\iota(s)\|^2}+C\left|\frac{-\|\mathbf x_i-\iota(s)\|^2\mathbf {1}_2+2\big(\mathbf x_i-\iota(s)\big)\odot\big(\mathbf x_i-\iota(s)\big)}{\|\mathbf x_i-\iota(s)\|^4}\odot\big(\iota(s)-\iota_j(s)\big)\cdot\hat{\mathbf n}\big(\iota_j(s)\big)\right|\notag\\
    &&\quad+\mathcal{O}\left(\left(\frac{\log N}{N}\right)^{\ell-1}\right)\notag\\
&&=\mathcal{O}\left(\frac{\big(\log N\big)^{\ell}}{N^{\ell-2}}\right)+\mathcal{O}\left(\frac{\big(\log N\big)^{\ell+1}}{N^{\ell-3}}\right)+\mathcal{O}\left(\left(\frac{\log N}{N}\right)^{\ell-1}\right)=\mathcal{O}\left(\frac{\big(\log N\big)^{\ell+1}}{N^{\ell-3}}\right)\notag
\end{eqnarray}
where $\mathbf{1}_2 = \begin{bmatrix} 1,1\end{bmatrix}^\top$ and $\odot$ represents the Hadamard product.

Therefore, with probability higher than $1-\frac{9}{N}$,
\begin{eqnarray}
    \left| I^*_b-I_b\right|&\leq&\sum_{j}\left|\int_{\Delta s(\mathbf x_{-j})}^{\Delta s(\mathbf x_j)} \Big(\nabla G\big(\mathbf x_i,\iota(s)\big)\cdot \mathbf{n}\big(\iota(s)
\big)\Big)\phi^*(s)ds-\int_{\Delta s_{-j}}^{\Delta s_j} \Big(\nabla G\big(\mathbf x_i,\iota(s)\big)\cdot \mathbf{n}\big(\iota(s)
\big)\Big)\phi^*(s)ds\right|\notag\\
&&+\sum_j\left|\int_{\Delta s_{-j}}^{\Delta s_j} \Big(\nabla G\big(\mathbf x_i,\iota(s)\big)\cdot \mathbf{n}\big(\iota(s)
\big)\Big)\phi^*(s)ds-\int_{\Delta s_{-j}}^{\Delta s_j} \Big(\nabla G\big(\mathbf x_i,\iota_j(s)\big)\cdot \hat{\mathbf{n}}\big(\iota_j(s)
\big)\Big)\phi_j(s)ds\right|\notag\\
&=&C \big(|\Delta s_i - \Delta s(\mathbf{x}_i)|+ |\Delta s_{-i} - \Delta s(\mathbf{x}_{-i})|\big)\notag\\
&&+ \max_{s\in [\Delta s_{-i},\Delta s_{i}]}\left|\nabla G\big(\mathbf x_i,\iota(s)\big)\cdot\mathbf{n}\big(\iota(s)\big)\phi^*(s)-\nabla G\big(\mathbf x_i,\iota_j(s)\big)\cdot\hat{\mathbf n}\big(\iota_j(s)\big)\phi_j(s)\right| \left|\Delta s_i-\Delta s_{-i}\right|\notag\\
&=&\mathcal{O}\left(\left(\frac{\log N}{N}\right)^{\ell+1}\right)+\mathcal{O}\left(\frac{\big(\log N\big)^{\ell+2}}{N^{\ell-2}}\right)=\mathcal{O}\left(\frac{\big(\log N\big)^{\ell+2}}{N^{\ell-2}}\right).\notag
\end{eqnarray}

Finally, we have
\BEA
|I_b^*-\tilde{I}_b| \leq |I_b^*-I_b|+ |I_b-\tilde{I}_b| = \mathcal{O}\left(\frac{\big(\log N\big)^{\ell+2}}{N^{\ell-2}}\right) + \mathcal{O}\left(\left(\frac{\log N}{N}\right)^{q}\right)\label{ErrorIb}.
\EEA
With the bounds in \eqref{Errorkappa} and \eqref{ErrorIb}, the proof is completed.

\section[Picture]{Additional Numerical experiments for Section \ref{circle}}\label{SM4}

We present additional simulations of the free boundary equation \eqref{BIM} using the perturbed circular initial condition \eqref{eqn:perturbed-circle} for different values of $(D_1, D_2)$. All numerical settings follow those in Section~\ref{circle}. Figure~\ref{fig:perturbed-circle2} illustrates the boundary evolution for several representative parameter choices. 
Unlike the results shown in Figure~\ref{fig:perturbed-circle}, these results are obtained without employing the cubic spline interpolation that redistributes the mesh points onto an equispaced mesh since these points do not collapse during the boundary evolution.

\begin{figure}[htb!]
\center
\subfigure[Evolution dynamics for $D_1=0.1$, $D_2=3$.]{\includegraphics[width=0.45\textwidth]{./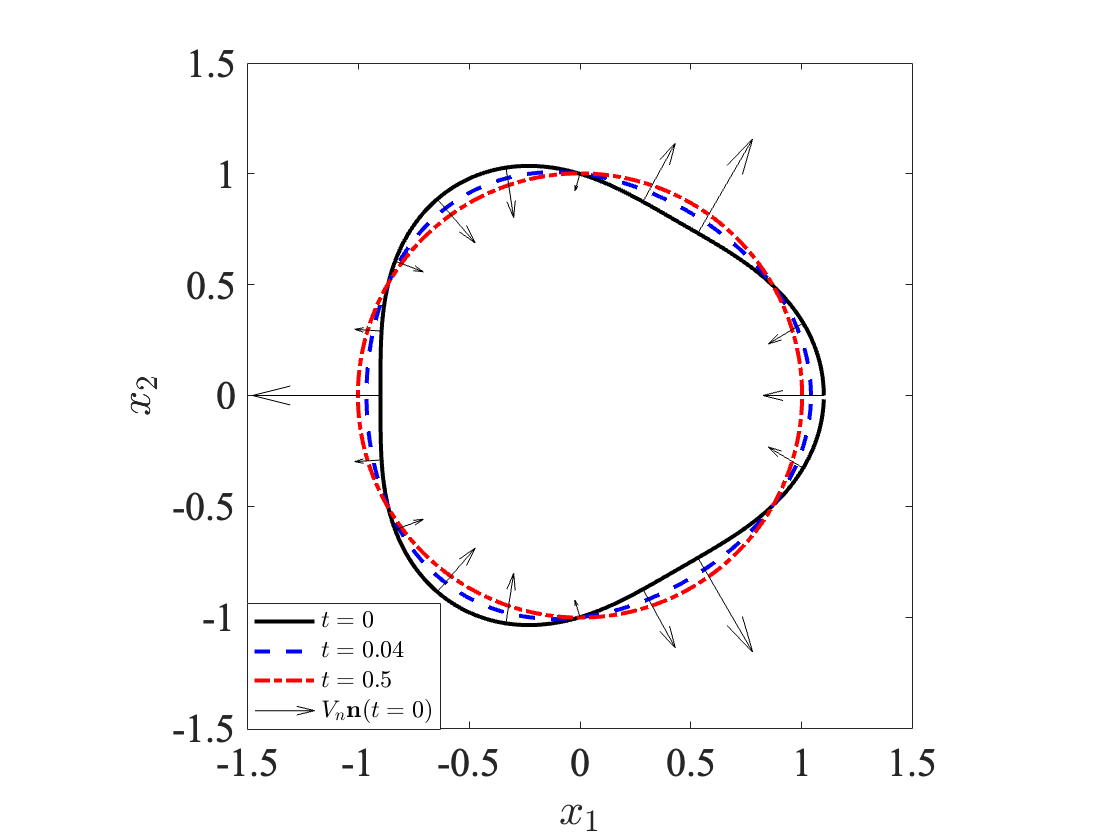}}
\subfigure[Radius evolution for $D_1=0.1$, $D_2=3$ compared with steady-state value.]{\includegraphics[width=0.45\textwidth]{./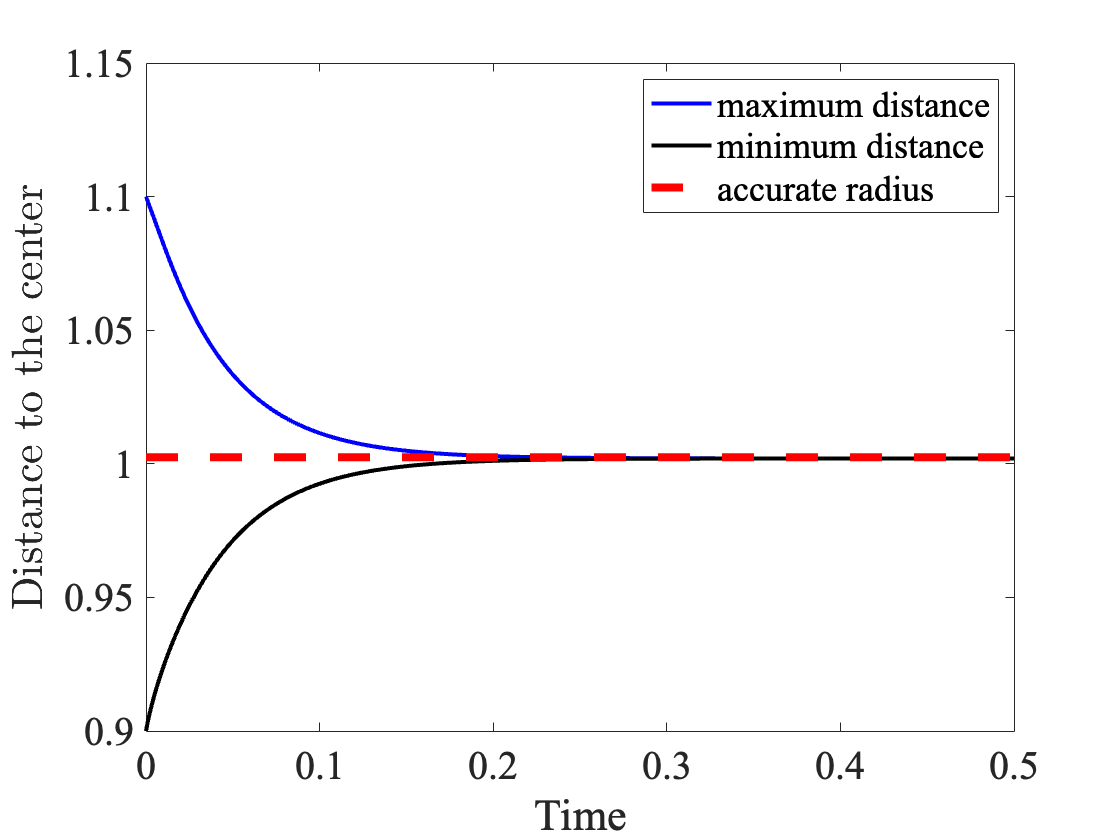}}
\subfigure[Evolution dynamics for $D_1=0.1$, $D_2=5$.]{\includegraphics[width=0.45\textwidth]{./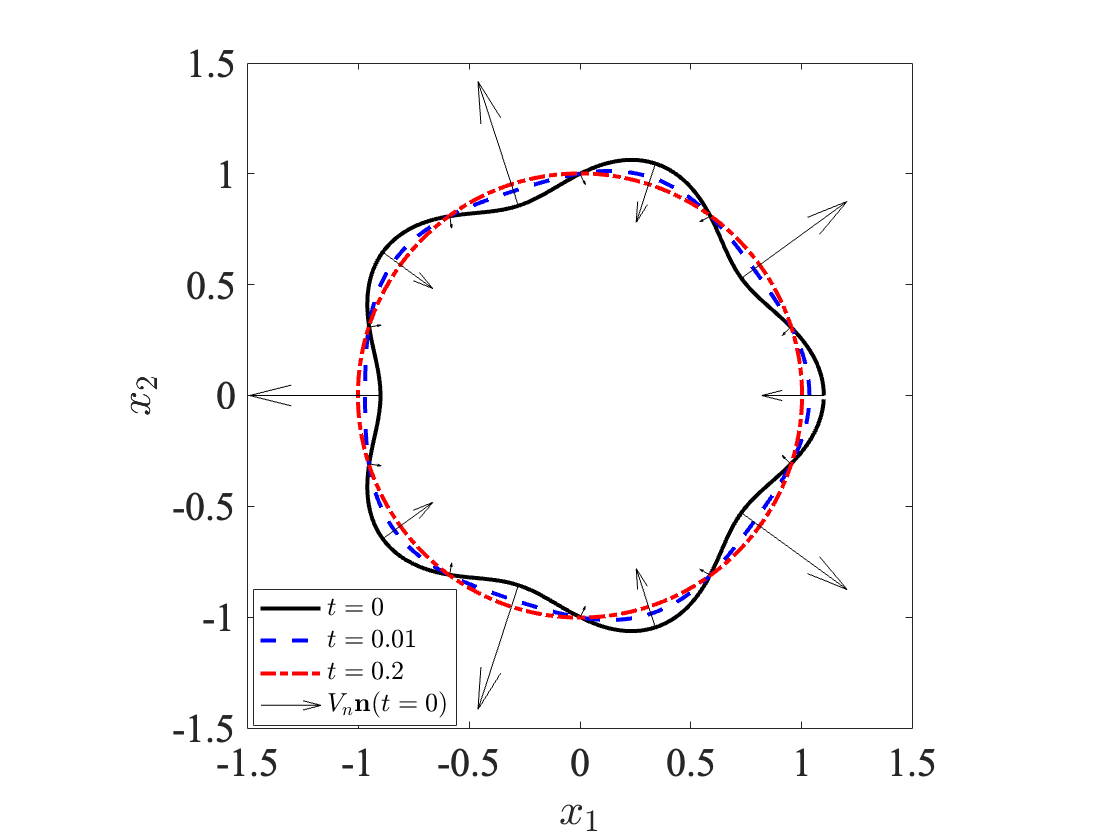}}
\subfigure[Radius evolution for $D_1=0.1$, $D_2=5$ compared with steady-state value.]{\includegraphics[width=0.45\textwidth]{./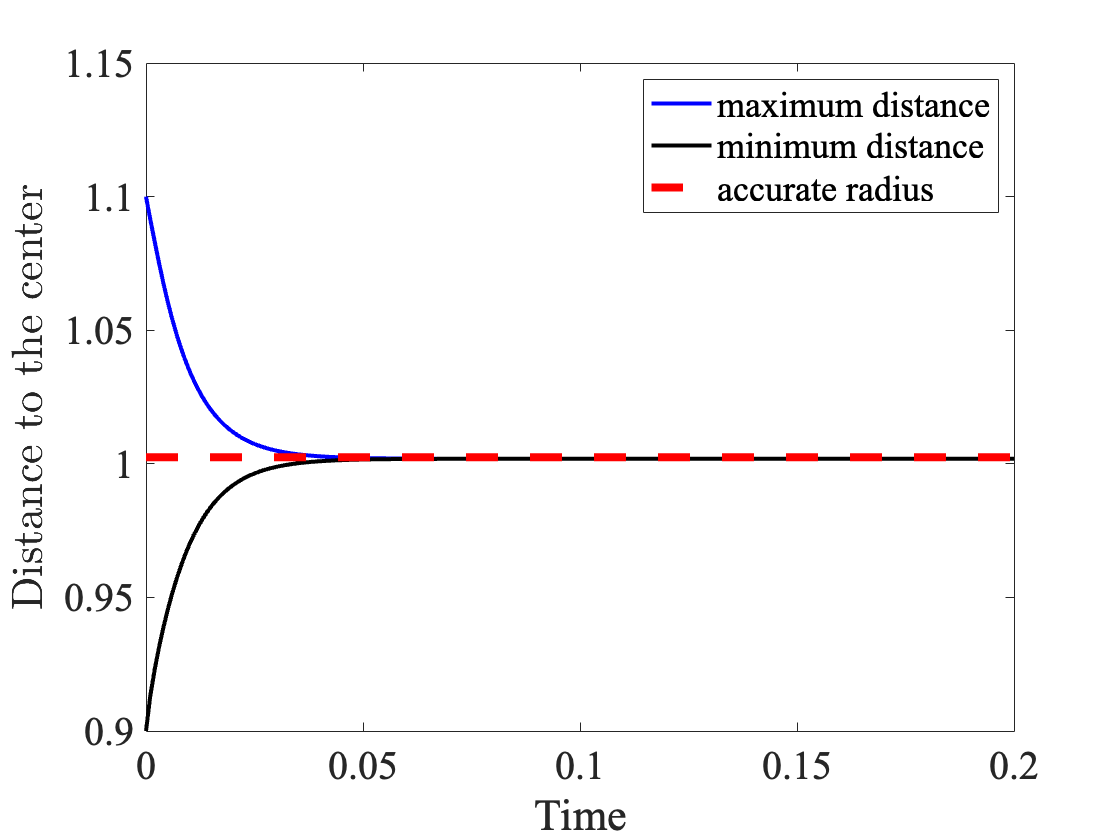}}
\caption{The evolution dynamics for perturbed circles with different $D_1$ and $D_2$. In (a), and (c), the profiles of $\mathbf x(t)$ at different times and boundary motion $ V_n \mathbf n$ at $t=0$ for different $D_1$ and $D_2$ are shown. In (b) and (d), comparisons of the maximum and minimum distances from the boundary points to the center with the steady-state circle radius $r_s$ for different $D_1$ and $D_2$ are shown, where we observe that the perturbed circles with different $D_1$ and $D_2$ evolve into circles with radius $r_s$. }
\label{fig:perturbed-circle2}
\end{figure}

\bibliographystyle{plain}  
%\bibliography{references}

\end{document}